\newcommand{\spa}[1]{\mathrm{Spa}\left(#1\right)}
\newcommand{\spf}[1]{\mathrm{Spf}\left(#1\right)}
\newcommand{\et}{\mathrm{\'{e}t}}
\theoremstyle{plain}
\newtheorem*{thrm}{Theorem}
\newtheorem{theo}{Theorem}[section]
\newtheorem{prop}[theo]{Proposition}
\newtheorem{cor}[theo]{Corollary}
\newtheorem{lem}[theo]{Lemma}
\newtheorem{guess[theo]}{Guess}
\theoremstyle{definition}
\newtheorem{defn}[theo]{Definition}
\theoremstyle{remark}
\newtheorem{rem}[theo]{Remark}
\newcommand{\proofof}[1]{\end{#1}\begin{proof}}
\renewcommand\section{\@startsection {section}{1}{\z@}%
  {-3.5ex \@plus -1ex \@minus -.2ex}{2.3ex \@plus.2ex}%
  {\normalfont\large\bfseries}}
\renewcommand\subsection{\@startsection{subsection}{2}{\z@}%
  {-3.25ex\@plus -1ex \@minus -.2ex}{1.5ex \@plus .2ex}%
  {\normalfont\bfseries}}
\newcommand{\sh}[1]{\mathcal{#1}}
\newcommand{\Z}{{\mathbb Z}}
\newcommand{\Q}{{\mathbb Q}}
\newcommand{\A}{{\mathbb A}}
\newcommand{\D}{{\mathbb D}}
\DeclareMathAlphabet{\mathrmsl}{OT1}{cmr}{m}{sl}
\newcommand{\rssymb}[2]{\newcommand{#1}{\mathrmsl{#2}} }
\newcommand{\oper}[3][n]{\newcommand{#2}{\mathop{\mathrm{#3}}%
\ifx n#1\nolimits\else\limits\fi} }
\newcommand{\rsoper}[3][n]{\newcommand{#2}{\mathop{\mathrmsl{#3}}%
\ifx n#1\nolimits\else\limits\fi} }
\newcommand{\lser}[1]{(\!(#1)\!)}
\newcommand{\pow}[1]{\llbracket #1 \rrbracket}
\newcommand{\spec}[1]{\mathrm{Spec}\left(#1\right)}
\newcommand{\cur}[1]{\mathcal{#1}}
\newcommand{\norm}[1]{\left\vert#1\right\vert}
\newcommand{\isomto}{\overset{\sim}{\rightarrow}}
\newcommand{\bu}{\bullet}
\newcommand{\rig}{\mathrm{rig}}
\newcommand{\ek}{\cur{E}_K}
\newcommand{\ekd}{\cur{E}_K^\dagger}
\newcommand{\tate}[1]{\langle #1 \rangle}
\newcommand{\pn}{(\varphi,\nabla)}
\newcommand{\rk}{\cur{R}_K}
\oper\Ad{Ad}
\oper\val{val}
\oper\coker{coker}
\oper\mult{mult}
\oper\Iso{Iso}
\oper\End{End}
\oper\Aut{Aut}
\oper\Sub{Sub}
\oper\Alt{Alt}
\oper\Ext{Ext}
\oper\Pic {Pic}
\oper\Sym{Sym}
\oper\Spec{Spec}
\oper\Spf{Spf}
\oper\Sp{Sp}
\oper\Spa{Spa}
\oper\Proj{Proj}
\rsoper\divg{div}
\rsoper{\sym}{sym}
\rsoper{\alt}{alt}
\rsoper\trace{tr}
\rssymb\id{id}
\newcommand{\thismonth}{\ifcase\month\or
  January\or February\or March\or April\or May\or June\or
  July\or August\or September\or October\or November\or December\fi
  \space\number\year}
\title{Rigid cohomology over Laurent series fields III: Absolute coefficients and arithmetic applications}
\author{Christopher Lazda and Ambrus P\'{a}l}
\begin{document}

\maketitle 

\abstract{In this paper we investigate the arithmetic aspects of the theory of $\ekd$-valued rigid cohomology introduced and studied in \cite{rclsf1,rclsf2}. In particular we show that these cohomology groups have compatible connections and Frobenius structures, and therefore are naturally $\pn$-modules over $\ekd$ whenever they are finite dimensional. We also introduce a category of `absolute' coefficients for the theory; the same results are true for cohomology groups with coefficients. We moreover prove a $p$-adic version of the weight monodromy conjecture for smooth (not necessarily proper) curves, and use a construction of Marmora to prove a version of $\ell$-independence for smooth curves over $k\lser{t}$ that includes the case $\ell=p$. This states that after tensoring with $\cur{R}_K$, our $p$-adic cohomology groups agree with the $\ell$-adic Galois representations $H^i_\et(X_{k\lser{t}^\mathrm{sep}},\Q_\ell)$ for $\ell\neq p$.}

\tableofcontents

\section*{Introduction}\addcontentsline{toc}{section}{Introduction}

This is the final paper in the series \cite{rclsf1,rclsf2} concerning a new $p$-adic cohomology theory for varieties over Laurent series fields in positive characteristic. A general overview of the whole series is given in the introduction to \cite{rclsf1}, so here we will summarise the results of the previous papers and give a detailed introduction to the results contained here.

Let $k$ be a field of characteristic $p>0$. In the first paper \cite{rclsf1} we introduced a version of rigid cohomology
$$ X\mapsto H^*_\rig(X/\ekd)
$$ for varieties over the Laurent series field $k\lser{t}$ with values in vector spaces over the bounded Robba ring $\ekd$ (here $K$ is a complete discretely valued field of characteristic $0$ with residue field $k$). There we proved that the cohomology groups were well-defined and functorial, as well as introducing categories of coefficients. In the second paper \cite{rclsf2} we proved finite dimensionality and base change for smooth curves, as well as introducing compactly supported cohomology and proving a version of Poincar\'{e} duality. 

These results can be seen in some sense to describe the `geometric' properties of the theory, and in this paper we concentrate on exploring the corresponding `arithmetic' properties, that is the extra structures on these cohomology groups which will reflect properties such as the reduction type of the variety under consideration. The first part of the paper is therefore dedicated to endowing the cohomology groups $H^*_\rig(X/\ekd)$ with a canonical connection, the Gauss--Manin connection, and to this end we introduce a new category of `absolute' coefficients $\mathrm{Isoc}^\dagger(X/K)$ associated to any variety $X/k\lser{t}$ in which the objects have differential structures relative to $K$ rather than $\ekd$. The key observation in allowing us to set up this theory is the fact that when the base field $k$ has a finite $p$-basis, the natural inclusion
$$ k\pow{t}\cdot  dt \rightarrow \Omega^1_{k\pow{t}/k}
$$
is actually an isomorphism. By boot-strapping up, this enable us to do differential calculus relative to $\cur{V}$ on $p$-adic formal schemes over $\cur{V}\pow{t}$, with essentially no modifications required to the usual procedure. This can then be easily carried over to rigid varieties over $S_K$ in the sense of \cite{rclsf1}. The upshot of all this is then the fact that we can make exactly the same definitions and constructions to build the category $\mathrm{Isoc}^\dagger(X/K)$ as we did to build the category $\mathrm{Isoc}^\dagger(X/\ekd)$ in \S5 of \emph{loc. cit.}, provided we work with the category of $p$-adic formal schemes of `pseudo-finite type' over $\cur{V}$, i.e. of finite type over some $\spf{\cur{V}\pow{t}} \times_\cur{V} \ldots \times_\cur{V} \spf{\cur{V}\pow{t}}$. There will be a canonical forgetful functor 
$$ \mathrm{Isoc}^\dagger(X/K)\rightarrow \mathrm{Isoc}^\dagger(X/\ekd)
$$
and since objects in the former category have differential structure relative to $K$, the usual construction will give a Gauss--Manin connection on their cohomology. When $\cur{E}\in F\text{-}\mathrm{Isoc}^\dagger(X/\ekd)$, then simple functoriality of cohomology will give rise to a Frobenius on $H^*_\rig(X/\ekd,\cur{E})$, and this will be compatible with the Gauss--Manin connection when $\cur{E}\in F\text{-}\mathrm{Isoc}^\dagger(X/\ekd)$. Hence we get the following.

\begin{thrm}[\ref{pnmodst}] Let $X/k\lser{t}$ be a variety, and assume that $k$ has a finite $p$-basis. Let $\cur{E}\in F\text{-}\mathrm{Isoc}^\dagger(X/K)$ and $i\geq0$. Assume that the base change morphism
$$ H^i_\rig(X/\ekd,\cur{E})\otimes_{\ekd}\ek \rightarrow H^i_\rig(X/\ek,\hat{\cur{E}})
$$
is an isomorphism. Then $H^i_\rig(X/\ekd,\cur{E})$ comes with a canonical structure as a $\pn$-module over $\ekd$.
\end{thrm}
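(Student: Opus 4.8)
The strategy is to assemble the $\pn$-module structure on $M:=H^i_\rig(X/\ekd,\cur{E})$ from three essentially independent inputs, and to invoke the base change hypothesis only at the very end. Recall that a $\pn$-module over $\ekd$ consists of a finite free $\ekd$-module equipped with a connection $\nabla\colon M\to M\otimes_{\ekd}\Omega^1_{\ekd/K}$ and a horizontal isomorphism $\varphi\colon\sigma^*M\isomto M$ for the fixed Frobenius $\sigma$ on $\ekd$; since $\ekd$ is a field, ``finite free'' just means ``finite dimensional'', and $\Omega^1_{\ekd/K}$ is free of rank one on $dt$, by the $p$-basis hypothesis and the isomorphism $k\pow{t}\cdot dt\isomto\Omega^1_{k\pow{t}/k}$ that underpins the paper. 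Here $\cur{E}$, being an object of $F\text{-}\mathrm{Isoc}^\dagger(X/K)$, carries both a Frobenius structure and a differential structure relative to $K$, and $\hat{\cur E}$ denotes the induced overconvergent $F$-isocrystal on $X$ with coefficients in the complete field $\ek$, as in the statement.

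First I would construct $\nabla$. Using the $K$-relative differential structure on $\cur{E}$ one runs the usual Gauss--Manin formalism --- literally the construction of \S5 of \cite{rclsf1}, which, as explained in the body of the paper, transports word for word to $p$-adic formal schemes of pseudo-finite type over $\cur V$ and thence to rigid varieties over $S_K$ --- on a system of frames and tubes for $X$ over $\cur{V}\pow{t}$, and then over $\cur V$. This produces a canonical connection on $M$ valued in $M\otimes_{\ekd}\Omega^1_{\ekd/K}$; independence of the auxiliary choices is inherited from the independence statements used to set up the coefficient categories.

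Next I would construct $\varphi$ and check horizontality. The image of $\cur E$ in $F\text{-}\mathrm{Isoc}^\dagger(X/\ekd)$ under the forgetful functor carries a Frobenius structure, i.e. an isomorphism $F^*(-)\isomto(-)$ over the semilinear Frobenius of $X$; applying $H^i_\rig(-/\ekd,-)$ and using that pullback along the Frobenius of $X$ induces, after linearisation, the base change of cohomology along $\sigma$, one obtains $\varphi\colon\sigma^*M\to M$. That $\varphi$ is horizontal for $\nabla$ follows because the Frobenius on $X$ is a morphism over $k$: the induced map on cohomology intertwines the Gauss--Manin connections up to the standard $\sigma$-twist, which is precisely horizontality. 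Canonicity of $\varphi$, and independence of the Frobenius lift, again follow from the general theory of the coefficient categories.

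Finally, and this is the only place the hypothesis enters, I would prove that $M$ is finite dimensional over $\ekd$ and that $\varphi$ is an isomorphism. Over the complete field $\ek$ the cohomology $H^i_\rig(X/\ek,\hat{\cur E})$ is the classical rigid cohomology of an overconvergent $F$-isocrystal, hence a $\varphi$-module over $\ek$ --- finite dimensional with bijective linearised Frobenius --- by \cite{rclsf2} for curves and by the finiteness theorems for rigid cohomology over complete discretely valued fields in general. The assumed base change isomorphism $M\otimes_{\ekd}\ek\isomto H^i_\rig(X/\ek,\hat{\cur E})$ respects the Frobenius structures, so $\dim_{\ekd}M=\dim_{\ek}(M\otimes_{\ekd}\ek)<\infty$, and the linearised map $\sigma^*M\to M$ becomes an isomorphism after the field extension $\ekd\hookrightarrow\ek$, hence was one already. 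A finite dimensional $\ekd$-vector space is automatically free, so $(M,\nabla,\varphi)$ is a $\pn$-module over $\ekd$, and all of this data is canonical. Thus the substantive point --- finiteness and non-degeneracy of Frobenius --- has been offloaded onto the base change hypothesis and the classical finiteness over $\ek$; the main remaining burden is the bookkeeping required to see that the constructions of $\nabla$ and $\varphi$ and the proof of their compatibility really do go through unchanged in the $K$-relative setting, but this is formal given the machinery already developed.
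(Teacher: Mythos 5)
Your proposal is correct and follows essentially the same route as the paper: the connection is the Gauss--Manin connection coming from the $K$-relative differential structure on $\cur{E}$, the Frobenius comes from functoriality applied to the image of $\cur{E}$ in $F\text{-}\mathrm{Isoc}^\dagger(X/\ekd)$, compatibility is functoriality of the Gauss--Manin connection, and the base change hypothesis is used exactly as you say --- to import finite dimensionality and bijectivity of the linearised Frobenius from classical rigid cohomology over $\ek$, bijectivity descending along the field extension $\ekd\hookrightarrow\ek$. The only caveat is that the ``formal bookkeeping'' you defer to is where the paper invests most of its effort (full faithfulness of $\mathrm{Strat}^\dagger\rightarrow\mathrm{Strat}$, exactness of the sequence of differentials, frame-independence), but since that machinery is part of the paper's setup rather than of this theorem's proof, your account matches the paper's.
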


This is the case in particular when $X$ is a smooth curve over $k\lser{t}$, in which case it is also straightforward to verify that the Poincar\'{e} pairing is a perfect pairing of $\pn$-modules.

The focus of the second section is then in exploring the expected connections between the arithmetic properties of some variety $X/k\lser{t}$ and the $\pn$-module structure on its cohomology $H^*_\rig(X/\ekd)$, or rather the associated $\pn$-module
$$ H^i_\rig(X/\cur{R}_K):= H^i_\rig(X/\ekd)\otimes_{\ekd} \rk
$$
over the Robba ring $\rk$. The point is that for any variety $X/k\lser{t}$ to which we can apply the above theorem, in particular for smooth curves, we can attach $p$-adic Weil--Deligne representations to their cohomology groups by first base changing to $\rk$ and then using Marmora's functor
$$ \mathrm{WD}: \underline{\mathrm{M}\Phi}^\nabla_{\cur{R}_K} \rightarrow \mathrm{Rep}_{K^\mathrm{un}}(\mathrm{WD}_{k\lser{t}}).
$$
from $\pn$-modules over $\rk$ to Weil--Deligne representations with values in the maximal unramified extension of $K$. This will be our main tool in the study of the arithmetic properties of $p$-adic cohomology of such varieties, and the two main questions we are interested in are the $p$-adic weight-monodromy conjecture and $\ell$-independence. Specifically, the main results we prove concerning smooth curves are the following.

\begin{thrm}[\ref{wmcp}, \ref{lindepopen}] Let $k$ be a finite field, and suppose that $K=W(k)[1/p]$ is the fraction field of the Witt vectors of $K$. Let $X/k\lser{t}$ be a smooth curve, let $\overline{X}$ be a smooth compactification of $X$ and assume that $D=\overline{X}\setminus X$ is geometrically reduced. \begin{enumerate}
\item For all $i\geq0$ there is a natural filtration $^gW_\bu$ on $H^i_\rig(X/\rk)$, the geometric weight filtration, such that for all $n$, the Weil--Deligne representation attached to the $n$th graded piece $$\mathrm{Gr}_n^{^gW}(H^i_\rig(X/\cur{R}_K))$$ is `quasi-pure' of weight $n$, that is the $k$th graded piece of the monodromy filtration is pure of weight $n+k$. Moreover, if $X$ is proper (i.e. $D=\emptyset$) then the only non-zero graded piece is $\mathrm{Gr}_i^{^gW}$. 
\item The Weil--Deligne representation attached to $H^i_\rig(X/\rk)$ is `compatible' with the family of Weil--Deligne representations attached to the Galois representations $$H^i_\et(X_{k\lser{t}^\mathrm{sep}},\Q_\ell)$$ for $\ell\neq p$.
\end{enumerate}
\end{thrm}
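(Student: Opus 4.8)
The plan is to reduce both parts, by excision, to the case of a smooth \emph{proper} curve, which is then realised as the local monodromy, at a place $v$ with completion $k\lser{t}$, of a globally defined overconvergent $F$-isocrystal on a curve over $\F_q$; Kedlaya's $p$-adic Weil~II will govern the weights and Marmora's functor the passage to the $\ell$-adic realisations. First, with $\overline{X}$ the given smooth compactification and $D=\overline{X}\setminus X$ finite and geometrically reduced, the localisation triangle of \cite{rclsf2}, after $-\otimes_{\ekd}\rk$ and in view of purity for the smooth closed $D\subset\overline{X}$ together with the vanishing forced by $D$ being $0$-dimensional and $X$ affine, yields $H^0_\rig(X/\rk)=H^0_\rig(\overline{X}/\rk)$ and the short exact sequence of $\pn$-modules
\[ 0 \to H^1_\rig(\overline{X}/\rk)\to H^1_\rig(X/\rk)\to \ker\bigl(H^0_\rig(D/\rk)(-1)\to H^2_\rig(\overline{X}/\rk)\bigr)\to 0. \]
Define $^gW_\bu$ on $H^1_\rig(X/\rk)$ by $^gW_1=H^1_\rig(\overline{X}/\rk)$, $^gW_2=H^1_\rig(X/\rk)$ (one-step in degrees $0$ and $2$). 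Since $D$ is étale over $k\lser{t}$, Marmora's functor carries $H^0_\rig(D/\rk)$ to the permutation representation of $\mathrm{WD}_{k\lser{t}}$ on the geometric points of $D$, which factors through a finite quotient of the Weil group; hence $\mathrm{Gr}_2^{^gW}$, a sub-$\pn$-module of $H^0_\rig(D/\rk)(-1)$, is quasi-pure of weight $2$ with $N=0$, and the corresponding $\ell$-adic graded piece is the very same module. This leaves quasi-purity of weight $1$ for $\mathrm{Gr}_1^{^gW}=H^1_\rig(\overline{X}/\rk)$, makes the ``$D=\emptyset$'' clause immediate, and reduces part~(2) to the comparison for $\overline{X}$.

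For the proper curve, after a harmless finite separable base change $k\lser{t}\hookrightarrow k'\lser{t'}$ (which changes neither weights nor $N$) one may spread $\overline{X}$ out to a proper smooth relative curve $f\colon\cur{X}\to U$ over a dense open $U$ of a smooth affine curve $C/\F_q$, with $v\in C\setminus U$ the place of completion $k\lser{t}$. Then $\cur{H}:=R^1f_{\rig*}\cur{O}_{\cur{X}/U}$ is an overconvergent $F$-isocrystal on $U$, and — using the functoriality and comparison results of \cite{rclsf1,rclsf2} — $H^1_\rig(\overline{X}/\ekd)\otimes_{\ekd}\rk$ is its local monodromy at $v$ as a $\pn$-module over $\rk$. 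By Kedlaya's $p$-adic Weil~II the isocrystal $\cur{H}$ is pure of weight $1$, whence its local monodromy at $v$ is quasi-pure of weight $1$, the relevant monodromy filtration being that of the nilpotent operator $N$ that $\mathrm{WD}$ reads off from the connection via the $p$-adic local monodromy theorem; this is part~(1). (For curves one can alternatively see this on a semistable model of $\overline{X}$ over $\cur{V}\pow{t}$: the monodromy filtration then has $\mathrm{Gr}^M_1=H^1_\rig$ of the normalisation of the special fibre, a smooth proper $k$-curve pure of weight $1$ by the Weil bound, and $\mathrm{Gr}^M_0,\mathrm{Gr}^M_2$ built from $H^0$ of the components and the nodes with Tate twists.)

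For part~(2) one compares with $\ell\neq p$. The lisse sheaf $R^1f_*\Q_\ell$ on $U$ and $\cur{H}$ are \emph{companions}: by Katz--Messing the fibres of $f$ have equal $\ell$-adic and crystalline Frobenius characteristic polynomials at every closed point of $U$. Marmora's functor is built precisely so as to intertwine the $p$-adic and $\ell$-adic local-monodromy dictionaries of Crew, Matsuda and Tsuzuki, and so identifies $\mathrm{WD}$ of the local monodromy of $\cur{H}$ at $v$ with the local $\ell$-adic Galois representation of $R^1f_*\Q_\ell$ at $v$ — i.e.\ with $H^1_\et(\overline{X}_{k\lser{t}^\mathrm{sep}},\Q_\ell)$ — compatibly with Frobenius and $N$. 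Reassembling with the boundary term of the first paragraph gives, for all $i$, that $\mathrm{WD}(H^i_\rig(X/\rk))$ and the $H^i_\et(X_{k\lser{t}^\mathrm{sep}},\Q_\ell)$ for $\ell\neq p$ lie in one compatible system; descent along the base change above is harmless, since compatibility is stable under restriction to an open subgroup of the Weil group.

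The hard part is exactly the input from Marmora's construction in the preceding paragraph: one must verify that $\mathrm{WD}$ genuinely behaves as a ``$p$-adic companion'', sending the local monodromy of $\cur{H}$ to a Weil--Deligne representation \emph{compatible} with the $\ell$-adic family, so that the case $\ell=p$ lands in the same compatible system as $\ell\neq p$; and, in order even to invoke it, that $H^*_\rig(X/\ekd)\otimes_{\ekd}\rk$ is correctly identified with the local monodromy at $v$ of a globally defined $F$-isocrystal, and that the nilpotent operator $N$ produced from $\nabla$ by the $p$-adic local monodromy theorem matches the $\ell$-adic monodromy operator on the nose rather than merely up to the combinatorics of a degeneration. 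These identifications are precisely where the compatibility of connection and Frobenius furnished by Theorem~\ref{pnmodst}, together with Marmora's functor, has to be used in an essential way.
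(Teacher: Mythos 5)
Your reduction of both statements to the proper case, via the excision sequence
$0 \to H^1_\rig(\overline{X}/\rk)\to H^1_\rig(X/\rk)\to H^0_\rig(D/\rk)(-1)\to H^2_\rig(\overline{X}/\rk)\to 0$
and the two-step filtration $^gW_1=H^1_\rig(\overline{X}/\rk)$, $^gW_2=H^1_\rig(X/\rk)$, is exactly what the paper does, and your treatment of the boundary term is fine. For the proper case, however, you take a genuinely different route from the paper, and it is there that the argument has a real gap. The paper does not spread $\overline{X}$ out over a global curve and invoke $p$-adic Weil~II; it identifies $H^1_{\mathrm{cris}}(\overline{X}/\ek)$ with the dual of the rational Dieudonn\'e module of the $p$-divisible group of the Albanese variety $A$, descends this to $\ekd$ by Kedlaya's overconvergence and full-faithfulness theorems, and then imports the filtration $0\subset G^t\subset G^f\subset G$ of SGA~7 through de Jong's equivalence. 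The graded pieces of the resulting filtration on $D_{\cur{R}}(A)$ are then \emph{explicitly} constant $\pn$-modules coming from $H^1_\rig$ of a torus and of an abelian variety over the finite residue field (plus the orthogonality theorem for the top piece), and a concrete basis computation (Propositions~\ref{key1} and \ref{key2}) shows this weight filtration coincides with the monodromy filtration after applying $\mathrm{WD}$. Your alternative — purity of $R^1f_{\rig*}$ plus the local weight-monodromy statement of $p$-adic Weil~II — could in principle work, but it requires identifying $H^1_\rig(\overline{X}/\ekd)\otimes\rk$ with the local monodromy at $v$ of a globally defined $F$-isocrystal, which is not established anywhere in this series of papers and is not a formal consequence of the comparison results you cite.

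The more serious problem is part~(2). You assert that Marmora's functor, combined with Katz--Messing equality of Frobenius characteristic polynomials at closed points of $U$, ``identifies $\mathrm{WD}$ of the local monodromy of $\cur{H}$ at $v$ with the local $\ell$-adic Galois representation''. That is not a property of the construction of $\mathrm{WD}$: agreement of global Frobenius eigenvalues on $U$ does not formally yield compatibility of the local Weil--Deligne representations at the boundary point $v$, and for the pair $(\ell, p)$ this local compatibility is essentially the theorem you are trying to prove (you acknowledge as much in your final paragraph, but never close the gap). The paper avoids this entirely: because $H^1$ of the proper curve is the Dieudonn\'e module of $A[p^\infty]$, the graded pieces of the monodromy filtration are identified, uniformly in $\ell$ (including $\ell=p$, via SGA~7 and the theory of $1$-motives), with Tate modules of tori, of abelian varieties with potentially good reduction, and of Artin representations; compatibility then reduces to $\ell$-independence for tori and abelian varieties over finite fields together with the classical comparison of characters of finite automorphism groups acting on crystalline versus \'etale cohomology. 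If you want to keep your global approach you would need to supply an actual proof of the local $p$-to-$\ell$ compatibility at $v$ (in current technology this is a theorem of Abe--Marmora type, well beyond what the paper's toolkit provides); otherwise the abelian-variety route is the one that closes the argument.
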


Although we only know base change for smooth curves, we can actually prove cases of $\ell$-independence and the weight-monodromy conjecture for any smooth and proper variety over $k\lser{t}$ using Kedlaya's overconvergence theorem for the cohomology of such varieties (Theorem 7.0.1 of \cite{kedthesis}). We can also prove a $p$-adic criterion for good/semistable reduction of abelian varieties analogous to the N\'{e}ron--Ogg--Shafarevich criterion. These results are actually little more than a rephrasing of those from \cite{sga719} on $p$-divisible groups of abelian varieties in terms of $\pn$-modules over $\rk$, using theorems of Kedlaya and de Jong comparing $p$-divisible groups and Dieudonn\'{e} modules to be able to reinterpret Grothendieck's results in the expected manner. The main results for smooth and proper varieties are almost certainly known to the experts and do not in fact use our construction of $H^i_\rig(X/\ekd)$ in any way. However, there are several reasons for including them here, and in particular including them as an application of the theory of $\ekd$-valued rigid cohomology. Firstly, the precise description of the weight and monodromy filtrations on $H^i_\rig(X/\cur{R}_K)$ in the smooth and proper case, and the cohomological criterion for good or semistable reduction of abelian varieties that we present are far more transparently analogous to the corresponding statements in the $\ell$-adic case than those given in \cite{sga719}, even though the results in \emph{loc. cit.} provide the majority of the actual content of our own. Secondly, the fact that we have a cohomology theory for singular or open varieties, not just smooth and proper ones, allows us to easily deduce versions of weight-monodromy and $\ell$-independence for open curves (as stated above) from those from complete curves. We expect that once we know base change in general, versions of these results for singular or open varieties can be deduced (although not in a completely straightforward manner) from the corresponding ones for smooth and proper varieties. Finally, and perhaps most importantly, a full treatment of these questions will necessarily involve a $p$-adic vanishing cycles formalism, and the construction of a $p$-adic weight spectral sequence. Perhaps the most natural (or at least the most obvious to us) way of obtaining such a formalism will be via a suitably robust theory of arithmetic $\cur{D}$-modules on varieties over $k\lser{t}$ and/or $k\pow{t}$, and our construction of $H^i_\rig(X/\ekd)$ is perhaps the framework for $p$-adic cohomology which most easily leads into the constriction of such a theory. We hope to be able to explore this in greater depth in future work.

\section{Absolute coefficients and the Gauss--Manin connection} \label{absgm}

The set up and notation will be almost exactly as in \cite{rclsf1,rclsf2}. We will denote by $k$ a field of characteristic $p>0$, $K$ a complete, discretely valued field of characteristic $0$ with residue field $k$ and ring of integers $\cur{V}$, $\pi$ a uniformiser for $K$ and $k\lser{t}$ the ring of Laurent series over $k$. However, since we will want to make use of the simple characterisation of $\Omega^1_{k\pow{t}}$ below, we will assume further that $k$ has a finite $p$-basis. We will denote by $\ekd$ the bounded Robba ring over $K$, $\cur{R}_K$ the Robba ring and $\ek$ the Amice ring (for definitions of these see \emph{loc. cit.}). We will fix compatible Frobenii $\sigma$ on the rings $\cur{V},K,\cur{V}\pow{t},S_K:=\cur{V}\pow{t}\otimes_\cur{V} K, \ekd,\ek$ and $\cur{R}_K$. 

In \cite{rclsf1} we constructed a version of rigid cohomology
$$ X\mapsto H^*_\rig(X/\ekd)
$$
for varieties (i.e. separated schemes of finite type) over $k\lser{t}$, taking values in $\ekd$-vector spaces, and in \cite{rclsf2} prove that for smooth curves the natural base change morphism
$$ H^*_\rig(X/\ekd)\otimes_{\ekd}\ek \rightarrow  H^*_\rig(X/\ekd)
$$
to classical rigid cohomology was an isomorphism (actually, this result also holds with coefficients). This therefore gives us a satisfactory \emph{geometric} theory, in that $H^*_\rig(X/\ekd)$ is finite dimensional and of the expected dimension. As explained in the introduction, we expect these groups to come with the addition structure of $(\varphi,\nabla)$-modules over $\ekd$, which will reflect the particular \emph{arithmetic} properties of the situation under consideration.

\begin{defn}  \label{modules} Let $\partial_t:\ekd\rightarrow \ekd$ be the $K$-derivation given by differentiation with respect to $t$.
\begin{itemize}
\item A $\varphi$-module over $\ekd$ is a finite dimensional $\ekd$-vector space $M$ together with a Frobenius structure, that is an $\sigma$-linear map $$\varphi: M \rightarrow M$$which induces an isomorphism $M\otimes_{\ekd,\sigma} \ekd\cong M$.
\item A $\nabla$-module over $\ekd$ is a finite dimensional $\ekd$-vector space $M$ together with a connection, that is an $K$-linear map $$\nabla:M\rightarrow  M$$ such that $\nabla(fm)=\partial_t(f)m+f\nabla(m)$ for all $f\in\ekd$ and $m\in M$.
\item A $(\varphi,\nabla)$-module over $\ekd$ is a finite dimensional $\ekd$-vector space $M$ together with a Frobenius $\varphi$ and a connection $\nabla$, such that the diagram
$$
\xymatrix{
M \ar[r]^\nabla \ar[d]^{\varphi} & M \ar[d]^{\partial_t(\sigma(t))\varphi} \\
M \ar[r]^\nabla & M
}
$$
commutes.
\end{itemize}
\end{defn}

It is this extra structure that will reflect the arithmetic properties of the situation, for example if $A/k\lser{t}$ is an abelian variety, then we expect to be able to `read off' whether or not $A$ has good reduction from the $(\varphi,\nabla)$-module structure on its cohomology $H^*_\rig(A/\ekd)$.

The origin of the expected $\varphi$-module structure on $H^*_\rig(X/\ekd)$ is entirely straightforward: for any variety $X/k\lser{t}$, and any $\sh{E}\in F\text{-}\mathrm{Isoc}^\dagger(X/\ekd)$, functoriality gives a $\sigma$-linear endomorphism
$$
\varphi:H^*_\rig(X/\ekd,\sh{E}) \rightarrow H^*_\rig(X/\ekd,\sh{E})
$$
corresponding to a linear map
$$
\varphi^\sigma: H^*_\rig(X/\ekd,\sh{E})\otimes_{\ekd,\sigma} \ekd \rightarrow H^*_\rig(X/\ekd,\sh{E}).
$$
Of course, we expect this latter map to be an isomorphism for all $X$, but we cannot currently prove this.

\begin{lem} \label{bijfrob} If $X$ is a smooth curve over $k\lser{t}$, then $\varphi^\sigma$ is an isomorphism.
\end{lem}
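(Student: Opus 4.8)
The plan is to reduce the statement to the known base change isomorphism $H^*_\rig(X/\ekd)\otimes_{\ekd}\ek \cong H^*_\rig(X/\ek)$ for smooth curves, proved in \cite{rclsf2}, together with the fact that the corresponding statement for classical rigid cohomology over $\ek$ is already established. More precisely, for $\sh{E}\in F\text{-}\mathrm{Isoc}^\dagger(X/\ekd)$ write $\hat{\sh{E}}$ for its image in $F\text{-}\mathrm{Isoc}^\dagger(X/\ek)$, and note that the Frobenius on $\ekd$-valued cohomology is compatible via base change with the Frobenius on $\ek$-valued cohomology. Classical rigid cohomology of an overconvergent $F$-isocrystal on a variety over the complete discretely valued field $\ek$ is known to carry a genuine (i.e. bijective) Frobenius structure — this is part of the standard package, going back to the finiteness and Frobenius structure results of Kedlaya — so the linear map $\varphi^\sigma$ becomes an isomorphism after applying $-\otimes_{\ekd}\ek$.

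The next step is the descent from $\ek$ to $\ekd$. Here I would use two inputs: first, the base change morphism is an isomorphism, so $H^i_\rig(X/\ekd,\sh{E})$ is a finite dimensional $\ekd$-vector space whose dimension equals $\dim_{\ek} H^i_\rig(X/\ek,\hat{\sh{E}})$; and second, that $\ek$ is faithfully flat over $\ekd$. Faithful flatness is the crucial ring-theoretic fact: the Amice ring $\ek$ is the $p$-adic completion of $\ekd$ (localized appropriately), and this inclusion is known to be faithfully flat. Since $\varphi^\sigma$ is an $\ekd$-linear map between finite dimensional $\ekd$-vector spaces, and it becomes an isomorphism after the faithfully flat base change $-\otimes_{\ekd}\ek$, it is itself an isomorphism. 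One must of course check compatibility: that $\varphi^\sigma \otimes_{\ekd}\ek$ really is identified with the corresponding map $\hat\varphi^\sigma$ for $\hat{\sh E}$, which follows from functoriality of the base change morphism in \cite{rclsf2} with respect to Frobenius pullback.

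The main obstacle, and the step requiring the most care, is the bookkeeping around the twist by $\sigma$. The map $\varphi^\sigma$ has source $H^i_\rig(X/\ekd,\sh{E})\otimes_{\ekd,\sigma}\ekd$, so when we base change to $\ek$ we need the Frobenius $\sigma$ on $\ek$ to extend the one on $\ekd$ (which we have arranged by fixing compatible Frobenii), and we need the canonical isomorphism $\big(M\otimes_{\ekd,\sigma}\ekd\big)\otimes_{\ekd}\ek \cong \big(M\otimes_{\ekd}\ek\big)\otimes_{\ek,\sigma}\ek$ for $M = H^i_\rig(X/\ekd,\sh{E})$; this is formal but must be stated so that the comparison with the classical $\hat\varphi^\sigma$ is literally an equality of maps. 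Once this identification is in place, the argument is essentially the two-line descent: an $\ekd$-linear map of finite free-ish modules (finite dimensional vector spaces, here) that becomes bijective after a faithfully flat extension is bijective. I would also remark that the hypothesis that $k$ has a finite $p$-basis is not needed for this lemma — it is only the base change result and the faithful flatness of $\ek/\ekd$ that are used — but since it is a blanket assumption in this section there is no harm in keeping it.
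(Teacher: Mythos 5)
Your argument is correct and is essentially the paper's own proof: reduce via the base change isomorphism of \cite{rclsf2} to the classical Frobenius structure on $H^*_\rig(X/\ek,\hat{\sh{E}})$, which is bijective, and descend along $\ekd\to\ek$. The only remark worth making is that the descent step is even simpler than you suggest: since $\ekd$ and $\ek$ are both fields, a linear map of finite dimensional vector spaces is an isomorphism if and only if it is after the field extension $-\otimes_{\ekd}\ek$, so no appeal to faithful flatness of a completion is needed.
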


\begin{proof} This follows from base change - we have compatible Frobenii on $\ekd$ and $\ek$, and $\varphi^\sigma\otimes_{\ekd} \ek$ is the usual Frobenius structure on rigid cohomology $H^*_\rig(X/\ek,\hat{\sh{E}})$. This is an isomorphism, and hence $\varphi^\sigma$ is an isomorphism.
\end{proof}

Thus for any overconvergent $F$-isocrystal $\sh{E}$ on $X/\ekd$, with $X/k\lser{t}$ a smooth curve, the cohomology groups $H^*_\rig(X/\ekd/\sh{E})$ have the structure of $\varphi$-modules over $\ekd$. The origin of the $\nabla$-structure, while no less mysterious, is somewhat more involved, and is the subject of the rest of this section. The basic point is to equip our isocrystals with connections relative to $K$, rather than to $S_K$, and the connection on their cohomology with then simply be the Gauss--Manin connection. To implement this strategy, however, we must first discuss differential calculus relative to $K$ on rigid varieties over $S_K$.

\subsection{Differential calculus on rigid varieties over $S_K$}

In this section, we develop the rudiments of differential calculus on rigid varieties over $S_K$, with differential structure relative to $K$. This will then be used to construct categories of isocrystals with naturally occurring Gauss--Manin connections on their cohomology. The starting point is the following crucial result due to de Jong.

\begin{lem} Let $\Omega^1_{k\pow{t}/k}$ denote the module of differentials of $k\pow{t}/k$, and $\widehat{\Omega}^1_{k\pow{t}/k}$ the module of $t$-adically continuous differentials. Then the natural map
$$
\Omega^1_{k\pow{t}/k} \rightarrow  \widehat{\Omega}^1_{k\pow{t}/k} \cong k\pow{t} \cdot dt
$$
is an isomorphism of $k\pow{t}$-modules. If $A_0$ is a finitely generated $k\pow{t}$-algebra then there is an exact sequences of $A_0$-modules
$$
A_0 \cdot dt  \rightarrow \Omega^1_{A_0/k} \rightarrow \Omega^1_{A_0/k\pow{t}}\rightarrow 0
$$
and in particular $\Omega^1_{A_0/k}$ is a finitely generated $A_0$-module. Moreover, if $A_0$ is smooth over $k\pow{t}$ then this sequence is exact on the left as well, and $\Omega^1_{A_0/k}$ is a projective $A$-module.
\end{lem}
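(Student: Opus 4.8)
The plan is to reduce the whole lemma to a concrete computation of $\Omega^1_{k\pow{t}/k}$ and then feed the result into the fundamental exact sequence of Kähler differentials; the only substantial point is the first assertion, and the hypothesis that $k$ has a \emph{finite} $p$-basis $b_1,\dots,b_r$ is precisely what makes it work. First I would record a purely algebraic fact: writing $f=\sum_n a_n t^n\in k\pow{t}$ and expanding each coefficient $a_n$ in the $p$-basis of $k$ shows both that $k\pow{t}$ is free over $k\cdot(k\pow{t})^p=k\pow{t^p}$ on $1,t,\dots,t^{p-1}$, and --- here finiteness of the $p$-basis is essential --- that every element of $k\pow{t^p}$ is a \emph{finite} $k$-linear combination of $p$-th powers of elements of $k\pow{t}$.

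Granting this, I would prove $\Omega^1_{k\pow{t}/k}=k\pow{t}\cdot dt$ in two steps. Generation: write $f=\sum_{0\le j<p}t^j h_j$ with $h_j\in k\pow{t^p}$; since each $h_j$ is a finite $k$-linear combination of $p$-th powers, $dh_j=0$ in $\Omega^1_{k\pow{t}/k}$, so $df=\sum_j j\,t^{j-1}h_j\,dt\in k\pow{t}\cdot dt$. Freeness: the continuous $k$-derivation $\partial_t$ of $k\pow{t}$ has $\partial_t(t)=1$, so the map $r\mapsto r\,dt$ admits the retraction induced by $\partial_t$ and is injective; hence $\Omega^1_{k\pow{t}/k}\cong k\pow{t}\cdot dt$. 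As the module of $t$-adically continuous differentials is also free of rank one on $dt$ by the usual computation, the natural map $\Omega^1_{k\pow{t}/k}\to\widehat{\Omega}^1_{k\pow{t}/k}$ is this isomorphism. I expect this step --- ruling out spurious algebraic differentials --- to be the real content of the lemma; it genuinely fails when $k$ is imperfect with infinite $p$-basis, so the hypothesis cannot be weakened. (One could also just cite de Jong here.)

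For the remaining assertions I would invoke the fundamental exact sequence attached to $k\to k\pow{t}\to A_0$,
$$\Omega^1_{k\pow{t}/k}\otimes_{k\pow{t}}A_0\longrightarrow\Omega^1_{A_0/k}\longrightarrow\Omega^1_{A_0/k\pow{t}}\longrightarrow 0,$$
and rewrite the first term as $A_0\cdot dt$ using the first part. Since $A_0$ is a finitely generated $k\pow{t}$-algebra, $\Omega^1_{A_0/k\pow{t}}$ is a finitely generated $A_0$-module, and $\Omega^1_{A_0/k}$, being an extension of a finitely generated module by a cyclic one, is finitely generated. If moreover $A_0$ is smooth over $k\pow{t}$, the standard behaviour of differentials under a smooth morphism makes this sequence left exact and Zariski-locally split, giving $0\to A_0\cdot dt\to\Omega^1_{A_0/k}\to\Omega^1_{A_0/k\pow{t}}\to 0$ with both outer terms projective $A_0$-modules ($A_0\cdot dt$ free of rank one, $\Omega^1_{A_0/k\pow{t}}$ finite projective by smoothness); hence $\Omega^1_{A_0/k}$ is projective. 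Once the first part is in hand, this last paragraph is routine.
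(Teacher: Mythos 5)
Your proof is correct and follows essentially the same route as the paper: the second half is the cotangent exact sequence for $k\to k\pow{t}\to A_0$ plus the standard splitting/projectivity facts for smooth morphisms, exactly as in the paper's proof. For the first claim the paper simply cites de Jong's Lemma 1.3.3, whose content is precisely the finite-$p$-basis computation you carry out (writing $k\pow{t}$ as $\bigoplus_{j<p}t^j\,k\pow{t^p}$, using finiteness of the $p$-basis to express elements of $k\pow{t^p}$ as finite $k$-combinations of $p$-th powers, and using $\partial_t$ as a retraction), so you have in effect correctly reproved the lemma the paper outsources.
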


\begin{proof} The first claim follows from Lemma 1.3.3 of \cite{dej1}, and the second then simply follows from the exact sequence
$$
 A\otimes \Omega^1_{k\pow{t}/k}  \rightarrow \Omega^1_{A_0/k} \rightarrow \Omega^1_{A_0/k\pow{t}}\rightarrow 0
$$
associated to $k\rightarrow k\pow{t}\rightarrow A$. 
\end{proof}

Write $\cur{V}_n=\cur{V}/\pi^{n+1}$. For a topologically finitely generated $\cur{V}\pow{t}$-algebra $A$, with reductions $A_n= A/\pi^{n+1}$, we let
$$
\Omega^1_{A/\cur{V}} := \varprojlim_ n \Omega^1_{A_n/\cur{V}_n}
$$
be the module of $\pi$-adically continuous differentials. By the previous lemma, this is a finitely generated projective $A$-module, and there is an exact sequence
$$
 A\cdot dt \rightarrow \Omega^1_{A/\cur{V}} \rightarrow \Omega^1_{A/\cur{V}\pow{t}}\rightarrow 0
$$
of $A$-modules, where 
$$
 \Omega^1_{A/\cur{V}\pow{t}} := \varprojlim_n \Omega^1_{A_n/\cur{V}_n\pow{t}}
$$
is the module of $\pi$-adically continuous differentials on $A/\cur{V}\pow{t}$. Moreover, if $A$ is formally smooth over $\cur{V}\pow{t}$, then this sequence is exact on the left as well. This globalises, and hence for any formal scheme $\frak{X}$ over $\cur{V}\pow{t}$ there is an exact sequence
$$
 \cur{O}_\mathfrak{X} \cdot dt \rightarrow \Omega^1_{\mathfrak{X}/\cur{V}} \rightarrow \Omega^1_{\mathfrak{X}/\cur{V}\pow{t}}\rightarrow 0
$$
which is exact on the left when $\mathfrak{X}$ is smooth over $\cur{V}\pow{t}$. To take this further, we will need to compare modules of differentials and smoothness of a formal scheme over $\cur{V}\pow{t}$ to that of its generic fibre $\mathfrak{X}_K$. The latter is constructed by Huber in \cite{huber2}, for separated morphisms $f:X\rightarrow Y$ locally of finite type, and by definition is it $\Delta^*(\cur{I}) =\cur{I}/\cur{I}^2$ where $\Delta:X\rightarrow X\times_Y X$ is the diagonal morphism, and $\cur{I}\subset \cur{O}_{X\times_YX}$ is the ideal of the diagonal.

\begin{lem} \label{smoothmodel} Let $f:\mathfrak{X}\rightarrow \mathfrak{Y}$ be a morphism of formal schemes of finite type over $\cur{V}\pow{t}$, with generic fibre $f_K:\sh{X}\rightarrow \sh{Y}$. Then there is an isomorphism
$$
\Omega^1_{\sh{X}/\sh{Y}} \cong \Omega^1_{\mathfrak{X}/\mathfrak{Y}} \otimes_{\cur{O}_{\mathfrak{X}}} \cur{O}_{\sh{X}}
$$
which respects the derivations of $\cur{O}_{\sh{X}}$ into both.
\end{lem}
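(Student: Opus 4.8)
The plan is to construct the comparison map from functoriality of K\"ahler differentials and then verify it is an isomorphism by a local computation, reducing to the case of a relative polydisc via the second fundamental (conormal) exact sequence, which --- as in the previous lemma --- is available on both the formal scheme and its generic fibre.

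First, the map. Applying functoriality of the module of differentials to the commutative square formed by the specialisation morphisms $\sh{X}\to\mathfrak{X}$, $\sh{Y}\to\mathfrak{Y}$ over the structural maps yields a canonical $\cur{O}_{\sh{X}}$-linear map $\Omega^1_{\mathfrak{X}/\mathfrak{Y}}\otimes_{\cur{O}_{\mathfrak{X}}}\cur{O}_{\sh{X}}\to\Omega^1_{\sh{X}/\sh{Y}}$ carrying the pullback of the universal derivation $d$ to the universal derivation of $\cur{O}_{\sh{X}}$; it is therefore compatible with the derivations into both sides, and the content of the lemma is that it is an isomorphism. This last statement is local on $\mathfrak{X}$ and $\mathfrak{Y}$, so we may assume $\mathfrak{X}=\spf{A}$ and $\mathfrak{Y}=\spf{B}$ are affine.

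Second, the reduction. Present $A$ locally as a quotient of a relative polydisc algebra: choose a closed immersion $\mathfrak{X}\hookrightarrow\mathfrak{P}$ over $\mathfrak{Y}$, with $\mathfrak{P}$ an affine space (or polydisc) over $\mathfrak{Y}$ in coordinates $x_1,\dots,x_n$, cut out by an ideal $\cur{J}\subset\cur{O}_{\mathfrak{P}}$. For $\mathfrak{P}$ itself the lemma is tautological, since $\Omega^1_{\mathfrak{P}/\mathfrak{Y}}$ and $\Omega^1_{\mathfrak{P}_K/\sh{Y}}$ are both free on $dx_1,\dots,dx_n$ and the comparison map identifies the generators. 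Now tensor the right-exact conormal sequence $\cur{J}/\cur{J}^2\to\Omega^1_{\mathfrak{P}/\mathfrak{Y}}\otimes_{\cur{O}_{\mathfrak{P}}}\cur{O}_{\mathfrak{X}}\to\Omega^1_{\mathfrak{X}/\mathfrak{Y}}\to0$ with $\cur{O}_{\sh{X}}$ over $\cur{O}_{\mathfrak{X}}$, and map the result to the conormal sequence $\cur{J}_K/\cur{J}_K^2\to\Omega^1_{\mathfrak{P}_K/\sh{Y}}\otimes_{\cur{O}_{\mathfrak{P}_K}}\cur{O}_{\sh{X}}\to\Omega^1_{\sh{X}/\sh{Y}}\to0$ associated to the closed immersion $\sh{X}\hookrightarrow\mathfrak{P}_K$ on the generic fibre, where $\cur{J}_K$ denotes the ideal of $\sh{X}$. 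The middle vertical map is an isomorphism by the polydisc case; provided the left vertical map is too --- that is, provided $\cur{J}_K=\cur{J}\cdot\cur{O}_{\mathfrak{P}_K}$ and hence $\cur{J}_K/\cur{J}_K^2\cong(\cur{J}/\cur{J}^2)\otimes_{\cur{O}_{\mathfrak{X}}}\cur{O}_{\sh{X}}$ --- comparing cokernels of the two right-exact sequences produces the desired isomorphism $\Omega^1_{\mathfrak{X}/\mathfrak{Y}}\otimes_{\cur{O}_{\mathfrak{X}}}\cur{O}_{\sh{X}}\isomto\Omega^1_{\sh{X}/\sh{Y}}$, which respects the derivations since it is built from $d$.

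The substantive point --- the only place where one needs genuine input about the generic fibre functor over $\cur{V}\pow{t}$ rather than formal bookkeeping --- is thus the compatibility of closed immersions and their conormal sheaves with passage to the generic fibre: that the generic fibre of a closed immersion of formal schemes of finite type over $\cur{V}\pow{t}$ is again a closed immersion, and that its ideal sheaf is obtained by pullback along the specialisation map. This is a consequence of the exactness (flatness) properties of the specialisation map; in the $\pi$-adic setting over $\cur{V}$ it is classical (cf.\ \cite{dej1,huber2}), and it carries over to the present situation once one works inside the category of rigid varieties over $S_K$ developed in \cite{rclsf1}. Everything else --- the local presentation of $A$, freeness of $\Omega^1$ for polydiscs, and right-exactness of the two conormal sequences --- is standard and was essentially already invoked in the proof of the previous lemma.
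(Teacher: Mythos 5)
Your argument is correct and is essentially what the paper's one-line proof appeals to: the paper simply cites the explicit local constructions of the two modules (\S1.6.2 of \cite{huber2} and Corollary I.5.1.13 of \cite{rigidspaces}), which, when unpacked, amount exactly to your reduction to a relative polydisc plus the comparison of conormal sequences, with the compatibility of closed immersions and their ideals with the generic fibre functor as the only substantive input.
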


\begin{proof} This just follows from the explicit construction of both modules, i.e. \S1.6.2 of \cite{huber2} and Corollary I.5.1.13 of \cite{rigidspaces}.
\end{proof}

We would like to extend this to define the module of differentials $\Omega^1_{\sh{X}/K}$ for a rigid variety over $S_K$, that is a rigid space separated and locally of finite type over the `bounded open unit disc'
$$\D^b_K:=\spa{S_K,\cur{V}\pow{t}},$$
and compare it with $\Omega^1_{\frak{X}/\cur{V}}$ for $\frak{X}$ a formal scheme of finite type over $\cur{V}\pow{t}$. From now on we will exclusively work with rigid spaces in the sense of Fujiwara/Kato, note that in particular this means that we can make sense of the fibred product $\sh{X}\times_K\sh{X}$ when $\sh{X}$ is locally of finite type over $\D^b_K$. 

\begin{lem} Let $\frak{X}$ be a $\cur{V}$-flat formal scheme of finite type over $\cur{V}\pow{t}$, so that we have $\Omega^1_{\frak{X}/\cur{V}}=\varprojlim_n \Omega^1_{X_n/\cur{V}_n}$ with $X_n$ the mod $\pi^{n+1}$-reduction of $\frak{X}$. Let $\frak{I}$ denote the ideal of the diagonal of $\frak{X}$ inside $\frak{X}\times_{\cur{V}}\frak{X}$. Then there is an isomorphism 
$$ \Omega^1_{\frak{X}/\cur{V}} \cong \frak{I}/\frak{I}^2
$$
compatible with the natural derivations of $\cur{O}_{\frak{X}}$ into both. 
\end{lem}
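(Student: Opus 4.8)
The statement is local on $\frak{X}$, so I would first reduce to the case $\frak{X}=\Spf A$ with $A$ a $\cur{V}$-flat, topologically finitely generated $\cur{V}\pow{t}$-algebra; being a quotient of a restricted power series ring $\cur{V}\pow{t}\langle x_1,\dots,x_r\rangle$ it is noetherian, and it is $\pi$-adically complete and separated. Then $\frak{X}\times_{\cur{V}}\frak{X}=\Spf(A\widehat{\otimes}_{\cur{V}}A)$ for the $\pi$-adically completed tensor product, the diagonal is cut out by the closed ideal $\frak{I}=\ker(A\widehat{\otimes}_{\cur{V}}A\to A)$, and we have the usual candidate for a universal derivation $d^{\Delta}\colon A\to\frak{I}/\frak{I}^2$, $a\mapsto\overline{1\otimes a-a\otimes 1}$. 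The plan is to reduce modulo $\pi^{n+1}$ to the classical identification of the conormal of the diagonal with Kähler differentials, and then pass to the limit; the one point needing care is that $\frak{I}/\frak{I}^2$ should itself be $\pi$-adically complete and separated, which I will obtain by showing it is finitely generated over $A$.

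For the reductions: since $A\widehat{\otimes}_{\cur{V}}A$ is the $\pi$-adic completion of $A\otimes_{\cur{V}}A$ we have $(A\widehat{\otimes}_{\cur{V}}A)/\pi^{n+1}=A_n\otimes_{\cur{V}_n}A_n$, and the $\cur{V}$-flatness of $A$ (so that $\mathrm{Tor}_1^{\cur{V}}(\cur{V}_n,A)=0$) makes $0\to\frak{I}\to A\widehat{\otimes}_{\cur{V}}A\to A\to0$ stay exact modulo $\pi^{n+1}$. Hence $\frak{I}/\pi^{n+1}\frak{I}=\frak{I}_n:=\ker(A_n\otimes_{\cur{V}_n}A_n\to A_n)$ and consequently $(\frak{I}/\frak{I}^2)/\pi^{n+1}(\frak{I}/\frak{I}^2)=\frak{I}_n/\frak{I}_n^2$. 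The standard purely algebraic identification (no finiteness needed) gives $\frak{I}_n/\frak{I}_n^2\cong\Omega^1_{A_n/\cur{V}_n}$, matching $d^{\Delta}\bmod\pi^{n+1}$ with the universal derivation. Therefore $\varprojlim_n(\frak{I}/\frak{I}^2)/\pi^{n+1}(\frak{I}/\frak{I}^2)\cong\varprojlim_n\Omega^1_{A_n/\cur{V}_n}=\Omega^1_{A/\cur{V}}$ compatibly with derivations, and it remains only to check that the natural map from $\frak{I}/\frak{I}^2$ to this inverse limit is an isomorphism, i.e. that $\frak{I}/\frak{I}^2$ is $\pi$-adically complete and separated.

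This follows once $\frak{I}/\frak{I}^2$ is a finitely generated $A$-module, since $A$ is noetherian and $\pi$-adically complete. To prove finite generation I would consider the tower of closed immersions of formal schemes $\frak{X}\hookrightarrow\frak{X}\times_{\cur{V}\pow{t}}\frak{X}\hookrightarrow\frak{X}\times_{\cur{V}}\frak{X}$, the first being the diagonal over $\cur{V}\pow{t}$ with ideal $\frak{J}$, and write down the associated conormal exact sequence
$$ \cur{N}\longrightarrow\frak{I}/\frak{I}^2\longrightarrow\frak{J}/\frak{J}^2\longrightarrow0,$$
where $\cur{N}$ is the restriction to the diagonal of the conormal of $\frak{X}\times_{\cur{V}\pow{t}}\frak{X}$ inside $\frak{X}\times_{\cur{V}}\frak{X}$. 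The right-hand term is $\Omega^1_{A/\cur{V}\pow{t}}$ by the usual conormal description of continuous differentials for formal schemes of finite type over a noetherian base — here the ambient ring $A\widehat{\otimes}_{\cur{V}\pow{t}}A$ is a quotient of the Tate algebra $\cur{V}\pow{t}\langle \underline{x},\underline{y}\rangle$, hence genuinely noetherian — and it is finitely generated over $A$ because $A$ is of finite type over $\cur{V}\pow{t}$. By base change along $\frak{X}\to\Spf\cur{V}\pow{t}$ the term $\cur{N}$ is $\Omega^1_{\cur{V}\pow{t}/\cur{V}}\otimes_{\cur{V}\pow{t}}A\cong A\cdot dt$, which is finitely generated — in fact free of rank one — by de Jong's lemma recalled at the start of this subsection and the discussion following it; this is precisely where the finite $p$-basis hypothesis enters. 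With both outer terms finitely generated, so is $\frak{I}/\frak{I}^2$, whence it is $\pi$-adically complete and separated and $\frak{I}/\frak{I}^2=\varprojlim_n(\frak{I}/\frak{I}^2)/\pi^{n+1}(\frak{I}/\frak{I}^2)\cong\Omega^1_{A/\cur{V}}$, compatibly with the derivations of $A$ into both. Gluing over an affine cover of $\frak{X}$ finishes the argument.

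The one genuine obstacle is that $\frak{X}\times_{\cur{V}}\frak{X}$ is not of finite type — not even topologically finitely generated — over $\cur{V}$, because $\cur{V}\pow{t}$ is not; so $A\widehat{\otimes}_{\cur{V}}A$ need not be noetherian and $\frak{I}$ need not be a finitely generated ideal, and a priori $\frak{I}/\frak{I}^2$ could fail to be finitely generated or $\pi$-adically complete, which would wreck the passage to the limit. De Jong's lemma, by forcing $\Omega^1_{\cur{V}\pow{t}/\cur{V}}$ to be as small as it can possibly be, is exactly what repairs this through the conormal sequence; everything else is routine bookkeeping with Kähler differentials on formal schemes.
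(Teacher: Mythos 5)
Your skeleton coincides with the paper's up to the crucial point: reduce to $\frak{X}=\spf{A}$, use $\cur{V}$-flatness of $A$ to identify $(\frak{I}/\frak{I}^2)\otimes_{\cur{V}}\cur{V}_n$ with $\frak{I}_n/\frak{I}_n^2\cong\Omega^1_{A_n/\cur{V}_n}$, and pass to the limit, the whole content being that $\frak{I}/\frak{I}^2$ agrees with its $\pi$-adic completion. Where you diverge is in how you establish that last point, and that is where there is a genuine gap. You want to deduce completeness from finite generation of $\frak{I}/\frak{I}^2$ over the noetherian ring $A$ via the conormal sequence $\cur{N}\to\frak{I}/\frak{I}^2\to\frak{J}/\frak{J}^2\to0$. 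The right-hand term is indeed finitely generated, since $A\widehat{\otimes}_{\cur{V}\pow{t}}A$ is noetherian; but the assertion that $\cur{N}\cong\Omega^1_{\cur{V}\pow{t}/\cur{V}}\otimes_{\cur{V}\pow{t}}A\cong A\cdot dt$ ``by base change'' is not a formal base-change statement, and it is precisely here that the non-noetherian difficulty you correctly flag in your closing paragraph resurfaces. Writing $K=\ker(A\widehat{\otimes}_{\cur{V}}A\to A\widehat{\otimes}_{\cur{V}\pow{t}}A)$, one has $\cur{N}=K/K\frak{I}$, and the derivation $c\mapsto[c\otimes1-1\otimes c]$ does give a map $\Omega^1_{\cur{V}\pow{t}/\cur{V}}\otimes A\to K/K\frak{I}$; the issue is its surjectivity. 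Since $A\widehat{\otimes}_{\cur{V}}A$ is not noetherian, $K$ is a priori only the \emph{closure} of the ideal generated by the elements $c\otimes1-1\otimes c$, so surjectivity is itself a finite-generation claim about (a quotient of) $K$ — a claim of exactly the same nature and difficulty as the one about $\frak{I}$ you are trying to prove, and one whose verification again requires reducing mod $\pi^{n+1}$ and controlling the passage to the limit. De Jong's lemma tells you that $\Omega^1_{k\pow{t}/k}$ is free on $dt$, i.e. it controls the conormal at each finite level; it does not by itself control the closure upstairs.

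The paper avoids finite generation altogether: it shows directly that the terms of $0\to\frak{I}/\frak{I}^2\to A^{(2)}\to A\to0$, with $A^{(2)}=(A\widehat{\otimes}_{\cur{V}}A)/\frak{I}^2$, reduce correctly modulo $\pi^{n+1}$ — first $\frak{I}\otimes_{\cur{V}}\cur{V}_n\cong\frak{I}_n$ from flatness, then a surjection $\frak{I}^2\otimes_{\cur{V}}\cur{V}_n\twoheadrightarrow\frak{I}_n^2$ and a diagram chase giving $A^{(2)}\otimes_{\cur{V}}\cur{V}_n\cong A_n^{(2)}$ — and then takes the limit of the resulting exact sequences. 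If you wish to keep your route, you must actually prove that $K/K\frak{I}$ is generated over $A$ by the class of $\tau=t\otimes1-1\otimes t$ (equivalently $K=(\tau)+K\frak{I}$, up to invoking de Jong levelwise); as written, that step is asserted rather than proved, and it carries essentially all of the content of the lemma.
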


\begin{proof} The question is local on $\frak{X}$, which we may therefore assume to be affine, $\frak{X}\cong \spf{A}$. Let $\hat{I}=\ker (A\widehat{\otimes}_\cur{V} A\rightarrow A)$, note that there is a natural continuous derivation $d:A\rightarrow \hat{I}/\hat{I}^2$, and hence we get a linear map
$$ \widehat{\Omega}^1_{A/\cur{V}}\rightarrow \hat{I}/\hat{I}^2.
$$
Since $\Omega^1_{A/\cur{V}}=\varprojlim_n \Omega^1_{A_n/\cur{V}_n}$, it suffices to prove that $I/I^2$ is $p$-adically separated and complete, and that we can identify $I/I^2 \otimes_\cur{V} \cur{V}_n$ with $I_n/I_n^2$, where $I_n$ is the kernel of $A_n\otimes_{\cur{V}_n} A_n \rightarrow A_n$. To see this, first note that the exact sequence
$$ 0 \rightarrow I \rightarrow A\widehat{\otimes}_\cur{V} A \rightarrow A \rightarrow 0
$$
together with $\cur{V}$-flatness of $A$ implies that $I\otimes_{\cur{V}} \cur{V}_n \cong I_n$, and hence that $I\cong \varprojlim_n I_n$. It then follows easily that $\hat{I}^2\otimes_\cur{V} \cur{V}_n\twoheadrightarrow I_n^2$. Let $A^{(2)}$ (resp. $A_n^{(2)}$) denote $(A\widehat{\otimes}_\cur{V} A)/\hat{I}^2$ (resp. $(A_n\otimes_{\cur{V}_n} A_n)/I_n^2$), then we have a commutative diagram
$$ \xymatrix{ \hat{I}^2 \otimes_\cur{V}\cur{V}_n \ar[r]\ar@{->>}[d] & A_n\otimes_{\cur{V}_n} A_n \ar[r]\ar@{=}[d] & A^{(2)} \otimes_\cur{V} \cur{V}_n \ar[d] \ar[r] & 0 \\
I_n^2 \ar[r] & A_n\otimes_{\cur{V}_n}A_n \ar[r] & A_n^{(2)} \ar[r] & 0 }
$$
with exact rows, this implies that $A^{2}\otimes_\cur{V}\cur{V}_n\cong A_n^{(2)}$. Hence by reducing the exact sequence
$$ 0 \rightarrow \hat{I}/\hat{I}^2 \rightarrow A^{(2)}\rightarrow A\rightarrow 0
$$
moduli $\pi^{n+1}$, using $\cur{V}$-flatness of $A$, and taking the limit as $n\rightarrow \infty$ gives the required result. 
\end{proof}

Hence we define, for any rigid variety $\sh{X}$ over $S_K$, the module of differentials
$$ \Omega^1_{\sh{X}/K}:=\cur{I}/\cur{I}^2
$$
where $\cur{I}\subset \cur{O}_{\sh{X}\times_K\sh{X}}$ is the ideal of the diagonal. By the previous lemma, whenever $\sh{X}$ is the generic fibre of some $\cur{V}$-flat formal scheme $\frak{X}$ of finite type over $\cur{V}\pow{t}$ we have
$$ \Omega^1_{\sh{X}/K} = \Omega^1_{\frak{X}/\cur{V}}  \otimes_{\cur{O}_\frak{X}} \cur{O}_\sh{X}
$$
compatibly with the natural derivations
\begin{align*} d:\cur{O}_\frak{X} &\rightarrow \Omega^1_{\frak{X}/\cur{V}} \\
d:\cur{O}_\sh{X}&\rightarrow \Omega^1_{\sh{X}/K}
\end{align*}
and hence in particular we get the following.
\begin{cor} \label{diffexact} For any rigid variety $\sh{X}$ over $S_K$, $\Omega^1_{\sh{X}/K}$ is a coherent $\cur{O}_\sh{X}$-module, and there is a natural exact sequence
$$ \cur{O}_\sh{X}\cdot dt \rightarrow \Omega^1_{\sh{X}/K}\rightarrow \Omega^1_{\sh{X}/S_K}\rightarrow 0
$$
which is moreover exact on the left whenever $\sh{X}$ is smooth over $\D^b_K$.
\end{cor}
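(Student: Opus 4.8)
The plan is to deduce everything from the corresponding facts for formal schemes over $\cur{V}\pow{t}$ by passing to a formal model. All three assertions — coherence, right-exactness of the sequence, and its left-exactness in the smooth case — are local on $\sh{X}$, so I may assume $\sh{X}$ is affinoid. Since we work with rigid spaces in the Fujiwara--Kato sense and $\sh{X}$ is of finite type over $\D^b_K=\spa{S_K,\cur{V}\pow{t}}$, the space $\sh{X}$ is the generic fibre of some formal scheme $\frak{X}$ of finite type over $\cur{V}\pow{t}$; replacing $\frak{X}$ by its quotient by $\pi$-power torsion (which leaves the generic fibre unchanged) we may take $\frak{X}$ to be $\cur{V}$-flat. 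When moreover $\sh{X}$ is smooth over $\D^b_K$ I will want $\frak{X}$ to be smooth over $\cur{V}\pow{t}$, and this too can be arranged locally: a smooth rigid variety over $\D^b_K$ is locally \'{e}tale over a relative polydisc over $\D^b_K$, and relative polydiscs — together with, via lifting of \'{e}tale morphisms, the \'{e}tale part lying over them — admit smooth formal models over $\cur{V}\pow{t}$.

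Fix such a model $\frak{X}$. The preceding lemma provides a canonical isomorphism $\Omega^1_{\sh{X}/K}\cong\Omega^1_{\frak{X}/\cur{V}}\otimes_{\cur{O}_\frak{X}}\cur{O}_\sh{X}$ compatible with the derivations of $\cur{O}_\frak{X}$ and $\cur{O}_\sh{X}$; as $\Omega^1_{\frak{X}/\cur{V}}$ is a finitely generated $\cur{O}_\frak{X}$-module, it follows that $\Omega^1_{\sh{X}/K}$ is a finitely generated, hence coherent, $\cur{O}_\sh{X}$-module. For the exact sequence I would apply $-\otimes_{\cur{O}_\frak{X}}\cur{O}_\sh{X}$ to the exact sequence of $\cur{O}_\frak{X}$-modules
$$\cur{O}_\frak{X}\cdot dt\rightarrow\Omega^1_{\frak{X}/\cur{V}}\rightarrow\Omega^1_{\frak{X}/\cur{V}\pow{t}}\rightarrow0$$
established earlier. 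Right-exactness of the tensor product keeps the sequence exact, the first two terms become $\cur{O}_\sh{X}\cdot dt$ and $\Omega^1_{\sh{X}/K}$ (compatibly with $d$), and Lemma \ref{smoothmodel}, applied to $\frak{X}\rightarrow\spf{\cur{V}\pow{t}}$ — whose generic fibre is $\sh{X}\rightarrow\D^b_K$ — identifies the third term $\Omega^1_{\frak{X}/\cur{V}\pow{t}}\otimes_{\cur{O}_\frak{X}}\cur{O}_\sh{X}$ with $\Omega^1_{\sh{X}/S_K}$. The resulting sequence is independent of the choice of model and functorial in $\sh{X}$, since the first map is $1\mapsto dt$ and the second is the canonical base-change map of K\"{a}hler differentials relative to $K$; the formal model enters only to verify exactness. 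Finally, in the smooth case the chosen $\frak{X}$ is smooth over $\cur{V}\pow{t}$, so the formal sequence is left-exact with locally free cokernel $\Omega^1_{\frak{X}/\cur{V}\pow{t}}$, hence locally split, and therefore remains exact after $-\otimes_{\cur{O}_\frak{X}}\cur{O}_\sh{X}$; this yields the last assertion.

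I expect the only genuinely non-formal input to be the existence of good formal models — that a rigid variety over $S_K$ in the Fujiwara--Kato sense is locally the generic fibre of a $\cur{V}$-flat formal scheme of finite type over $\cur{V}\pow{t}$, and, in the smooth case, that one may choose this model smooth over $\cur{V}\pow{t}$. Everything else (base change of a right-exact sequence, the local splitting in the smooth case, and bookkeeping with the canonical derivations and base-change maps) is routine given the two preceding lemmas.
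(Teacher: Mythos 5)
Your treatment of coherence and of right-exactness is correct and is essentially what the paper intends: both follow from the identification $\Omega^1_{\sh{X}/K}\cong\Omega^1_{\frak{X}/\cur{V}}\otimes_{\cur{O}_{\frak{X}}}\cur{O}_{\sh{X}}$ for a $\cur{V}$-flat finite-type formal model $\frak{X}$ together with the right-exact sequence already established on the formal level and Lemma \ref{smoothmodel} for the third term (indeed the paper's proof opens with ``the only thing not clear is the last claim about left exactness''). The gap is precisely in the step you flag as the only non-formal input: the claim that a rigid variety smooth over $\D^b_K$ locally admits a formal model smooth over $\cur{V}\pow{t}$. The justification you offer --- ``lifting of \'etale morphisms'' from the \'etale map $\sh{X}\rightarrow\D^n_{\D^b_K}$ to an \'etale map of formal models --- goes the wrong way: \'etale maps lift along nilpotent thickenings of the \emph{special} fibre, not from the generic fibre down to a formal model. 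The assertion itself is false in general: already over $K$ an annulus of positive modulus such as $\{\,|\pi|\le|x|\le1\,\}$ is smooth but admits no smooth formal model, nor does any affinoid neighbourhood of its interior non-classical points (any flat model with reduced special fibre is the canonical one, whose reduction has a node), even though it is locally \'etale over a disc. So this step cannot be repaired by a better lifting argument; the local-smooth-model statement you need is simply not available.

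The paper's proof avoids smooth models entirely. By Huber's local structure theorem for smooth morphisms (Corollary 1.6.10 and Proposition 1.7.1 of \cite{huber2}) one may assume $\sh{X}=\spa{A}$ with $A\cong S_K\tate{x_1,\ldots,x_n,y_1,\ldots,y_m}/(f_1,\ldots,f_m)$ and $\det\bigl(\partial f_i/\partial y_j\bigr)$ a unit \emph{in the affinoid algebra} $A$ --- no smoothness is imposed on any formal model. An arbitrary (non-smooth) formal model of this presentation then yields, via the formal-scheme computation, the explicit description of $\Omega^1_{\sh{X}/K}$ as the quotient of the free module on $dt,dx_1,\ldots,dx_n,dy_1,\ldots,dy_m$ by $df_1,\ldots,df_m$; invertibility of the Jacobian in $A$ lets one eliminate the $dy_j$, so $\Omega^1_{\sh{X}/K}$ is free on $dt,dx_1,\ldots,dx_n$ and left-exactness is immediate. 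You should replace your smooth-model reduction with this computation; the rest of your argument can stand.
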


\begin{proof} The only thing not clear is the last claim about left exactness. To see this, we note that the question is local, and hence using Corollary 1.6.10 and Proposition 1.7.1 of \cite{huber2} we may assume that $\sh{X}\cong \spa{A}$ is affinoid with $A$ of the form 
$$ A \cong \frac{S_K\tate{x_1,\ldots,x_n,y_1,\ldots,y_m}}{(f_1,\ldots,f_m)}
$$
and $\det\left( \frac{\partial f_i}{\partial y_j} \right)$ a unit in $A$. From the corresponding statement on formal schemes over $\cur{V}\pow{t}$, we can see that $\Omega^1_{\sh{X}/K}$ can be described explicitly as the $\cur{O}_\sh{X}$-module associated to the $A$-module
$$ \frac{A\cdot dt \oplus \bigoplus_{i=1}^n A \cdot dx_i \oplus \bigoplus_{j=1}^m A\cdot dy_j}{( df_1,\ldots,df_m)}.
$$  
Since $\det\left( \frac{\partial f_i}{\partial y_j} \right)\in A^\times$, it follows that this is equal to 
$$ A\cdot dt \oplus \bigoplus_{i=1}^n A \cdot dx_i
$$
and hence the claim is clear. \end{proof}

We now fix a rigid variety $\sh{X}$, smooth over $\D^b_K$. As usual, we set $\sh{X}^{(n)}$ to be the $n$th-order infinitesimal neighbourhood of $\sh{X}$ inside $\sh{X}\times_K\sh{X}$, that is the closed subscheme of $\sh{X}\times_K \sh{X}$ defined by the ideal $\cur{I}^{n+1}$ according to II.7.3.5 of \cite{rigidspaces}, and $p_i^{(n)}:\sh{X}^{(n)}\rightarrow \sh{X}$ the two projections for $i=1,2$. 

\begin{defn} Let $\sh{X}$ be as above.
\begin{enumerate} \item An integrable connection on an $\cur{O}_\sh{X}$-module $\sh{E}$, relative to $K$, is a homomorphism of sheaves
$$ \nabla:\sh{E}\rightarrow \sh{E}\otimes \Omega^1_{\sh{X}/K}
$$
such that $\nabla(fe)=e\otimes df + f\nabla(e)$ for all $f\in \cur{O}_\sh{X}$ and $e\in \sh{E}$, and such that the induced map
$$ \nabla^2:\sh{E}\rightarrow \Omega^2_{\sh{X}/K}:=\Lambda^2\Omega^1_{\sh{X}/K}
$$
is zero.
\item A stratification on an $\cur{O}_\sh{X}$-module $\sh{E}$, relative to $K$, is a collection of compatible isomorphisms
$$ \epsilon_n:p_2^{(n)*}\sh{E}\isomto p_1^{(n)*}\sh{E}
$$
such that $\epsilon_0=\mathrm{id}$ and $p_{12}^*(\epsilon_n)p_{23}^*(\epsilon_n) = p_{13}^*(\epsilon)$ for all $n$.
\end{enumerate}
\end{defn}

The proof of the following result is then exactly the same as in the classical case.

\begin{prop} There is a natural equivalence of categories between $\cur{O}_\sh{X}$-modules with stratification relative to $K$ and $\cur{O}_\sh{X}$-modules with integrable connection relative to $K$.
\end{prop}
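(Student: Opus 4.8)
The plan is to mimic the classical equivalence between connections and stratifications, checking only that the argument uses nothing beyond the formal properties of $\Omega^1_{\sh{X}/K}$ and of the infinitesimal neighbourhoods $\sh{X}^{(n)}$ already established, together with the fact that $\Q\subseteq K$. Since the statement is local on $\sh{X}$ and the two functors below are compatible with restriction, we may assume $\sh{X}\cong\spa{A}$ is affinoid and smooth over $\D^b_K$; then Corollary \ref{diffexact} and smoothness let us choose \emph{\'{e}tale coordinates}, i.e. elements $t,x_1,\dots,x_n\in A$ for which $dt,dx_1,\dots,dx_n$ is an $\cur{O}_\sh{X}$-basis of $\Omega^1_{\sh{X}/K}$, with dual derivations $\partial_0=\partial_t,\partial_1,\dots,\partial_n$.

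\emph{From a stratification to a connection.} Since $p_1^{(1)}$ is a retraction of $\sh{X}^{(1)}\to\sh{X}$, the square-zero extension $\cur{O}_{\sh{X}^{(1)}}$ splits as $\cur{O}_\sh{X}\oplus\cur{I}/\cur{I}^2=\cur{O}_\sh{X}\oplus\Omega^1_{\sh{X}/K}$, so $p_1^{(1)*}\sh{E}=\sh{E}\oplus(\sh{E}\otimes\Omega^1_{\sh{X}/K})$; using $\epsilon_0=\mathrm{id}$, the rule $e\mapsto\epsilon_1(e\otimes1)$ has the form $e\mapsto e+\nabla(e)$ for a unique additive $\nabla\colon\sh{E}\to\sh{E}\otimes\Omega^1_{\sh{X}/K}$. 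The Leibniz rule for $\nabla$ is the $\cur{O}_{\sh{X}^{(1)}}$-linearity of $\epsilon_1$, and pulling back the cocycle identity to $\sh{X}^{(2)}$ and truncating gives integrability $\nabla^2=0$. This direction is verbatim the classical argument.

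\emph{From a connection to a stratification.} Put $\tau_0=1\otimes t-t\otimes1$ and $\tau_i=1\otimes x_i-x_i\otimes1$; these generate the ideal $\cur{I}$ of the diagonal, and modulo $\cur{I}^{m+1}$ the monomials $\tau^\alpha$ with $|\alpha|\le m$ form an $\cur{O}_\sh{X}$-basis of $\cur{O}_{\sh{X}^{(m)}}$. Define $\epsilon_m\colon p_2^{(m)*}\sh{E}\to p_1^{(m)*}\sh{E}$ by
$$
\epsilon_m(e\otimes1)=\sum_{|\alpha|\le m}\frac{1}{\alpha!}\,\partial^\alpha(e)\otimes\tau^\alpha,\qquad \partial^\alpha=\partial_0^{\alpha_0}\cdots\partial_n^{\alpha_n}\ \text{acting through }\nabla;
$$
the denominators are legitimate because $K$ has characteristic $0$, and each sum is finite, so there is no convergence issue. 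One checks directly that $\epsilon_m$ is $\cur{O}_{\sh{X}^{(m)}}$-linear (this is just the Taylor expansion $p_2^\#(f)=\sum\frac{1}{\alpha!}\partial^\alpha(f)\tau^\alpha$ combined with the Leibniz rule), that $\epsilon_0=\mathrm{id}$, that the $\epsilon_m$ are compatible as $m$ varies, and that each $\epsilon_m$ is an isomorphism, its inverse being the same formula after interchanging the two factors. The cocycle identity $p_{12}^*\epsilon\cdot p_{23}^*\epsilon=p_{13}^*\epsilon$ reduces, via $\tau_{13}=\tau_{12}+\tau_{23}$ and the multinomial theorem, to commutativity of the $\partial_i$ on $\sh{E}$, which is precisely $\nabla^2=0$ in these coordinates. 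A final formal manipulation shows the two constructions are mutually inverse.

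The only point requiring care beyond the classical proof is that these local constructions glue. This is automatic: the connection attached to a stratification is defined intrinsically, and conversely a connection determines its stratification uniquely — by induction on $m$, using that $\cur{O}_{\sh{X}^{(m)}}\to\cur{O}_{\sh{X}^{(m-1)}}$ is a square-zero extension with kernel $\cur{I}^m/\cur{I}^{m+1}\cong\Sym^m\Omega^1_{\sh{X}/K}$, so that the difference of two stratifications with the same associated connection is governed by an $\cur{O}_\sh{X}$-linear map which must vanish. Hence the locally defined quasi-inverse functors glue to the asserted equivalence. I expect no real obstacle here: the whole content is the bookkeeping with the rigid spaces $\sh{X}^{(n)}$ and their structure sheaves, which is already supplied by the Fujiwara--Kato formalism and \cite{huber2,rigidspaces}.
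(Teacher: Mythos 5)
Your argument is correct and is precisely the classical Taylor-series equivalence that the paper invokes without proof (its entire justification is the sentence that the proof is ``exactly the same as in the classical case''). You have correctly identified the only inputs needed — local \'{e}tale coordinates from Corollary \ref{diffexact} and smoothness, freeness of $\cur{O}_{\sh{X}^{(m)}}$ on the monomials $\tau^\alpha$, and $\mathbb{Q}\subseteq K$ for the $1/\alpha!$ — so your write-up is a faithful expansion of the paper's intended argument.
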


\subsection{Overconvergent isocrystals and connections}

Now that the theory of connections and stratifications relative to $K$ is in place, we can proceed to construct the category $\mathrm{Isoc}^\dagger(X/K)$ associated to a $k\lser{t}$-variety $X$ in entirely the same manner as in \S5 of \cite{rclsf1}. We begin with the following, somewhat ad-hoc definition.

\begin{defn} We say that a $p$-adic formal scheme $\frak{X}$ over $\cur{V}$ is of pseudo finite type if there exists a $\cur{V}$-morphism $$\frak{X} \rightarrow \spf{V\pow{t}}\times_\cur{V} \ldots \times_\cur{V}\spf{\cur{V}\pow{t}}$$ of $p$-adic formal schemes which is of finite type. We say that
rigid space $\sh{X}$ over $K$ is locally of pseudo finite type if there exists a $K$-morphism 
$$ \sh{X} \rightarrow \mathbb{D}^b_K \times_K \ldots \times_K \mathbb{D}^b_K
$$
of rigid spaces over $K$ which is locally of finite type. Morphisms are morphisms over $\cur{V}$ and $K$ respectively, note that the generic fibre of a formal scheme of pseudo finite type over $\cur{V}$ is locally of pseudo finite type over $K$, and that both categories are closed under taking fibre products in the larger categories of $p$-adic formal schemes over $\cur{V}$ and rigid spaces over $K$ respectively.
\end{defn}

\begin{defn} The category of $k\lser{t}$-frames over $\cur{V}$ consists of triples $ (X,Y,\frak{P})
$ consisting of an open immersion $X\rightarrow Y$ of a $k\lser{t}$-variety into a $k\pow{t}$-scheme, and a closed immersion $Y\rightarrow \frak{P}$ of $Y$ into a formal $\cur{V}$-scheme which is of pseudo finite type. We say that $(X,Y,\frak{P})$ is proper if $Y$ is proper over $k\pow{t}$ and smooth if $\frak{P}$ is formally smooth over $\cur{V}$ in a  neighbourhood of $X$. Morphisms of frames are morphisms over $(k\lser{t},k\pow{t},\cur{V})$, therefore the category of $k\lser{t}$-frames over $\cur{V}$ contains the category of $\cur{V}\pow{t}$-frames (\cite{rclsf1}, Definition 2.1) as a non-full subcategory.
\end{defn}

If $(X,Y,\frak{P})$ is a $k\lser{t}$-frame over $\cur{V}$, then we have the specialisation map
$$ \frak{P}_K \rightarrow P = \frak{P}\times_\cur{V} k
$$
and can define the tubes $]X[_\frak{P}$ and $]Y[_\frak{P}$ exactly as in \S2 of \cite{rclsf1}, together with the map $j:]X[_\frak{P}\rightarrow ]Y[_\frak{P}$ and the functor $j_X^\dagger=j_*j^{-1}$. We can therefore make the following definitions.

\begin{defn} \begin{enumerate} \item An overconvergent isocrsytal on a $k\lser{t}$-variety $X$ is a collection $\{\sh{E}_\frak{Q}$\} of $j_U^\dagger\cur{O}_{]W[_\frak{Q}}$-modules, one for each pair consisting of a $k\lser{t}$-frame $(U,W,\frak{Q})$ over $\cur{V}$ and a map $U\rightarrow X$, together with isomorphisms $u^*\sh{E}_\frak{Q}\rightarrow \sh{E}_{\frak{Q}'}$ associated to any morphism of frames $(U',W',\frak{Q}')\rightarrow (U,W,\frak{Q})$ commuting with the given morphisms to $X$. The category of such objects is denoted $\mathrm{Isoc}^\dagger(X/K)$.
\item If $(X,Y,\frak{P})$ is a $k\lser{t}$-frame over $\cur{V}$, then an overconvergent isocrsytal on $(X,Y,\frak{P})$ is a collection $\{\sh{E}_\frak{Q}$\} of $j_U^\dagger\cur{O}_{]W[_\frak{Q}}$-modules, one for each $k\lser{t}$-frame $(U,W,\frak{Q})$ over $\cur{V}$ mapping to $(X,Y,\frak{P})$, together with isomorphisms $u^*\sh{E}_\frak{Q}\rightarrow \sh{E}_{\frak{Q}'}$ associated to any morphism of frames $(U',W',\frak{Q}')\rightarrow (U,W,\frak{Q})$ such that the diagram
$$\xymatrix{  (U',W') \ar[r]\ar[dr] & (U,W) \ar[d] \\ & (X,Y)
}
$$
commutes. The category of such objects is denoted $\mathrm{Isoc}^\dagger((X,Y,\frak{P})/K)$.
\end{enumerate} \end{defn}

Then exactly the same argument as in \S5 of \cite{rclsf1} shows that if $(X,Y,\frak{P})$ is a smooth and proper $\cur{V}\pow{t}$-frame, i.e. $\frak{P}$ is smooth over $\cur{V}\pow{t}$ around $X$, then the forgetful functor
$$ \mathrm{Isoc}^\dagger(X/K)\rightarrow \mathrm{Isoc}^\dagger((X,Y,\frak{P})/K)
$$
is an equivalence of categories. Again, if $(X,Y,\frak{P})$ is a smooth frame over $\cur{V}\pow{t}$, we will let
$$ p_i: (X,Y,\frak{P}\times_\cur{V} \frak{P}) \rightarrow (X,Y,\frak{P})
$$
denote the natural projections. This clashes with the notation used in \emph{loc. cit.}, this should hopefully not cause too much confusion.

\begin{defn} \label{ocrelstrat}Let $(X,Y,\mathfrak{P})$ be a smooth frame over $\cur{V}\pow{t}$.  \begin{enumerate} 
\item An overconvergent stratification (relative to $K$) on a $j_X^\dagger\cur{O}_{]Y[_\mathfrak{P}}$-module $\sh{E}$ is an isomorphism
$$
\epsilon:p_2^*\sh{E}\rightarrow p_1^*\sh{E}
$$
of $j^\dagger_X\cur{O}_{]Y[_{\frak{P}\times_\cur{V} \frak{P}}}$-modules, called the Taylor isomorphism, such that $\Delta^*(\epsilon)=\mathrm{id}$ and $p_{13}^*(\epsilon)=p_{12}^*(\epsilon)\circ p_{23}^*(\epsilon)$ on $]Y[_{\frak{P}\times_\cur{V} \frak{P}\times_\cur{V}\frak{P}}$. The category of coherent $j_X^\dagger\cur{O}_{]Y[_\mathfrak{P}}$-modules with overconvergent stratification is denoted $\mathrm{Strat}^\dagger((X,Y,\mathfrak{P})/K)$.
\item A stratification on a $j_X^\dagger\cur{O}_{]Y[_\frak{P}}$-module $\sh{E}$ is a stratification as an $\cur{O}_{]Y[_\frak{P}}$-module. The category of coherent $j_X^\dagger\cur{O}_{]Y[_\mathfrak{P}}$-modules with a stratifications as $\cur{O}_{]Y[_\frak{P}}$-modules is denoted $\mathrm{Strat}((X,Y,\mathfrak{P})/K)$.
\item An integrable connection on a $j_X^\dagger\cur{O}_{]Y[_\mathfrak{P}}$-module $\sh{E}$ is an integrable connection as an $\cur{O}_{]Y[_\frak{P}}$-module. The category of coherent $j_X^\dagger\cur{O}_{]Y[_\mathfrak{P}}$-modules with an integrable connection is denoted $\mathrm{MIC}((X,Y,\mathfrak{P})/K)$
\end{enumerate}
\end{defn}

Again, exactly as in \cite{rclsf1}, we have a series of functors
\begin{align*} \mathrm{Isoc}^\dagger(X/K)\rightarrow \mathrm{Isoc}^\dagger((X,Y,\mathfrak{P})/K)&\rightarrow \mathrm{Strat}^\dagger((X,Y,\mathfrak{P})/K) \\ &\rightarrow \mathrm{Strat}((X,Y,\mathfrak{P})/K) \rightarrow \mathrm{MIC}((X,Y,\mathfrak{P})/K) 
\end{align*}
with everything except $\mathrm{Strat}^\dagger((X,Y,\frak{P})/K) \rightarrow  \mathrm{Strat}((X,Y,\frak{P})/K)$ being an equivalence of categories. 

\begin{prop} The functor $\mathrm{Strat}^\dagger((X,Y,\frak{P})/K) \rightarrow  \mathrm{Strat}((X,Y,\frak{P})/K)$ is fully faithful.
\end{prop}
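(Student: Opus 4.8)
The plan is to run the standard comparison between overconvergent and formal stratifications: over the overconvergent tube $]Y[_{\frak{P}\times_\cur{V}\frak{P}}$ the Taylor isomorphism is already pinned down by its restrictions to all infinitesimal neighbourhoods of the diagonal, because an analytic section defined on a strict neighbourhood of the diagonal is recovered from its formal Taylor expansion along it. Faithfulness is immediate, as the forgetful functor is compatible with the (faithful) underlying functors from $\mathrm{Strat}^\dagger((X,Y,\frak{P})/K)$ and $\mathrm{Strat}((X,Y,\frak{P})/K)$ to coherent $j_X^\dagger\cur{O}_{]Y[_\frak{P}}$-modules, so the content is fullness.

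For fullness, let $(\sh{E},\epsilon_\sh{E})$, $(\sh{F},\epsilon_\sh{F})$ be objects of $\mathrm{Strat}^\dagger((X,Y,\frak{P})/K)$ and let $\phi\colon\sh{E}\to\sh{F}$ be a morphism between their images in $\mathrm{Strat}((X,Y,\frak{P})/K)$, i.e.\ a morphism of $j_X^\dagger\cur{O}_{]Y[_\frak{P}}$-modules compatible with the underlying stratifications as $\cur{O}_{]Y[_\frak{P}}$-modules. I must show $p_1^*\phi\circ\epsilon_\sh{E}=\epsilon_\sh{F}\circ p_2^*\phi$ as morphisms $p_2^*\sh{E}\to p_1^*\sh{F}$ of $j_X^\dagger\cur{O}_{]Y[_{\frak{P}\times_\cur{V}\frak{P}}}$-modules. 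The infinitesimal neighbourhoods $]Y[^{(n)}_\frak{P}$ of the diagonal are closed subspaces of $]Y[_{\frak{P}\times_\cur{V}\frak{P}}$ (compatibly with the identifications set up earlier in this section), and the forgetful functor is by definition the one restricting $\epsilon_\sh{E}$, $\epsilon_\sh{F}$ over these to the stratifications $\epsilon_{\sh{E},n}$, $\epsilon_{\sh{F},n}$ of the image objects; so the hypothesis on $\phi$ says precisely that $\psi:=p_1^*\phi\circ\epsilon_\sh{E}-\epsilon_\sh{F}\circ p_2^*\phi$ vanishes after restriction to $]Y[^{(n)}_\frak{P}$ for every $n$. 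Since $\psi$ is $\cur{O}$-linear it maps $\cur{I}^{n+1}p_2^*\sh{E}$ into $\cur{I}^{n+1}p_1^*\sh{F}$, where $\cur{I}$ is the ideal of the diagonal, so this says that $\psi$ composed with $p_1^*\sh{F}\to p_1^*\sh{F}/\cur{I}^{n+1}$ is zero for all $n$. Hence it suffices to prove that the completion map $p_1^*\sh{F}\to\varprojlim_n p_1^*\sh{F}/\cur{I}^{n+1}$ is injective on a strict neighbourhood of $]X[$ in $]Y[_{\frak{P}\times_\cur{V}\frak{P}}$.

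This final point is local on $\frak{P}$, and it is here that the smoothness of the frame and the finite $p$-basis hypothesis enter: étale-locally on $\frak{P}$ there are coordinates $t,x_1,\dots,x_d$ relative to $\cur{V}$ --- the $t$-direction being available ultimately because of the isomorphism $k\pow{t}\cdot dt\cong\Omega^1_{k\pow{t}/k}$ underlying all of this section --- so that $\tau_0=t\otimes1-1\otimes t$, $\tau_i=x_i\otimes1-1\otimes x_i$ are coordinates transverse to the diagonal, $\cur{I}=(\tau_0,\dots,\tau_d)$ locally, a cofinal family of strict neighbourhoods of $]X[$ near the diagonal is given by the $V\times\{|\tau_j|\le\lambda\}$ for $V$ a strict neighbourhood of $]X[_\frak{P}$ in $]Y[_\frak{P}$ and $\lambda<1$, and over such a neighbourhood $p_1^*\sh{F}$ is the module of power series $\sum_{\underline{k}}a_{\underline{k}}\tau^{\underline{k}}$ with $a_{\underline{k}}\in\sh{F}(V)$ and $|a_{\underline{k}}|\lambda^{|\underline{k}|}\to0$, whose image in $\varprojlim_n p_1^*\sh{F}/\cur{I}^{n+1}$ is the same series regarded as a formal power series. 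The map is therefore injective, and injectivity is preserved under the filtered colimit over $V$ and $\lambda$ defining $j_X^\dagger$. The only thing I expect to require genuine care is matching up the various notions of infinitesimal neighbourhood relative to $K$ with those sitting inside $]Y[_{\frak{P}\times_\cur{V}\frak{P}}$; the injectivity statement itself --- that an overconvergent function near the diagonal is determined by its Taylor series --- is the exact analogue of the fact used classically, and in \S5 of \cite{rclsf1}, to prove full faithfulness of comparison functors of this kind.
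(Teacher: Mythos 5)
Your overall architecture is exactly the paper's: faithfulness is immediate, and fullness reduces to injectivity of the completion map $p_1^*\sh{F}\rightarrow\varprojlim_n p_1^{(n)*}\sh{F}$ along the diagonal, checked locally after choosing \'{e}tale coordinates. The reduction itself (the difference $\psi$ of the two composites is $\cur{O}$-linear and vanishes modulo $\cur{I}^{n+1}$ for all $n$) is fine.

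The gap is in the final injectivity computation, and it sits precisely in the $t$-direction, which is the only place this proposition differs from its classical counterpart. You treat $\tau_0=t\otimes 1-1\otimes t$ on the same footing as the $\chi_i=x_i\otimes 1-1\otimes x_i$, describing a cofinal family of neighbourhoods of the diagonal as products $V\times\{\norm{\tau_j}\leq\lambda\}$ and sections of $p_1^*\sh{F}$ there as convergent power series in all the $\tau_j$ with coefficients in $\sh{F}(V)$. This is correct for the $\chi_i$ (after replacing $\frak{P}\times_\cur{V}\frak{P}$ by $\widehat{\A}^d_\frak{P}\times_\cur{V}\spf{\cur{V}\pow{t}}$ via the \'{e}tale coordinates), but not for $\tau_0$: the relevant formal scheme in that direction is $\spf{\cur{V}\pow{t}}\times_\cur{V}\spf{\cur{V}\pow{t}}=\spf{\cur{V}\pow{t_1,t_2}}$, and the closed tube of radius $r^{-1/m}$ of the diagonal $\spec{k\pow{t}}$ inside $\D^b_K\times_K\D^b_K$ is not a relative disc over $\D^b_K$ in the coordinate $\tau_0$; its ring of functions is $\cur{O}_m=(S_K\widehat{\otimes}_KS_K)\tate{T}/(pT-\tau_0^m)$, and sections of $p_1^*\sh{F}$ are of the form $\cur{O}_m\widehat{\otimes}_{S_K}\left(M\otimes_A A_m\tate{r^{1/m}\chi_1,\ldots,r^{1/m}\chi_d}\right)$ rather than the power series module you describe. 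Consequently the injectivity you need is not ``the exact analogue of the classical fact'': one must show that $\cur{O}_m\widehat{\otimes}_{S_K}B\rightarrow B\pow{\tau_0}$ is injective for a topologically finite type $\ek$-algebra $B$, which the paper does by invoking adic flatness of $B$ over $\ek$ to reduce to injectivity of $(S_K\widehat{\otimes}_KS_K)\tate{T}/(pT-(1\otimes t-t\otimes 1)^m)\rightarrow S_K\pow{\tau_0}$, verified by a direct calculation. This is the genuinely new content of the proposition, and your proposal asserts rather than proves it.
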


\begin{proof} Faithfulness is clear, since both categories embed faithfully in the category of coherent $j_X^\dagger\cur{O}_{]Y[_\frak{P}}$-modules. We are therefore given a morphism
$$ \cur{E}\rightarrow \cur{F}
$$
between coherent $j^\dagger_X\cur{O}_{\frak{P}_K}$-modules with overconvergent stratification, which commutes with the level $n$ Taylor isomorphisms
\begin{align*} p_2^{(n)*}\cur{E} &\rightarrow p_1^{(n)*}\cur{E} \\ 
p_2^{(n)*}\cur{F} &\rightarrow p_1^{(n)*}\cur{F}
\end{align*}
for all $n$, and we must show that it commutes with the overconvergent Taylor isomorphisms
\begin{align*} p_2^{*}\cur{E} &\rightarrow p_1^{*}\cur{E}
\\ p_2^{*}\cur{F} &\rightarrow p_1^{*}\cur{F}.
\end{align*}
As in the proof of Proposition 7.2.8 of \cite{rigcoh}, it therefore suffices to show that the map
$$ \mathrm{Hom}_{j_X^\dagger\cur{O}_{\frak{P}^2}}(p_2^*\cur{E},p_1^*\cur{F}) \rightarrow \varprojlim_n \mathrm{Hom}_{\cur{O}_{\frak{P}^{(n)}}}(p_2^{(n)*}\cur{E},p_1^{(n)*}\cur{F})
$$
is injective. By choosing a presentation of $\cur{E}_1$, it thus suffices to show that the natural map
$$ p_{1*}p_1^*\cur{F} \rightarrow \varprojlim p_{1}^{(n)*}\cur{F}
$$
of sheaves is injective. This question is local on $\frak{P}$, which we may therefore assume to be affine, and by restricting to the interior tube $]X[^\circ_\frak{P}$ as in the proof of Proposition 5.7 of \cite{rclsf1} we may in fact assume that $\frak{P}$ is a formal scheme over $\cur{O}_{\ek}$ and that $Y=X$. Again, using the fact that the question is local on $\frak{P}_K$ it suffices to prove the injection on global sections. We may also choose co-ordinates 
$$ (x_1,\ldots,x_d):\frak{P} \rightarrow \widehat{\A}^d_{\cur{O}_{\ek}}
$$
which are \'{e}tale in a neighbourhood of $X$, so that $dt,dx_1,\ldots,dx_d$ is a basis for $\Omega^1_{]X[_\frak{P}/K}$. Hence, if we set $A=\Gamma(]X[_\frak{P},\cur{O}_{\frak{P}_K})$ and $M=\Gamma(]X[_\frak{P},\cur{F})$, we have isomorphisms
\begin{align*}  \Gamma(]X[_\frak{P},p_{1}^{(n)*}\cur{F}) &\cong M\otimes_A \frac{A[\tau,\chi_1,\ldots,\chi_d]}{(\tau,\chi_1,\ldots,\chi_d)^n} \\
\Gamma(]X[_\frak{P},\varprojlim_n p_{1}^{(n)*}\cur{F}) &\cong M\otimes_A A\pow{\tau,\chi_1,\ldots,\chi_d}
\end{align*}
where $\tau=t\otimes1-1\otimes t$ and $\chi_i=x_i\otimes1-1\otimes x_i$. Of course, the same calculation holds on any open subspace of $]X[_\frak{P}$, for example the closed tubes $[X]_m$ of radius $r^{-1/m}$ inside $\frak{P}_K$. If we let $A_m=\Gamma([X]_m,\cur{O}_{\frak{P}_K})$ then we therefore get 
$$ \Gamma([X]_m,\varprojlim_n p_{1}^{(n)*}\cur{F}) \cong M\otimes_A A_n\pow{\tau,\chi_1,\ldots,\chi_d}.
$$
The functions $\chi_1,\ldots,\chi_d$ induce a morphism
$$ \frak{P}\times_{\cur{V}}\frak{P} \rightarrow \widehat{\A}^d_{\frak{P}\times_\cur{V} \spf{\cur{V}\pow{t}}}= \widehat{\A}^d_\frak{P} \times_\cur{V} \spf{\cur{V}\pow{t}}
$$
which is \'{e}tale in a  neighbourhood of $X$, and hence by the analogue of Proposition 4.1 of \cite{rclsf1} over $\cur{V}\pow{t}\widehat{\otimes}_\cur{V}\cur{V}\pow{t}$ (whose proof is identical), we may replace $\frak{P}\times_\cur{V}\frak{P}$ by 
$$\widehat{\A}^d_\frak{P} \times_\cur{V} \spf{\cur{V}\pow{t}}= \widehat{\A}^d_\frak{P} \times_{\cur{V}\pow{t}} \spf{\cur{V}\pow{t}}\times_\cur{V} \spf{\cur{V}\pow{t}}
$$
Now, in this case the tubes $]X[$ are actually rigid varieties, and it is not hard to see that their formation commutes with fibre products, and hence we may make the identification
$$ ]X[_{\frak{P}\times_\cur{V}\frak{P}} \cong ]X[_{\widehat{\A}^d_\frak{P}} \times_{\D^b_K} ]\spec{k\pow{t}}[_{\spf{\cur{V}\pow{t}}\times_\cur{V} \spf{\cur{V}\pow{t}}}.
$$
Since we are only interested in global section, we may replace the open tubes $]X[$ and $]\spec{k\pow{t}}[$ by the corresponding closed tubes of radius $r^{-1/m}$ (where $r=\norm{1/\pi}$ for any uniformiser $\pi$). If we let $\cur{O}_m$ be the ring of functions on the closed tube $[\spec{k\pow{t}}]_m$ inside $\D^b_K\times_K \D^b_K$ then using Proposition 4.3.6 of \cite{rigcoh} we may identify the group of sections of $p_1^*\sh{F}$ on the closed tube $[X]_m$ inside $\frak{P}\times_\cur{V}\frak{P}$ with
$$  \cur{O}_m \widehat{\otimes}_{S_K} \left( M\otimes_A A_m \tate{r^{1/m}\chi_1,\ldots,r^{1/m}\chi_d} \right).
$$
By inducting on the number of generators of $M$, we may reduce to the case when $M$ is a quotient of $A$, and by replacing $A_m$ by the affinoid algebra $M\otimes_A A_m$ it suffices to prove that the map
$$  \cur{O}_m \widehat{\otimes}_{S_K} B \tate{r^{1/m}\chi_1,\ldots,r^{1/m}\chi_d} \rightarrow B\pow{\tau,\chi_1,\ldots,\chi_d}
$$
is injective for any topologically finite type $\ek$-algebra $B$. Here the map is the obvious one on $B \tate{r^{1/m}\chi_1,\ldots,r^{1/m}\chi_d}$ and on $\cur{O}_m$ is induced by the map
\begin{align*} S_K \widehat{\otimes}_K S_K &\rightarrow S_K \pow{\tau} \\
f \otimes g &\mapsto f(t)g(\tau+t) 
\end{align*}
(in other words $\tau=1\otimes t-t\otimes 1$). Hence it suffices to prove that the map
$$ \cur{O}_m \widehat{\otimes}_{S_K} B \rightarrow B\pow{\tau}
$$
is injective. But now by using adic flatness of $B$ over $\ek$, and writing everything out explicitly, it suffices to show that
$$ \frac{(S_K \widehat{\otimes}_K S_K) \tate{T}}{pT-(1\otimes t-t\otimes 1)^m } \rightarrow S_K\pow{\tau}
$$
is injective, which follows from a direct calculation.
\end{proof}

Hence we may define the full subcategory 
$$ \mathrm{MIC}^\dagger((X,Y,\frak{P})/K)\subset \mathrm{MIC}((X,Y,\frak{P})/K)
$$
as the essential image of $\mathrm{Strat}^\dagger((X,Y,\frak{P})/K)$, and we get an equivalence
$$ \mathrm{Isoc}^\dagger(X/\ekd)\cong  \mathrm{MIC}^\dagger((X,Y,\frak{P})/K)
$$
whenever $(X,Y,\frak{P})$ is a smooth $\cur{V}\pow{t}$-frame. Functoriality and Frobenius structures can be dealt with \emph{exactly} as in \emph{loc. cit.}, we therefore get categories $F\text{-}\mathrm{Isoc}^\dagger(X/K)$ which are Zariski-local in $X$, and functorial in $X$, $k\lser{t}$ and $K$. Since the category of $\cur{V}\pow{t}$-frames embeds faithfully in the category of $k\lser{t}$-frames over $\cur{V}$, we get canonical `forgetful' functors
$$ (F\text{-})\mathrm{Isoc}^\dagger(X/K) \rightarrow (F\text{-})\mathrm{Isoc}^\dagger(X/\ekd)
$$
which is seen to be faithful by comparison with the category of modules with integrable connection.

\begin{defn} Let $\cur{E}\in(F\text{-})\mathrm{Isoc}^\dagger(X/K)$. Then we define the rigid cohomology
$$ H^i_\rig(X/\ekd,\cur{E})
$$
to be the cohomology of the associated object in $(F\text{-})\mathrm{Isoc}^\dagger(X/\ekd)$.
\end{defn}

\subsection{The Gauss--Manin connection and $(\varphi,\nabla)$-modules}

We can now explain how for every variety $X/k\lser{t}$, and every object $\sh{E}\in \mathrm{Isoc}^\dagger(X/K)$, there is a natural connection on $H^i_\rig(X/\ekd,\sh{E})$.

\begin{prop} Let $(X,Y,\mathfrak{P})$ be a smooth $\cur{V}\pow{t}$-frame. Then $j^\dagger_X \Omega^1_{]Y[_\mathfrak{P}/K}$ is a locally free $j_X^\dagger\cur{O}_{]Y[_\mathfrak{P}}$-module, and there is an exact sequence
$$
0\rightarrow j_X^\dagger\cur{O}_{]Y[_\mathfrak{P}}\cdot dt \rightarrow j^\dagger_X \Omega^1_{]Y[_\mathfrak{P}/K} \rightarrow j^\dagger_X \Omega^1_{]Y[_\mathfrak{P}/S_K} \rightarrow 0
$$
of locally free $j_X^\dagger\cur{O}_{]Y[_\mathfrak{P}}$-modules. 
\end{prop}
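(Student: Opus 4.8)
The plan is to produce the sequence on the rigid generic fibre of the formal scheme underlying the frame, where the previous results of this section apply directly, and then to restrict to the tube and apply $j_X^\dagger$; throughout one only needs the behaviour near $X$, which is exactly where the smoothness hypothesis bites.

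Since $\frak{P}$ is formally smooth over $\cur{V}\pow{t}$ in a neighbourhood of $X$, there is an open formal subscheme $\frak{P}^\circ\subseteq\frak{P}$ containing $X$ in its special fibre and smooth over $\cur{V}\pow{t}$; setting $Y^\circ=Y\cap\frak{P}^\circ$, the triple $(X,Y^\circ,\frak{P}^\circ)$ is again a smooth $\cur{V}\pow{t}$-frame, and $V:=\,]Y^\circ[_{\frak{P}^\circ}=\frak{P}^\circ_K\cap\,]Y[_{\frak{P}}$ is a strict neighbourhood of $]X[_{\frak{P}}$ inside $]Y[_{\frak{P}}$ (the tube of $\frak{P}\setminus\frak{P}^\circ$ lies in $]Y\setminus X[_{\frak{P}}$, being disjoint from $X$). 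As $j_X^\dagger$ of a sheaf on $]Y[_{\frak{P}}$ depends only on its restriction to a strict neighbourhood of $]X[_{\frak{P}}$, and as $\Omega^1_{-/K}$ and $\Omega^1_{-/S_K}$ are compatible with open immersions, it suffices to produce the sequence, with all terms locally free, on the rigid variety $\frak{P}^\circ_K$.

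Now $\frak{P}^\circ_K$ is smooth over $\D^b_K$, being the generic fibre of a formal scheme smooth over $\cur{V}\pow{t}$, so Corollary~\ref{diffexact} directly supplies the short exact sequence
$$0\longrightarrow\cur{O}_{\frak{P}^\circ_K}\cdot dt\longrightarrow\Omega^1_{\frak{P}^\circ_K/K}\longrightarrow\Omega^1_{\frak{P}^\circ_K/S_K}\longrightarrow0;$$
here $\cur{O}_{\frak{P}^\circ_K}\cdot dt$ is free of rank one and $\Omega^1_{\frak{P}^\circ_K/S_K}$ is locally free by smoothness over $\D^b_K$, whence the middle term, an extension of a locally free module by a free one, is locally free too. (If one prefers to avoid appealing to smoothness of the rigid generic fibre, the same sequence can be obtained by base changing the exact sequence $0\to\cur{O}_{\frak{P}^\circ}\cdot dt\to\Omega^1_{\frak{P}^\circ/\cur{V}}\to\Omega^1_{\frak{P}^\circ/\cur{V}\pow{t}}\to0$ of finite locally free $\cur{O}_{\frak{P}^\circ}$-modules recalled earlier along $\cur{O}_{\frak{P}^\circ}\to\cur{O}_{\frak{P}^\circ_K}$ — this preserves exactness since the sequence is Zariski-locally split — and identifying the terms via the formula $\Omega^1_{\frak{P}^\circ_K/K}\cong\Omega^1_{\frak{P}^\circ/\cur{V}}\otimes_{\cur{O}_{\frak{P}^\circ}}\cur{O}_{\frak{P}^\circ_K}$ together with Lemma~\ref{smoothmodel}, using that $\frak{P}^\circ$ is $\cur{V}$-flat of finite type over $\cur{V}\pow{t}$.) Finally, restrict to $V$ and apply the exact functor $j_X^\dagger$: it carries $\cur{O}_{V}\cdot dt$ to $j_X^\dagger\cur{O}_{]Y[_{\frak{P}}}\cdot dt$ and the restriction of a locally free $\cur{O}_{\frak{P}^\circ_K}$-module to a locally free $j_X^\dagger\cur{O}_{]Y[_{\frak{P}}}$-module, exactly as for the relative differentials of the $\ekd$-theory in \cite{rclsf1}, so one gets the asserted exact sequence of locally free $j_X^\dagger\cur{O}_{]Y[_{\frak{P}}}$-modules.

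The one point that is not a purely formal consequence of earlier material is the geometric bookkeeping in the second paragraph: checking that $\frak{P}^\circ_K\cap\,]Y[_{\frak{P}}$ really is a strict neighbourhood of $]X[_{\frak{P}}$, so that passing to $(X,Y^\circ,\frak{P}^\circ)$ leaves each sheaf $j_X^\dagger(-)$ in the statement unchanged — i.e. that $j_X^\dagger$ genuinely only sees the locus where $\frak{P}$ is $\cur{V}\pow{t}$-smooth. Once this is in hand, the remainder is the usual construction of the sheaf of relative differentials on a smooth frame in Berthelot's rigid cohomology, transcribed to the relative-to-$K$ setting by means of the comparison isomorphisms established earlier in this section.
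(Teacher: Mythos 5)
Your proof is correct and follows the same route as the paper, whose entire proof is the one line ``Follows from Corollary~\ref{diffexact}''; you have simply made explicit the reduction to a $\cur{V}\pow{t}$-smooth open neighbourhood of $X$ and the application of $j_X^\dagger$, both of which are the intended (standard) steps.
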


\begin{proof}Follows from Corollary \ref{diffexact}. 
\end{proof}

The point of this is that if $(X,Y,\mathfrak{P})$ is a smooth $\cur{V}\pow{t}$-frame, and $\sh{E}$ is a $j_X^\dagger\cur{O}_{]Y[}$-module $\sh{E}$ with integrable connection relative to $K$, the we obtain a Gauss--Manin connection
$$
\nabla: H^i(]Y[_\mathfrak{P},\sh{E}\otimes \Omega^*_{]Y[_\mathfrak{P}/S_K}) \rightarrow H^i(]Y[_\mathfrak{P},\sh{E}\otimes \Omega^*_{]Y[_\mathfrak{P}/S_K})
$$
on its de Rham cohomology (over $S_K$) in the usual way as follows. Define a filtration $F^\bu$ on the complex $\sh{E}\otimes \Omega^*_{]Y[_\mathfrak{P}/K}$ by
$$
 F^0=\sh{E}\otimes  \Omega^*_{]Y[_\mathfrak{P}/K},\; 
 F^1= \mathrm{im} (\sh{E} \otimes \Omega^{*-1}_{]Y[_\mathfrak{P}/K} \otimes \cur{O}_{]Y[_\mathfrak{P}}\cdot dt\rightarrow \sh{E}\otimes \Omega^*_{]Y[_\mathfrak{P}/K}) ,\;
 F^2= 0 
$$
so that we have
$$
\mathrm{Gr}_F^0 = \sh{E}\otimes \Omega^*_{]Y[_\mathfrak{P}/S_K},\;
\mathrm{Gr}_F^1 \cong \sh{E} \otimes \Omega^{*-1}_{]Y[_\mathfrak{P}/S_K} \cdot dt.
$$
Thus the spectral sequence associated to this filtration has $E_1$-page with differentials
$$ H^i(]Y[_\mathfrak{P}, \sh{E}\otimes\Omega^*_{]Y[_\mathfrak{P}/S_K}) \rightarrow H^i(]Y[_\mathfrak{P}, \sh{E}\otimes\Omega^*_{]Y[_\mathfrak{P}/S_K})\cdot dt
$$
which defines the Gauss--Manin connection on each $H^i(]Y[_\mathfrak{P}, \sh{E}\otimes\Omega^*_{]Y[_\mathfrak{P}/S_K})$.

It is easy to see that if $u:(X',Y',\frak{P}')\rightarrow (X,Y,\frak{P})$ is a morphism of smooth and proper $\cur{V}\pow{t}$-frames, and $\sh{E}\in\mathrm{MIC}((X,Y,\frak{P})/K)$ then the natural morphism
$$ H^i(]Y[_\frak{P},\sh{E}\otimes\Omega^*_{]Y[_\frak{P}}) \rightarrow   H^i(]Y'[_{\frak{P}'},u^*\sh{E}\otimes\Omega^*_{]Y'[_{\frak{P}'}})
$$
is horizontal, which implies firstly that the connection does not depend on the choice of frame $(X,Y,\frak{P})$ but only on $X$, and secondly that the functoriality morphisms in rigid cohomology
$$H^i_\rig(X/\ekd,\cur{E})\rightarrow  H^i_\rig(X'/\ekd,f^*\sh{E})
$$
associated to a pair of morphisms $f:X'\rightarrow X$ and $f^*\cur{E}\rightarrow \cur{E}'$ are in fact horizontal.

To extend this construction to compactly supported cohomology is entirely straightforward, to explain how this is done we fix a smooth and proper $\cur{V}\pow{t}$-frame $(X,Y,\frak{P})$ and $\cur{E}\in\mathrm{MIC}((X,Y,\frak{P})/K)$. Then exactly as in 5.13 of \cite{rclsf1}, we may choose some neighbourhood $V$ of $]X[_\frak{P}$ inside $]Y[_\frak{P}$ to which $\cur{E}$ extends. Then there is a natural Gauss--Manin filtration on $\cur{E}\otimes\Omega^*_{V/K}$, and hence we can consider $\mathbf{R}\underline{\Gamma}_{]X[_\frak{P}}(\cur{E}\otimes\Omega^*_{V/K})$ as an object in the filtered derived category of sheaves on $V$. Exactly as in the case for cohomology without supports, looking at the differentials on the $E_1$-page of the associated spectral sequence calculating
$$ H^i(V,\mathbf{R}\underline{\Gamma}_{]X[_\frak{P}}(\cur{E}\otimes\Omega^*_{V/K}))
$$
then gives morphisms
$$  H^i_{c,\rig}((X,Y,\frak{P})/\ekd,\cur{E}) \rightarrow  H^i_{c,\rig}((X,Y,\frak{P})/\ekd,\cur{E})\cdot dt
$$
which define the connection on cohomology groups with compact support. Again, one easily verifies that these only depend on $X$ and not on the choice of frame, and that the functoriality morphisms associated to either proper maps or open immersions are horizontal. It is also straightforward to check that the Poincar\'{e} pairing
$$ H^i_\rig (X/\ekd,\cur{E})\times H^j_{c,\rig}(X/\ekd,\cur{F})\rightarrow H^{i+j}_{c,\rig}(X/\ekd,\cur{E}\otimes \cur{F})
$$
constructed in \S3 of \cite{rclsf2} is compatible with the connections.

Since the Frobenius action on both $H^i_\rig(X/\ekd,\cur{E})$ and $H^i_{c,\rig}(X/\ekd,\cur{E})$ (when $\cur{E}\in F\text{-}\mathrm{Isoc}^\dagger(X/\ekd)$) is simply that induced by functoriality, it follows from functoriality of the Gauss--Manin connection that the connection and Frobenius structures are compatible, in the sense that the diagram
$$ \xymatrix{ H^i_\rig(X/\ekd,\cur{E})\ar[r]^\nabla \ar[d]^\varphi & H^i_\rig(X/\ekd,\cur{E})\ar[d]^{\partial_t(\sigma(t))\varphi} \\
H^i_\rig(X/\ekd,\cur{E}) \ar[r]^\nabla & H^i_\rig(X/\ekd,\cur{E})
} 
$$
commutes.

\begin{cor}\label{pnmodst} Let $X/k\lser{t}$ and $\sh{E}\in F\text{-}\mathrm{Isoc}^\dagger(X/K)$. Assume that $H^i_\rig(X/\ekd,\sh{E})$ is finite dimensional, and that the the induced linear Frobenius
$$  \varphi^\sigma: H^i_\rig(X/\ekd,\sh{E}) \otimes_{\ekd,\sigma} \ekd\rightarrow H^i_\rig(X/\ekd,\sh{E})
$$
is bijective. Then $H^i_\rig(X/\ekd,\sh{E})$ is a $(\varphi,\nabla)$-module over $\ekd$. In particular this is the case if the base change morphism
$$H^i_\rig(X/\ekd,\sh{E}) \otimes_{\ekd} \ek \rightarrow H^i_\rig(X/\ek,\hat{\sh{E}})
$$ 
is an isomorphism. The same is true for compactly supported cohomology.
\end{cor}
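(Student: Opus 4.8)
The plan is to assemble the structures already constructed in the preceding subsections and then deduce the final assertion. By hypothesis $H^i_\rig(X/\ekd,\sh{E})$ is a finite-dimensional $\ekd$-vector space on which the linear Frobenius $\varphi^\sigma$ is bijective, which is exactly the data of a $\varphi$-module over $\ekd$ in the sense of Definition~\ref{modules}. Since $\sh{E}$ lies in $\mathrm{Isoc}^\dagger(X/K)$ it carries an integrable connection relative to $K$, so the Gauss--Manin construction above, carried out on any smooth and proper $\cur{V}\pow{t}$-frame computing the cohomology (the result being independent of the frame), produces a connection $\nabla$ on $H^i_\rig(X/\ekd,\sh{E})$ relative to $K$, valued in $H^i_\rig(X/\ekd,\sh{E})\otimes_{\ekd}\Omega^1_{\ekd/K}$; as $\Omega^1_{\ekd/K}\cong\ekd\cdot dt$ has rank one, this amounts to a $K$-linear endomorphism with $\nabla(fm)=\partial_t(f)m+f\nabla(m)$, and no integrability hypothesis intervenes, i.e. precisely a $\nabla$-module over $\ekd$. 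Finally, the commutativity of the square with right-hand vertical arrow $\partial_t(\sigma(t))\varphi$, established above from functoriality of the Gauss--Manin connection together with the fact that Frobenius on cohomology is induced by functoriality, is exactly the compatibility condition of Definition~\ref{modules}. Hence $H^i_\rig(X/\ekd,\sh{E})$ is a $\pn$-module over $\ekd$.

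For the last assertion I would argue as in Lemma~\ref{bijfrob}. Assume the base change map $H^i_\rig(X/\ekd,\sh{E})\otimes_{\ekd}\ek \to H^i_\rig(X/\ek,\hat{\sh{E}})$ is an isomorphism. The Amice ring $\ek$ is a field containing $\ekd$, hence faithfully flat over it; since $H^i_\rig(X/\ek,\hat{\sh{E}})$ is finite-dimensional over $\ek$ (by Kedlaya's finiteness theorem for rigid cohomology with coefficients in an overconvergent $F$-isocrystal, or already by the base change results of \cite{rclsf2} in the cases of interest), faithful flatness forces $H^i_\rig(X/\ekd,\sh{E})$ to be finite-dimensional over $\ekd$. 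The base change map is moreover compatible with the fixed Frobenii on $\ekd$ and $\ek$, so $\varphi^\sigma\otimes_{\ekd}\ek$ is identified with the usual Frobenius structure on $H^i_\rig(X/\ek,\hat{\sh{E}})$, which is bijective; faithful flatness of $\ek/\ekd$ then forces $\varphi^\sigma$ to be bijective, so the hypotheses of the first part hold.

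The compactly supported case is entirely parallel: the Gauss--Manin connection on $H^i_{c,\rig}(X/\ekd,\sh{E})$ and its compatibility with Frobenius were constructed above as well, the relevant base change statement is that of \cite{rclsf2}, and $\ek/\ekd$ being a field extension again transports finite-dimensionality and bijectivity of $\varphi^\sigma$ downwards. None of these steps is genuinely difficult, as the corollary merely repackages the earlier constructions; the point requiring the most care is the last assertion, precisely the observation that because $\ek$ is a faithfully flat field extension of $\ekd$, an isomorphism after base change already propagates both finiteness and bijectivity of the linear Frobenius from classical rigid cohomology down to the $\ekd$-valued theory.
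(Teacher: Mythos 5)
Your proposal is correct and follows exactly the route the paper intends: the corollary is stated without a separate proof precisely because it packages the preceding constructions (Gauss--Manin connection, Frobenius by functoriality, and the commutative square expressing their compatibility), with the ``in particular'' clause handled by the same faithful-flatness/base-change argument as Lemma~\ref{bijfrob}. Your additional remarks — that integrability is automatic since $\Omega^1_{\ekd/K}$ has rank one, and that finite-dimensionality descends from $H^i_\rig(X/\ek,\hat{\sh{E}})$ along the field extension $\ekd\subset\ek$ — are accurate and fill in the details the paper leaves implicit.
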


\begin{cor} Let $X/k\lser{t}$ be a smooth curve, $\sh{E}\in F\text{-}\mathrm{Isoc}^\dagger(X/K)$ and $i\geq0$. Then $H^i_\rig(X/\ekd,\sh{E})$ is a $(\varphi,\nabla)$-module over $\ekd$, and if $\cur{E}$ extends to an overconvergent $F$-isocrystal (relative to $\ekd$) on a smooth compactification $\overline{X}$ of $X$ then $H^i_{c,\rig}(X/\ekd,\cur{E})$ is also a $\pn$-module. In the latter case the Poincar\'{e} pairing
$$ H^i_\rig(X/\ekd,\cur{E}) \times H^{2-i}_{c,\rig}(X/\ekd,\cur{E}^\vee)\rightarrow \ekd(-1)
$$
is a perfect pairing of $\pn$-modules. \end{cor}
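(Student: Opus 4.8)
The plan is to deduce the three assertions from Corollary~\ref{pnmodst}, the finiteness, base change and Poincar\'e duality theorems for smooth curves proved in \cite{rclsf2}, the compatibility of the Gauss--Manin connection with the Poincar\'e pairing recorded above, and the standard duality formalism for $\pn$-modules over $\ekd$.

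First I would dispose of $H^i_\rig(X/\ekd,\sh{E})$. By the results of \cite{rclsf2}, for any $\sh{E}\in F\text{-}\mathrm{Isoc}^\dagger(X/\ekd)$ on a smooth curve the base change morphism $H^i_\rig(X/\ekd,\sh{E})\otimes_{\ekd}\ek \to H^i_\rig(X/\ek,\hat{\sh{E}})$ is an isomorphism; in particular $H^i_\rig(X/\ekd,\sh{E})$ is finite dimensional and, by Lemma~\ref{bijfrob}, the linearised Frobenius $\varphi^\sigma$ is bijective. Hence Corollary~\ref{pnmodst} applies and gives the $\pn$-module structure. For $H^i_{c,\rig}(X/\ekd,\cur{E})$ the hypothesis that $\cur{E}$ extends to an overconvergent $F$-isocrystal on $\overline{X}$ guarantees, by the results of \cite{rclsf2} (or, alternatively, via Poincar\'e duality together with the finiteness of ordinary cohomology), both finite dimensionality and bijectivity of the linear Frobenius on $H^i_{c,\rig}$, so the corresponding clause of Corollary~\ref{pnmodst} yields the $\pn$-module structure with compact supports.

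It then remains to see that the Poincar\'e pairing $H^i_\rig(X/\ekd,\cur{E})\times H^{2-i}_{c,\rig}(X/\ekd,\cur{E}^\vee)\to \ekd(-1)$ is perfect as a pairing of $\pn$-modules. Here $\ekd(-1)$ is the rank one $\pn$-module with trivial connection and Tate-twisted Frobenius, and for $\pn$-modules $M,N$ over $\ekd$ the internal hom $\Hom_{\ekd}(M,N)$ carries its usual $\pn$-module structure; so $\Hom_{\ekd}(H^{2-i}_{c,\rig}(X/\ekd,\cur{E}^\vee),\ekd(-1))$ is a $\pn$-module. The Poincar\'e duality theorem of \cite{rclsf2} says that the pairing induces an $\ekd$-linear isomorphism $H^i_\rig(X/\ekd,\cur{E}) \isomto \Hom_{\ekd}(H^{2-i}_{c,\rig}(X/\ekd,\cur{E}^\vee),\ekd(-1))$. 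We observed above that the pairing is compatible with the Gauss--Manin connections, and it is compatible with the Frobenius structures because all the cohomology groups involved carry the Frobenius induced by functoriality and the pairing is itself constructed functorially (this Frobenius-equivariance is already part of \cite{rclsf2}). Unwinding the definition of the $\pn$-structure on the internal hom, these two compatibilities say precisely that the displayed isomorphism respects $\varphi$ and $\nabla$; being bijective, it is therefore an isomorphism of $\pn$-modules, which is exactly the perfectness statement.

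I do not expect a serious obstacle: the substantive work was done in constructing the Gauss--Manin connection and in establishing finiteness, base change and duality in \cite{rclsf1,rclsf2}. The only points needing care are bookkeeping ones: checking that the normalisation of $\ekd(-1)$ and of the $\pn$-structure on $\Hom_{\ekd}(-,-)$ are consistent with the sign conventions in the commutative square of Definition~\ref{modules}, verifying that $\ekd(-1)$ really is a $\pn$-module (which is why its connection must be the trivial one, its Frobenius scalar being a constant), and propagating the extension hypothesis on $\cur{E}$ through the finiteness statement for compactly supported cohomology.
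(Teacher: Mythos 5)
Your proposal is correct and follows exactly the route the paper intends: the paper states this corollary without proof, as an immediate consequence of Corollary~\ref{pnmodst} combined with base change and finiteness for smooth curves from \cite{rclsf2}, and of the observation made just above it that the Poincar\'e pairing is compatible with the Gauss--Manin connections and (by functoriality) with Frobenius. Your filling-in of the details, including the treatment of compactly supported cohomology via the extension hypothesis and the reformulation of perfectness as an isomorphism of $\pn$-modules onto the internal dual, is precisely the intended argument.
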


\section{Weight monodromy and $\ell$-independence}\label{wmli}

As mentioned in the introduction, we expect that the cohomology theory $H^*_\rig(X/\ekd)$ should be more intimately connected with the arithmetic properties of $X$ than the usual rigid cohomology $H^*_\rig(X/\ek)$. In particular, it is using $H^*_\rig(X/\ekd)$ that we can formulate $p$-adic analogues of well-known conjectures and results concerning the $\ell$-adic cohomology of varieties over local fields. In this section we give some examples of the ways in which we expect to see the arithmetic properties of $X$ reflected in its cohomology $H^*_\rig(X/\ekd)$. In some sense these are really toy examples, and in fact don't use the full information contained in $H^*_\rig(X/\ekd)$, but only that which is preserved by base changing to $\rk$.

\begin{defn} For any smooth curve $X/k\lser{t}$ and any $\sh{E}\in F\text{-}\mathrm{Isoc}^\dagger(X/K)$ we define
$$ H^i_\rig(X/\cur{R}_K,\sh{E}):= H^i_\rig(X/\ekd,\sh{E})\otimes_{\ekd}\cur{R}_K
$$
by the results of previous sections, this is a free $(\varphi,\nabla)$-module over $\cur{R}_K$. We also define $H^i_\rig(X/\cur{R}_K)$ to be $H^i_\rig(X/\ekd)\otimes_{\ekd}\cur{R}_K$.
\end{defn}

In fact, a lot of our results do not depend on our construction of $H^i_\rig(X/\ekd)$, since as mentioned in the introduction to, Kedlaya in his thesis proved that when $X$ is smooth and proper over $k\lser{t}$, and $K=W(k)[1/p]$, then $H^i_\rig(X/\ek)$ is overconvergent, that is descends to a $(\varphi,\nabla)$-module over $\ekd$. This actually suffices to prove both $\ell$-independence and the weight monodromy conjecture in cohomological dimension $\leq 1$, at least for smooth and proper varieties. It is the general formalism of $\ekd$-valued rigid cohomology, however, that allows us to then easily deduce results about open curves from the statements in the smooth and proper case. Also, again as mentioned in the introduction, we expect that once a suitably robust formalism of $\ekd$-valued cohomology has been developed, it will provide the natural setting for the construction of a $p$-adic weight spectral sequence `explaining' the weight filtration on the cohomology of smooth and proper varieties with semistable reduction.

For the rest of this section, we will assume that $k$ is a finite field, and contrary to the assumptions of the rest of the paper, Frobenius $\sigma$ will be assumed to be the $p$-power Frobenius. We will let $q$ denote the cardinality of $k$. We will assume that $\cur{V}=W=W(k)$ is the ring of Witt vectors of $k$, we therefore have $K=W[1/p]$. We will also let $\cur{R}_K^+$ denote the plus part of the Robba ring, that is the ring of convergent functions on the open unit disc over $K$.

\subsection{Weil--Deligne representations and Marmora's construction}

One of the indications that $H^*_\rig(X/\ekd)$ is a good version of $p$-adic cohomology to look at over $k\lser{t}$ is the fact that by base changing to $\cur{R}_K$ as just described, we can attach Weil--Deligne representations to the cohomology of varieties $X/k\lser{t}$. This is done using Marmora's construction in \cite{marmora} of the Weil--Deligne representation attached to a $(\varphi,\nabla)$-module over $\cur{R}_K$, which we now recall.

Let us denote by $G_{k\lser{t}}$ and $G_k$ the absolute Galois groups of $k\lser{t}$ and $k$ respectively, we therefore have an exact sequence
$$ 0 \rightarrow I_{k\lser{t}} \rightarrow G_{k\lser{t}} \rightarrow G_k \rightarrow 0
$$
where $I_{k\lser{t}}$ is the inertia group. Define the Weil group $W_{k\lser{t}}$ to be the inverse image of $\langle\mathrm{Frob}_k \rangle \cong \Z \subset G_k \cong  \widehat\Z$, topologised so that $I_K$ is open. Here $\mathrm{Frob}_k$ refers to \emph{arithmetic} Frobenius, and we define the map $v: W_{k\lser{t}}\rightarrow \Z$ by $\mathrm{Frob}_k^{v(\sigma)}\equiv \sigma \text{ mod } I_k$. For any representation $V$ of $W_{k\lser{t}}$ we denote by $V(1)$ the representation on the same space but with the action of any $\sigma\in W_{k\lser{t}}$ multiplied by $q^{v(\sigma)}$. 

\begin{defn} Let $E$ be a field of characteristic zero. An $E$-valued Weil--Deligne representation over $k\lser{t}$ is a continuous representation of the Weil group $W_{k\lser{t}}$ in a finite dimensional, discrete $E$-vector space, together with a $G_{k\lser{t}}$-equivariant, nilpotent monodromy operator
$$ N: V(1)\rightarrow V.
$$
The category of $E$-valued Weil--Deligne representations is denoted $\mathrm{Rep}_{E}(\mathrm{WD}_{k\lser{t}})$. Note that since $I_{k\lser{t}}$ is a profinite group, it follows that it must act via a finite quotient on any Weil--Deligne representation.
\end{defn}

If $(V,N)$ is some Weil--Deligne representation over $k\lser{t}$, then there is a unique $W_{k\lser{t}}$-invariant filtration $M_\bu V$ on $V$, called the monodromy filtration, such that:
\begin{enumerate} 
\item $N(M_iV(1))\subset M_{i-2}V$;
\item  $N$ induces an isomorphism
$$  N^k: \mathrm{Gr}^M_kV(k) \isomto \mathrm{Gr}^M_{-k}V.
$$ 
\end{enumerate}

\begin{defn} Let $V$ be a Weil--Deligne representation over $k\lser{t}$. 
\begin{enumerate}
\item We say that $V$ is pure of weight $i$ if the eigenvalues of any lift to $W_{k\lser{t}}$ of $\mathrm{Frob}_k$ are Weil numbers of weight $-i$, i.e. are algebraic numbers all of whose complex embeddings have absolute value $q^{-i/2}$. 
\item We say that $V$ is quasi-pure of weight $i$ if for all $k$ the $k$th graded piece $\mathrm{Gr}_k^MV$ of the monodromy filtration are pure of weight $i+k$ in the above sense.
\end{enumerate}
\end{defn}

\begin{rem} \begin{enumerate} \item Since $I_{k\lser{t}}$ acts on $V$ by a finite quotient, the condition of being pure or quasi-pure can be checked on a single lift of arithmetic Frobenius.
\item What we have termed `quasi-pure of weight $i$' has been called by Scholl (unpublished) `pure of weight $i$', since it is conjecturally expected that the `geometric' weight filtration on the cohomology of varieties has graded pieces which are quasi-pure, rather than pure. We do not follow this terminology, since we want a way to distinguish pure and quasi-pure Weil--Deligne representations. 
\end{enumerate}
\end{rem}

Marmora's construction takes a $\pn$-module over $\rk$, and associates to it a Weil--Deligne representation with coefficients in $K^\mathrm{un}$, the maximal unramified extension of $K$, as follows.

Recall from \S2.2 of \cite{tsuzuki1} that for every finite separable extension $F/k\lser{t}$ there is a corresponding finite \'{e}tale map $\cur{R}_K \rightarrow \cur{R}_K^F$ where $\cur{R}_K^F$ is a copy of the Robba ring but with a different constant field and parameter. Define
$$\cur{B}_0 := \mathrm{colim}_{F} \cur{R}_K^F$$
to be the colimit of all $\cur{R}_K^F$ as $F$ runs over all finite extensions of $k\lser{t}$ within some fixed separable closure, and let $\cur{B}=\cur{B}_0[\log t]$, that is we adjoint a formal variable $\log t$, who's derivative is $\frac{dt}{t}$. Note that $\mathrm{G}_{k\lser{t}}$ acts naturally on $\cur{B}_0$, and hence on $\cur{B}$ by letting it act trivially on $\log t$, and there is a natural extension to $\cur{B}$ of the Frobenius $\sigma$ on $\rk$. $\cur{B}$ also admits a monodromy operator $N$, that is a $\cur{B}_0$-derivation such that $N(\log t)=1$.

By the $p$-adic local monodromy theorem, every $(\varphi,\nabla)$-module $M$ over $\cur{R}_K$ becomes trivial over $\cur{B}$, i.e. admits a basis of sections which are horizontal for the extended connection
$$ \nabla: M\otimes_{\cur{R}_K} \cur{B} \rightarrow M\otimes_{\cur{R}_K} \Omega^1_\cur{B}
$$
where $\Omega^1_\cur{B}=\cur{B}\cdot dt$. Note that we get a `diagonal' Frobenius structure on $M\otimes_{\cur{R}_K} \cur{B}$, as well as a monodromy operator and a $G_{k\lser{t}}$-action arising from the given monodromy and Galois actions on $\cur{B}$. Hence we get a functor
$$ M\mapsto \left(M\otimes_{\cur{R}_K} \cur{B}\right)^{\nabla=0}
$$
from the category of \emph{Deligne modules} over $K^\mathrm{un}$, that is vector spaces together with a monodromy operator, a Frobenius and a semilinear $G_{k\lser{t}}$-action, which satisfy certain natural compatibilities (see \S3.1 of \cite{marmora}). Composing with Fontaine's functor from Deligne modules to Weil--Deligne representations which `twists' the Galois action by the Frobenius to linearise it (see \S3.4 of \emph{loc. cit.}), we therefore get a functor
$$ \mathrm{WD}: \underline{\mathrm{M}\Phi}^\nabla_{\cur{R}_K} \rightarrow  \mathrm{Rep}_{K^\mathrm{un}}(\mathrm{WD}_{k\lser{t}}).
$$
from $\pn$-modules over $\cur{R}_K$ to $K^\mathrm{un}$-valued Weil--Deligne representations. In particular, we can talks about a $(\varphi,\nabla)$-module over $\cur{R}_K$ being pure or quasi-pure of some weight.

\subsection{The $p$-adic weight-monodromy conjecture in cohomological dimension 1}

We will now let $X/k\lser{t}$ be a smooth and proper variety, and denote by $H^i_\mathrm{cris}(X/\ek)$ its rational crystalline cohomology, as constructed by Ogus in \cite{ogus} (tensored from $\cur{O}_{\ek}$ to $\ek$) - this is a $(\varphi,\nabla)$ over $\ek$. Then Theorem 7.0.1 of \cite{kedthesis} says that this module is overconvergent, that is descends to a $(\varphi,\nabla)$-module $H^i_\mathrm{cris}(X/\ekd)$ over $\ekd$ (note that $H^i_\mathrm{cris}(X/\ekd)$ is well-defined by Kedlaya' full faithfulness result, i.e. Theorem 5.1 of \cite{kedfull}). Of course, conjecturally, we expect this to coincide with $H^i_\rig(X/\ekd)$, however, we currently only know that this is the case in dimension 1. 

\begin{defn} Define the $\cur{R}_K$-valued crystalline cohomology of $X$ by
$$ H^i_\mathrm{cris}(X/\cur{R}_K) = H^i_\mathrm{cris}(X/\ekd) \otimes_{\ekd} \cur{R}_K.
$$
\end{defn}

Then the main result of this section is then the following.

\begin{theo} \label{wmc} Let $X/k\lser{t}$ be a smooth and proper variety, of pure dimension $n$, and let $i\in \{0,1,2n-1,n\}$. Then $H^i_\mathrm{cris}(X/\cur{R}_K)$ is quasi-pure of weight $i$.
\end{theo}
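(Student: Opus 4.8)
The strategy is to reduce the statement to the known weight-monodromy results for crystalline cohomology in the semistable and good reduction cases, combined with de Jong's alterations to handle the general reduction type, and Grothendieck's $\ell$-adic-style weight arguments transported into the $p$-adic setting via Marmora's functor $\mathrm{WD}$. The degrees $i \in \{0, 1, 2n-1, n\}$ are exactly those for which the $\ell$-adic weight-monodromy conjecture is known unconditionally (by Hodge-index / hard Lefschetz type arguments for $i = 0, 1, 2n-1$, and for $i = 1$ by the curve case via the Picard variety), so the $p$-adic case should follow by the same mechanism once the dictionary between $H^i_\mathrm{cris}(X/\cur{R}_K)$ and Weil--Deligne representations is sufficiently functorial.

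First I would reduce to the semistable case. By de Jong's theorem, after a finite separable extension $F/k\lser{t}$ there is a proper surjective alteration $X' \to X_F$ with $X'$ having a semistable model over the ring of integers of $F$. Pulling back along $\cur{R}_K \to \cur{R}_K^F$ corresponds, under $\mathrm{WD}$, to restricting the Weil--Deligne representation to the open subgroup $W_F \subset W_{k\lser{t}}$, and purity/quasi-purity is insensitive to such restriction (since the inertia action factors through a finite quotient, as noted in the remark after the definition). The alteration induces, by functoriality of crystalline cohomology and hence of $\mathrm{WD}$, a map on Weil--Deligne representations; one shows $H^i(X_F)$ is a direct summand of $H^i(X')$ as Weil--Deligne representations, using a trace/transfer argument (the generic degree of the alteration is invertible in $K$). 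Since a subquotient of a quasi-pure representation of weight $i$ is quasi-pure of weight $i$ — this requires checking that the monodromy filtration is strictly compatible with the splitting, which follows from the characterization of $M_\bullet$ — it suffices to prove the theorem when $X$ has a semistable model $\mathfrak{X}/\cur{V}\pow{t}$.

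In the semistable case, the plan is to invoke the $p$-adic weight spectral sequence. Here one uses the comparison (Mokrane, Nakkajima, or in the relevant form via Kedlaya's overconvergence together with the log-crystalline computations) expressing $H^i_\mathrm{cris}(X/\ek)$ — and hence after descent $H^i_\mathrm{cris}(X/\ekd)$ and then $H^i_\mathrm{cris}(X/\cur{R}_K)$ — in terms of the log-crystalline cohomology of the special fibre, with $E_1$-page built from crystalline cohomology of the strata of the reduction (smooth proper varieties over $k$), on which Frobenius eigenvalues are Weil numbers of the expected weights by the Weil conjectures (Katz--Messing). The monodromy operator $N$ is the residue of the log-connection, and the weight spectral sequence degenerates at $E_2$ after $\otimes \cur{R}_K$ by a weight argument. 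For $i = 0, 2n-1$ the statement is essentially trivial ($N = 0$, pure of the right weight); for $i = 1$ one can alternatively bypass the spectral sequence entirely by reducing to the case of the Jacobian and using the known semistable-reduction theory of abelian varieties (the N\'eron--Ogg--Shafarevich-type results alluded to later in the paper), where $H^1_\mathrm{cris}$ of an abelian variety with semistable reduction is quasi-pure of weight $1$ by an explicit Dieudonn\'e-module computation; for $i = n$, hard Lefschetz for crystalline cohomology (Katz--Messing again, deduced from the $\ell$-adic case) forces the pairing $N^k: \mathrm{Gr}^M_k \isomto \mathrm{Gr}^M_{-k}$ to have the right purity.

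The main obstacle is the last step: ensuring that the $p$-adic weight spectral sequence is available in the precise form needed — i.e. that the abstract $(\varphi,\nabla)$-module $H^i_\mathrm{cris}(X/\ekd)$ produced by Kedlaya's overconvergence theorem genuinely carries the monodromy filtration coming from the geometry of the special fibre, and that $\mathrm{WD}$ applied to it recovers the Weil--Deligne representation built from that spectral sequence. This is where the paper's remark that ``a full treatment will necessarily involve a $p$-adic vanishing cycles formalism and the construction of a $p$-adic weight spectral sequence'' is doing the real work; for the four degrees in question one expects to be able to sidestep the general construction by the hard-Lefschetz and abelian-variety shortcuts above, so that only the weight bounds on Frobenius eigenvalues (Katz--Messing) and the formal properties of the monodromy filtration are needed, rather than the full spectral sequence.
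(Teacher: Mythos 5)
Your high-level outline for $i=1$ --- reduce to an abelian variety (the paper uses the Albanese rather than the Picard variety, but it is the same idea), pass to the semistable case after a finite separable extension, and compute with the Dieudonn\'{e} module of its $p$-divisible group --- is exactly the route the paper takes, and Poincar\'{e} duality disposes of $i=2n-1$ and $i=2n$ as you suggest. But the step you relegate to ``an explicit Dieudonn\'{e}-module computation'' is the entire content of the proof, and it is not routine. One must show that Grothendieck's filtration $0\subset D^t(A)\subset D^f(A)\subset D(A)$ from SGA 7 I, descended to $\ekd$ via Kedlaya's overconvergence and de Jong's equivalence and then base-changed to $\cur{R}_K$, coincides (after applying $\mathrm{WD}$, up to a shift) with the monodromy filtration. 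This requires: (a) Proposition \ref{key1}, that $D^f_{\cur{R}}(A)$ is the \emph{maximal} sub-$\pn$-module of $D_{\cur{R}}(A)$ defined over $\cur{R}_K^+$, which rests on a chain of nontrivial descent statements (Marmora's Corollaire 3.2.27, descent from $\cur{R}_K^+$ to $S_K$ and from $S_K$ to $W\pow{t}$); (b) the orthogonality theorem of SGA 7 I applied to the dual abelian variety to control the top graded piece; and (c) an explicit basis (Proposition \ref{key2}) in which the monodromy operator is visibly square-zero with image inside $\mathrm{WD}(W_{-2})$ and kernel exactly $\mathrm{WD}(W_{-1})$. None of this appears in your proposal, so for $i=1$ you have stated the strategy rather than given a proof.

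The other routes you propose do not work here. The $p$-adic weight spectral sequence in the form you would need --- attached to the overconvergent object $H^i_\cris(X/\ekd)$ and compatible with $\mathrm{WD}$ --- is precisely what the paper says is \emph{not} yet available and defers to future work, so it cannot be invoked; and de Jong's alterations plus a trace argument are unnecessary for $i\leq 1$ (semistable reduction of the Albanese after a finite extension suffices) and would require compatibilities of the transfer map with the $\ekd$-structure and with $\mathrm{WD}$ that you have not checked. Most seriously, your treatment of ``$i=n$'' is wrong: hard Lefschetz does not yield weight-monodromy in middle degree --- that is exactly the open case of the conjecture. (The ``$n$'' in the statement is evidently a typo for ``$2n$''; compare the corollary in the $\ell$-independence section. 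So no middle-degree case needs to be treated, but the argument you offer for it would not prove it.)
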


\begin{cor} Let $X/k\lser{t}$ be a smooth and proper curve, and $i\geq0$. Then $H^i_\rig(X/\cur{R}_K)$ is quasi-pure of weight $i$.
\end{cor}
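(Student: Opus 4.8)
The plan is to deduce the corollary from Theorem \ref{wmc} by reducing the case of a smooth proper curve to the case $i \in \{0,1,2n-1,n\}$ treated there, and then reconciling crystalline and rigid cohomology in dimension $1$. First I would observe that a smooth proper curve has pure dimension $n=1$, so $2n-1 = 1 = n$ and the set $\{0,1,2n-1,n\}$ collapses to $\{0,1\}$; together with the vanishing $H^i_\mathrm{cris}(X/\cur{R}_K)=0$ for $i<0$ and $i>2n=2$, and Poincar\'e duality identifying $H^2_\mathrm{cris}(X/\cur{R}_K)$ with a Tate twist of $H^0_\mathrm{cris}(X/\cur{R}_K)$, this shows that Theorem \ref{wmc} already covers every cohomological degree $i\geq 0$ for a curve. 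Thus $H^i_\mathrm{cris}(X/\cur{R}_K)$ is quasi-pure of weight $i$ for all $i$.

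The remaining point is to replace $H^i_\mathrm{cris}(X/\cur{R}_K)$ by $H^i_\rig(X/\cur{R}_K)$. As remarked in the text just before the statement, we currently only know that the overconvergent crystalline cohomology $H^i_\mathrm{cris}(X/\ekd)$ (obtained by Kedlaya's overconvergence theorem, Theorem 7.0.1 of \cite{kedthesis}, together with his full faithfulness result, Theorem 5.1 of \cite{kedfull}) coincides with $H^i_\rig(X/\ekd)$ in dimension $1$. So for $X$ a smooth proper curve there is a canonical isomorphism $H^i_\mathrm{cris}(X/\ekd)\cong H^i_\rig(X/\ekd)$ of $(\varphi,\nabla)$-modules over $\ekd$, and tensoring up along $\ekd \hookrightarrow \cur{R}_K$ gives an isomorphism $H^i_\mathrm{cris}(X/\cur{R}_K)\cong H^i_\rig(X/\cur{R}_K)$ of $(\varphi,\nabla)$-modules over $\cur{R}_K$. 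Since the property of being quasi-pure of weight $i$ is a property of the associated Weil--Deligne representation under Marmora's functor $\mathrm{WD}$, and $\mathrm{WD}$ is a functor, isomorphic $(\varphi,\nabla)$-modules over $\cur{R}_K$ have isomorphic — hence equally quasi-pure — Weil--Deligne representations. Combining this with the previous paragraph gives the result.

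The only genuine input beyond bookkeeping is the comparison $H^i_\mathrm{cris}(X/\ekd)\cong H^i_\rig(X/\ekd)$ for curves, which I expect to be the main obstacle to write cleanly; it rests on comparing Ogus's rational crystalline cohomology over $\ek$ with the classical rigid cohomology $H^i_\rig(X/\ek)$ (standard for smooth proper varieties), noting that both descend to $\ekd$ — the former by Kedlaya's theorem, the latter by the base change result for smooth curves from \cite{rclsf2} quoted in Section \ref{absgm} — and then invoking full faithfulness of the restriction functor from $(\varphi,\nabla)$-modules over $\ekd$ to those over $\ek$ to conclude that the two descents agree. Everything else (the degree collapse for $n=1$, Poincar\'e duality in degree $2$, functoriality of $\mathrm{WD}$) is formal.
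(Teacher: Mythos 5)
Your proposal is correct and is precisely the argument the paper leaves implicit: for a curve the degrees covered by Theorem \ref{wmc} (together with Poincar\'e duality and vanishing in degrees $>2$) exhaust all $i\geq 0$, and the identification $H^i_\mathrm{cris}(X/\ekd)\cong H^i_\rig(X/\ekd)$ for smooth proper curves via base change to $\ek$ and Kedlaya's full faithfulness is exactly the comparison the authors invoke when they say the two theories are known to coincide in dimension $1$. Nothing further is needed.
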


We start by recalling the notion of a Dieudonn\'{e} module over versions of the rings $S_K,\ekd,\ek$ in which $p$ is not invertible. 

\begin{defn} Let $R$ be one of the rings $W\pow{t},\cur{O}_{\ekd},\cur{O}_{\ek}$. Then a Dieudonn\'{e} module over $R$ is a finite free $R$ module $M$, together with a topologically quasi-nilpotent connection $\nabla:M\rightarrow M\otimes_R \Omega^1_R$ and horizontal morphisms
$ F: \sigma^*M\rightarrow M$, $V:M\rightarrow \sigma^*M$ such that $FV=p\mathrm{id}_M$ and $VF=p\mathrm{id}_{\sigma^*M}$. The category of such objects is denoted $\mathrm{\underline{MD}}_{R}$
\end{defn}

\begin{rem} Here $\Omega^1_R$ is simply defined to be the free $R$-module on $dt$, with canonical derivation $d:R\rightarrow \Omega^1_R$.
\end{rem}

Then thanks to the Main Theorem on page 6 of \cite{dej1}, we have equivalences of categories
\begin{align*} D:\mathrm{BT}_{k\pow{t}} &\isomto \mathrm{\underline{MD}}_{W\pow{t}} \\
D:\mathrm{BT}_{k\lser{t}} &\isomto \mathrm{\underline{MD}}_{\cur{O}_{\ek}} 
\end{align*}
between $p$-divisible groups over $k\pow{t}$ (resp. $k\lser{t}$) and Dieudonn\'{e} modules over $W\pow{t}$ (resp. $\cur{O}_{\ek}$), which commute with the natural base change functors.

To prove Theorem \ref{wmc}, note that by Poincar\'{e} duality in crystalline cohomology it suffices to treat the cases $i=0,1$, and the case $i=0$ is clear, hence the only real content is the case $i=1$. To deal with this case, we introduce the Albanese variety $A$ of $X$, so that there is an isomorphism
$$ H^1_\mathrm{cris}(X/\ek) \cong D(A)[1/p]^\vee
$$ 
between the first crystalline cohomology of $X$ and the dual of the rational Dieudonn\'{e} module of the $p$-divisible group of $A$. Theorem 7.01. of \cite{kedthesis} shows that $D(A)$ is oveconvergent, i.e. comes from a unique Dieudonn\'{e} module $D^\dagger(A)$ over $\cur{O}_{\ekd}=\cur{R}_K^\mathrm{int}$. Denote by $D_{\cur{R}}(A)$ its base change to $\cur{R}_K$. Then Theorem \ref{wmc} is a consequence of the following result. 

\begin{theo}\label{wmc2} The $(\varphi,\nabla)$-module $D_{\cur{R}}(A)$ over $\cur{R}_K$ is quasi-pure of weight $-1$.
\end{theo}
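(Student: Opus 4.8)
The plan is to reduce the statement about the $(\varphi,\nabla)$-module $D_\cur{R}(A)$ to the structure theory of $p$-divisible groups over $k\lser{t}$, which is governed by Grothendieck's results in \cite{sga719}. Recall that $A$ is the Albanese variety of the smooth and proper variety $X$, so in particular $A$ is an abelian variety over $k\lser{t}$. By the semistable reduction theorem for abelian varieties, after replacing $k\lser{t}$ by a finite separable extension $F$ (which only has the effect of base changing $D_\cur{R}(A)$ along the finite \'{e}tale map $\cur{R}_K\rightarrow \cur{R}_K^F$, and purity/quasi-purity is insensitive to such base change since it can be checked on a single lift of Frobenius after restricting to an open subgroup of the Weil group), we may assume that $A$ has semistable reduction. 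Then the connected $p$-divisible group $A[p^\infty]$ sits in an exact sequence
$$ 0 \rightarrow T \rightarrow A[p^\infty] \rightarrow B \rightarrow 0 $$
where $T$ is a torus part (of multiplicative type) and $B$ is the $p$-divisible group of an abelian scheme over $k\pow{t}$, i.e. has good reduction. Applying the Dieudonn\'{e} functor $D^\dagger$ and base changing to $\cur{R}_K$ gives a corresponding exact sequence of $(\varphi,\nabla)$-modules, and I would check that this sequence, after applying Marmora's functor $\mathrm{WD}$, realises the monodromy filtration: the toric part contributes the pieces on which $N$ acts nontrivially.

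The key computation is then the following. First, for $B$ coming from an abelian scheme over $k\pow{t}$, the Dieudonn\'{e} module $D^\dagger(B)$ extends to (the $\cur{R}_K^+$-analogue of) a Dieudonn\'{e} module over $W\pow{t}$, using the equivalence $D:\mathrm{BT}_{k\pow{t}}\isomto \mathrm{\underline{MD}}_{W\pow{t}}$ quoted above; such modules are \emph{unipotent} (the connection is topologically quasi-nilpotent, and there is no log pole), so under $\mathrm{WD}$ the monodromy operator $N$ vanishes and the representation is unramified. Its Frobenius eigenvalues are the Frobenius eigenvalues on $H^1_\mathrm{cris}$ of the special fibre abelian variety over $k$, which are Weil numbers of weight $1$, i.e. (after dualising, since we are working with $D_\cur{R}(A)$ rather than its dual) weight $-1$; so the pure part is pure of weight $-1$, as required. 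Second, for the toric part $T$, the Dieudonn\'{e} module is a successive extension of copies of the `Tate twist' object, whose associated Weil--Deligne representation is $K^\mathrm{un}(1)$ (respectively $K^\mathrm{un}(-1)$), contributing the $\mathrm{Gr}^M_{\pm 1}$ pieces; these are pure of the correct weights $-1\pm1 = 0, -2$, which matches the definition of quasi-pure of weight $-1$. Assembling: $N$ induces the required isomorphisms $\mathrm{Gr}^M_1 \isomto \mathrm{Gr}^M_{-1}$, and each $\mathrm{Gr}^M_k$ is pure of weight $-1+k$, which is exactly the assertion of Theorem \ref{wmc2}.

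The main obstacle, and the step requiring the most care, is the compatibility between Grothendieck's monodromy pairing / orthogonality theorem on the $\ell$-adic (or Tate-module) side and the monodromy filtration produced by Marmora's functor $\mathrm{WD}$ on the $p$-adic side. Concretely, I need to know that the filtration on $D_\cur{R}(A)$ coming from the toric--good-reduction filtration of the $p$-divisible group really is \emph{the} monodromy filtration $M_\bu$ attached to $(\mathrm{WD}(D_\cur{R}(A)), N)$, and that the weights come out right. This is where one invokes (as the authors signal in the introduction) the theorems of Kedlaya and de Jong identifying $p$-divisible groups with Dieudonn\'{e} modules over these Robba-type rings, together with a careful bookkeeping of Tate twists and the sign/direction of the weight (note the weight $-1$ rather than $+1$, coming from the passage between $H^1_\mathrm{cris}$ and $D(A)[1/p]$ via the isomorphism $H^1_\mathrm{cris}(X/\ek)\cong D(A)[1/p]^\vee$). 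Once this dictionary is in place, the purity of the graded pieces is essentially the Weil conjectures for the special fibre (for the good-reduction part) plus an explicit description of the multiplicative-type part, both of which are standard.
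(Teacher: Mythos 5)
Your overall strategy (reduce to the semistable case, use the filtration of the $p$-divisible group coming from the N\'{e}ron model, identify the graded pieces with cohomology of the toric and abelian parts of the special fibre, and invoke orthogonality) is the same as the paper's, but there are two genuine gaps. First, the filtration you use is wrong as stated: in the semistable case $A[p^\infty]$ does \emph{not} sit in an extension $0\to T\to A[p^\infty]\to B\to 0$ with $B$ of good reduction. The correct structure is the three-step filtration $G^t\subset G^f\subset G$, where $G^f$ is the part extending to $k\pow{t}$ and $G^t$ its maximal multiplicative sub; the graded pieces are the toric part (weight $-2$), the abelian part $B_0$ (weight $-1$), and the \'{e}tale quotient $G/G^f$ (weight $0$), the last of which is identified with $(D^t(A^\vee))^\vee(1)$ only via Grothendieck's orthogonality theorem applied to the \emph{dual} abelian variety. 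You conflate the weight-$0$ and weight-$-2$ pieces into ``the toric part,'' which both miscounts the filtration and hides the essential role of $A^\vee$.

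Second, and more seriously, the step you yourself flag as ``the main obstacle'' --- that the filtration coming from the $p$-divisible group really is the monodromy filtration of $\mathrm{WD}(D_\cur{R}(A))$ --- is the actual content of the theorem, and you do not prove it; citing ``the theorems of Kedlaya and de Jong'' is not a substitute. The paper's argument has three ingredients you are missing. (i) A maximality statement: $D^f_\cur{R}(A)$ is the \emph{largest} sub-$\pn$-module of $D_\cur{R}(A)$ defined over $\cur{R}_K^+$. This is proved by descending any constant submodule $N\subset D_\cur{R}(A)$ first to a submodule over $\ekd$ (Marmora, Corollaire 3.2.27), then to $S_K$, then intersecting with the integral lattice and descending to $W\pow{t}$, whence it lies in $D^{\dagger,f}(A)$ by the characterisation of $G^f$. (ii) An explicit adapted basis in which $D=t\nabla$ kills the first two blocks and sends the last block into the $K$-span of the first, which is what shows $N^2=0$ and computes $\ker N$ and $\mathrm{im}\,N$. (iii) Without (i) one only gets the containments $M_{-1}\subset \mathrm{WD}(W_{-2})$ and $M_0\supset \mathrm{WD}(W_{-1})$; the equalities $M_0=\mathrm{WD}(W_{-1})$ and (applying (i) to $A^\vee$ together with orthogonality) $M_{-1}=\mathrm{WD}(W_{-2})$ are exactly where the maximality is used, and the degenerate case $N=0$ has to be handled separately. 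As written, your proposal establishes purity of the graded pieces of the weight filtration but not that this filtration is the monodromy filtration, so quasi-purity does not follow.
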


We will demonstrate this by explicitly describing the weight filtration on $D_\cur{R}(A)$ in the semistable case, using results from \cite{sga719}, and then show that this actually coincides with the monodromy filtration. Fist, however, we need a couple of technical results on $(\varphi,\nabla)$-modules and Dieudonn\'{e} modules.

\begin{lem} \label{deflem} \begin{enumerate}
\item Let $X$ be be an invertible $n\times n$ matrix over $\cur{R}_K$. Then there exist invertible $n\times n$ matrices $Y$ over $\ekd$ and $Z$ over $\cur{R}_K^+$ such that $X =YZ$.
\item Let $X$ be be an invertible $n\times n$ matrix over $\ek$. Then there exist invertible $n\times n$ matrices $Y$ over $\cur{O}_{\ek}$ and $Z$ over $K$ such that $X =YZ$.
\end{enumerate}
\end{lem}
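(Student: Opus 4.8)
The plan is to prove (i) first and deduce (ii) by an essentially identical argument, so I concentrate on (i). The ring $\cur{R}_K$ is the union of annuli $\{\rho\le|t|<1\}$, and a global invertible matrix $X$ over $\cur{R}_K$ has entries that converge on some half-open annulus $r\le|t|<1$; its determinant, being a unit in $\cur{R}_K$, is a unit in the corresponding annulus, so by multiplying by a suitable power of $t$ we may assume $\det X$ has no zeros or poles in $0<|t|<1$ (invertibility in $\cur{R}_K$ forces the Newton polygon of $\det X$ to have a single slope near the boundary, and after twisting by $t^{m}$ we may take it to be a unit in $\cur{R}_K^+[1/t]$). The strategy is then a Birkhoff-type factorisation: I want to peel off from $X$, working near $|t|=1$, a matrix $Z$ holomorphic and invertible on the \emph{whole} open disc (an element of $\GL_n(\cur{R}_K^+)$), leaving $Y=XZ^{-1}$ with entries that extend to invertible functions on a slightly larger annulus — i.e.\ an element of $\GL_n(\ekd)$ — since $\ekd$ consists exactly of those bounded functions convergent on $r\le|t|<1$ for some $r<1$.

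The key steps, in order. First, I would reduce to the case where $X$ is already close to the identity on the annulus $r\le|t|<1$: using that $\GL_n$-valued functions can be approximated, write $X = X_0 (1+N)$ where $X_0$ has entries in $\cur{R}_K^+[1/t]$ (polynomial in $t,t^{-1}$, hence invertible over $\cur{R}_K^+$ after the twist above) and $N$ has small sup-norm on $r\le|t|<1$. Second, for the correction term $1+N$ I would run an iterative (Newton/successive-approximation) scheme: decompose $N = N^- + N^+$ where $N^-$ collects the principal parts (the strictly negative Laurent coefficients) and $N^+$ the holomorphic parts, both with controlled norms; then $1+N = (1+N^-)(1+\tilde N)(1+N^+)$ with $\tilde N$ of strictly smaller norm, and iterate. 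The $(1+N^-)$ factors accumulate to an element of $\GL_n(\ekd)$ — in fact of $1+M_n(t^{-1}\cur{V}[[t^{-1}]]\cap\ekd)$, where the relevant series converge because $\ekd$ is $\pi$-adically complete and the norms shrink — while the $(1+N^+)$ factors accumulate to an element of $\GL_n(\cur{R}_K^+)$. Multiplying back in $X_0$ (on the $\ekd$ side) gives $X = YZ$ as required. Third, for (ii): the Amice ring $\ek$ is the completion of $\ekd$ and $\cur{O}_{\ek}$ is its ring of integers with $\cur{O}_{\ek}/\pi = k\lser{t}$; an invertible matrix $X$ over $\ek$ has entries of bounded norm, so after scaling lies in $M_n(\cur{O}_{\ek})$, and I would run the same successive-approximation scheme but now splitting off the ``constant in $t$'' part — i.e.\ reducing mod $t$-adic and then $\pi$-adic filtrations — to extract $Z\in\GL_n(K)$ with $Y = XZ^{-1}\in\GL_n(\cur{O}_{\ek})$, using completeness of $\cur{O}_{\ek}$.

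I expect the main obstacle to be making the iteration genuinely converge: one must track two competing norms (the radius of convergence, which degrades at each twist-and-split step, and the $\pi$-adic or sup size of the correction, which must shrink fast enough to beat it) and ensure the infinite products $\prod(1+N^\pm_j)$ land in $\GL_n(\ekd)$ and $\GL_n(\cur{R}_K^+)$ respectively rather than merely in $\GL_n(\cur{R}_K)$. Concretely, the delicate point is that the principal-part factors $1+N^-_j$ must all be supported on a \emph{fixed} annulus of convergence (not one shrinking to the boundary), which is why the preliminary reduction to $X = X_0(1+N)$ with $N$ small on a \emph{prescribed} annulus $r\le|t|<1$ is essential — once that is in place, the negative-degree parts of everything manufactured in the iteration automatically converge on that same annulus, so their product is a bona fide element of $\ekd$. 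This is a standard but somewhat fiddly Birkhoff-factorisation estimate over a non-archimedean base; I would cite the analogous decompositions in Kedlaya's work on the local monodromy theorem (e.g.\ the ``Dieudonn\'e--Manin'' style factorisations over the Robba ring) if a clean reference is available, and otherwise carry out the two-norm bookkeeping directly.
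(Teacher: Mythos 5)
The paper offers no argument here at all: part (i) is quoted as Proposition 6.5 of \cite{padicmono} (Kedlaya's $p$-adic local monodromy theorem paper, i.e.\ exactly the reference you guessed at), and part (ii) as Proposition 2.18 of \cite{ambrusmono}. So the only meaningful comparison is between your sketch and the proofs of the cited results.

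For (i) your outline is essentially Kedlaya's argument and is sound. Two corrections, though. The preliminary ``multiply by $t^m$ so that $\det X$ has no zeros or poles in $0<|t|<1$'' is neither meaningful (an element of $\cur{R}_K$ lives only on some annulus $r\le|t|<1$) nor needed: $\ekd$ is a field, so any sufficiently close truncation $X_0$ of $X$ has nonzero determinant and hence lies in $\GL_n(\ekd)$ automatically, which is all you require since $X_0$ is absorbed into the $\ekd$ factor. And the ``two competing norms'' you flag as the main obstacle are not actually in competition: once the annulus $[r,1)$ is fixed, the plus/minus splitting is nonincreasing for every Gauss norm $\norm{\cdot}_\rho$ with $\rho\in[r,1)$, the minus parts are automatically bounded (hence lie in $\ekd$ with the same $r$), and the plus parts automatically converge on the whole open disc; no radius degrades during the iteration.

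The genuine gap is (ii). The proposed ``split off the constant-in-$t$ part'' iteration has no contraction: $t$ is a unit in $\cur{O}_{\ek}$ (its residue field is $k\lser{t}$, not $k\pow{t}$), so the non-constant part of an element of $\cur{O}_{\ek}$ is in no useful sense small, and the successive approximations do not converge. Worse, no argument of this shape can succeed, because the bare identity $\GL_n(\ek)=\GL_n(\cur{O}_{\ek})\cdot\GL_n(K)$ fails. Take $X=\bigl(\begin{smallmatrix}1&t\\1&t+\pi\end{smallmatrix}\bigr)$, so $\det X=\pi$. If $X=YZ$ with $Y\in\GL_2(\cur{O}_{\ek})$ and $Z\in\GL_2(K)$, then $v_\pi(\det Z)=1$, so writing the Smith normal form $Z=U\,\mathrm{diag}(\pi^a,\pi^b)\,V$ over $W$ with $a+b=1$, at least one of $a,b$ is $\geq 1$ and the corresponding column of $XV^{-1}$ must be divisible by $\pi$ in $\cur{O}_{\ek}$. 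That column is $\alpha(1,1)^{T}+\beta(t,t+\pi)^{T}$ with $(\alpha,\beta)\in W^2$ a column of $V^{-1}$, hence primitive; divisibility of its first entry $\alpha+\beta t$ by $\pi$ would give a nontrivial $k$-linear relation between $1$ and $t$ in $k\lser{t}$, which does not exist. So either the cited Proposition 2.18 carries hypotheses not reproduced in the statement here, or the second factor should live in $\GL_n(S_K)$ rather than $\GL_n(K)$ --- which is all the application in Proposition \ref{defprop}(ii) (descent from $\cur{O}_{\ek}$ to $W\pow{t}$) requires --- and in either case the proof needs an input genuinely different from your iteration.
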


\begin{proof} \begin{enumerate}
\item This is Proposition 6.5 of \cite{padicmono}. 
\item This is Proposition 2.18 of \cite{ambrusmono}\end{enumerate}

\end{proof}

\begin{prop} \label{defprop} \begin{enumerate}
\item Let $M$ be a $(\varphi,\nabla)$-module over $\ekd$, and suppose that $M\otimes_{\ekd} \cur{R}_K$ is defined over $\cur{R}_K^+$. Then $M$ is defined over $S_K$. 
\item Let $M$ be a Dieudonn\'{e} module over $\cur{O}_{\ek}$, and suppose that $M\otimes_{\cur{O}_{\ek}} \ek$ is defined over $S_K$. Then $M$ is defined over $W\pow{t}$. 
\end{enumerate}
\end{prop}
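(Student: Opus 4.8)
The plan is to deduce both parts from the corresponding matrix factorisation in Lemma~\ref{deflem} by a descent argument on a well-chosen basis, then use the commutativity of the connection (resp.\ the $F,V$ maps) with the base-change to propagate the structure back down. I treat part (1) first; part (2) is entirely parallel, replacing the pair $(\ekd,\cur{R}_K^+)$ by $(\cur{O}_{\ek},K)$, the Robba ring by the Amice ring, and the $(\varphi,\nabla)$-structure by the Dieudonn\'{e} structure $(\nabla,F,V)$.

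For part (1), fix a basis $e_1,\ldots,e_n$ of $M$ over $\ekd$; it induces a basis of $M_\cur{R}:=M\otimes_{\ekd}\cur{R}_K$. By hypothesis there is a $\cur{R}_K^+$-lattice $M^+\subset M_\cur{R}$ with $M^+\otimes_{\cur{R}_K^+}\cur{R}_K=M_\cur{R}$ which is stable under $\varphi$ and $\nabla$; choose a basis $f_1,\ldots,f_n$ of $M^+$. Writing the change of basis as $f_j=\sum_i X_{ij}e_i$, the matrix $X$ is invertible over $\cur{R}_K$, so by Lemma~\ref{deflem}(1) we may write $X=YZ$ with $Y$ invertible over $\ekd$ and $Z$ invertible over $\cur{R}_K^+$. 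Replacing $e_i$ by $e'_i:=\sum_k Y^{-1}_{ki}\cdot(\text{appropriate combination})$ — more precisely, setting $e'_i=\sum_k (Y)_{ik}e_k$ does not change the $\ekd$-span, and then $f_j=\sum_i Z_{ij}e'_i$ — we obtain a basis of $M$ over $\ekd$ which, after base change, is a $\cur{R}_K^+$-basis of $M^+$. In this common basis the matrices of $\varphi$ and of $\nabla$ (i.e.\ the matrix $G$ with $\nabla e'_i=\sum_j G_{ji}e'_j\otimes dt$, and similarly the matrix of $\varphi$) have entries in $\cur{R}_K^+\cap\ekd$. The key point is then the elementary ring-theoretic fact that
$$\cur{R}_K^+\cap\ekd=S_K$$
inside $\cur{R}_K$: a bounded power series which converges on the whole open unit disc (with bounded coefficients) is exactly a power series with coefficients in $\cur{V}$ tensored with $K$. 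Granting this, the connection and Frobenius matrices lie in $S_K$, so the $S_K$-span $M_0:=\bigoplus_i S_K\,e'_i$ is a $(\varphi,\nabla)$-module over $S_K$ with $M_0\otimes_{S_K}\ekd\cong M$, which is the assertion.

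The main obstacle — and the only place where genuine care is needed — is verifying that the common basis can actually be chosen so that \emph{both} the Frobenius matrix and the connection matrix simultaneously land in $\cur{R}_K^+\cap\ekd$, i.e.\ that no residual denominators or convergence defects survive the factorisation $X=YZ$. This reduces to the observation that $\varphi$ and $\nabla$ are already defined on $M$ over $\ekd$ (so their matrices in the $e_i$-basis, hence in the $e'_i$-basis, are \emph{a priori} over $\ekd$) and are also defined on $M^+$ over $\cur{R}_K^+$ (so in the $f_j$-basis, hence again in the $e'_i$-basis after the $\cur{R}_K^+$-linear change $Z$, they are over $\cur{R}_K^+$); thus their $e'_i$-matrices lie in the intersection automatically. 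One must also check the intersection statement $\cur{R}_K^+\cap\ekd=S_K$, which is immediate from the definitions of the three rings recalled in \cite{rclsf1}: boundedness of coefficients plus convergence on all of the open disc forces integrality of the coefficients. For part (2) the same argument applies verbatim using Lemma~\ref{deflem}(2), the identity $K\cap\cur{O}_{\ek}=\cur{V}$ inside $\ek$ (equivalently $S_K\cap\cur{O}_{\ek}=W\pow{t}$ inside $\ek$), and the fact that the horizontal maps $F$ and $V$ are, like $\varphi$, defined integrally on both sides, so that their matrices in the descended basis have coefficients in the intersection $W\pow{t}$; the relations $FV=p\,\iden$, $VF=p\,\iden$ and horizontality are preserved under restriction of scalars, giving a Dieudonn\'{e} module over $W\pow{t}$ as required.
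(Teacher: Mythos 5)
Your argument is correct, and it is worth noting that the paper itself does not prove this proposition at all: it simply cites Theorem 2.10 and Proposition 2.19 of the reference \cite{ambrusmono}, with Lemma \ref{deflem} recorded beforehand precisely because it is the engine of those proofs. Your reconstruction is therefore essentially the intended argument: use the factorisation $X=YZ$ to replace the given $\ekd$-basis (resp.\ $\cur{O}_{\ek}$-basis) by one that is simultaneously a basis of the model over $\cur{R}_K^+$ (resp.\ over $S_K$), observe that the matrices of all the structure maps then lie in the intersection of the two coefficient rings, and identify that intersection as $S_K$ (resp.\ $W\pow{t}$). Your identification $\ekd\cap\cur{R}_K^+=S_K$ is right for the right reason (boundedness of coefficients kills denominators, membership in $\cur{R}_K^+$ kills negative powers), and the point you flag as the ``only place where genuine care is needed'' is handled correctly: since $\nabla$, $\varphi$ (resp.\ $\nabla$, $F$, $V$) preserve both lattices, their matrices in the common basis automatically lie in the intersection. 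Two small things to tidy: in part (2) Lemma \ref{deflem}(2) produces $Z$ invertible over $K$, not over $S_K$, so you should say explicitly that $K\subset S_K$ (the identity you actually need is $\cur{O}_{\ek}\cap S_K=W\pow{t}$, not $K\cap\cur{O}_{\ek}=\cur{V}$); and for the descended object to be a $\varphi$-module (resp.\ Dieudonn\'{e} module) you should add the one-line remark that the Frobenius matrix is invertible over the intersection ring because its inverse likewise has entries in both rings. The index bookkeeping in your basis change ($e'=eY$, $f=e'Z$) is slightly garbled as written but harmless.
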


\begin{proof} This is Theorem 2.10 and Proposition 2.19 of \cite{ambrusmono}.
\end{proof}

Now, let $\cur{A}$ be the N\'{e}ron model of $A$ over $k\pow{t}$, with identity component $\cur{A}^\circ$ and special fibre $\cur{A}_0$. Let $\cur{A}_0^\circ$ denote the identity component of $ \cur{A}_0$. Since Theorem \ref{wmc} is insensitive to replacing $k\lser{t}$ by a finite extension, we may assume that $A$ has semistable reduction, in other words that $\cur{A}_0^\circ$ is an extension of an abelian variety $B_0$ over $k$ by a torus $T_0$. Let $\hat{\cur{A}}$ denote the formal completion of $\cur{A}$ along its special fibre, and $\hat{T}\subset \hat{\cur{A}}$ its maximal formal sub-torus, as in \S5.1 of \cite{sga719}. Write 
\begin{align*} G&=A[p^\infty] \\ G^f&= \hat{\cur{A}}[p^\infty]\otimes_{k\pow{t}}k\lser{t} \\ G^f&= \hat{T}[p^\infty]\otimes_{k\pow{t}}k\lser{t}
\end{align*} so that we have a filtration 
$$ 0\subset G^t\subset G^f\subset G
$$
of $p$-divisible groups over $k\lser{t}$, moreover, it follows from \S2.3.3 of \cite{sga719} that $G^f$ is the largest sub-$p$-divisible group of $G$ which extends to a $p$-divisible group over $k\pow{t}$. By de Jong's equivalence we therefore get a filtration
$$ 0 \subset D^t(A) \subset D^f(A)\subset D(A)
$$
of the associated Dieudonn\'{e} modules over $\cur{O}_{\ek}$, which descends to a filtration
$$ 0 \subset D^{\dagger,t}(A) \subset D^{\dagger,f}(A) \subset D^{\dagger}(A)
$$
of $(\varphi,\nabla)$-modules over $\cur{O}_{\ekd}$. Moreover, $D^f(A)$ is the largest submodule of $D(A)$ defined over $W\pow{t}$. We may now base change to $\cur{R}$ to get a filtration
$$ 0 \subset D_{\cur{R}}^t(A) \subset D_{\cur{R}}^f(A) \subset D_{\cur{R}}(A)
$$
of $(\varphi,\nabla)$-modules over $\cur{R}$.

\begin{prop} \label{key1} $D_{\cur{R}}^f(A)$ is the largest sub-$(\varphi,\nabla)$-module of $D_{\cur{R}}(A)$ defined over $\cur{R}_K^+$, i.e. by Dwork's trick it is the largest sub-$(\varphi,\nabla)$-module of $D_{\cur{R}}(A)$ admitting a basis of horizontal sections.
\end{prop}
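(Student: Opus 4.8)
The plan is to establish the two inclusions separately, in both cases descending structures down the towers $\cur{R}_K\supset\ekd\supset S_K$ and $\ek\supset\cur{O}_{\ek}\supset W\pow{t}$: the matrix factorisation of Lemma \ref{deflem}(1) is the tool at the top, and the two deformation results of Proposition \ref{defprop} do the work in the middle. For the inclusion ``$D_{\cur{R}}^f(A)$ is defined over $\cur{R}_K^+$'' I would in fact show it is defined over $S_K\subset\cur{R}_K^+$: by hypothesis $D^f(A)$ is defined over $W\pow{t}$, and since the restriction functor from Dieudonn\'e modules over $\cur{O}_{\ekd}$ to Dieudonn\'e modules over $\cur{O}_{\ek}$ is fully faithful (Theorem 5.1 of \cite{kedfull}), the canonical descent $D^{\dagger,f}(A)$ of $D^f(A)$ to $\cur{O}_{\ekd}$ is again defined over $W\pow{t}$, whence $D_{\cur{R}}^f(A)=D^{\dagger,f}(A)\otimes_{\cur{O}_{\ekd}}\cur{R}_K$ is the base change of an $S_K$-module along $S_K\hookrightarrow\cur{R}_K^+\hookrightarrow\cur{R}_K$.

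For maximality, let $N\subset D_{\cur{R}}(A)$ be a sub-$\pn$-module defined over $\cur{R}_K^+$; I must show $N\subseteq D_{\cur{R}}^f(A)$. Write $M:=D^\dagger(A)[1/p]$, a $\pn$-module over $\ekd$ with $M\otimes_{\ekd}\cur{R}_K=D_{\cur{R}}(A)$. The first step is to descend $N$ to $\ekd$: I would extend an $\cur{R}_K^+$-basis of a model of $N$ to an $\cur{R}_K$-basis of $D_{\cur{R}}(A)$ (possible, the quotient being a $\pn$-module over $\cur{R}_K$, hence finite free) and apply Lemma \ref{deflem}(1) to the change-of-basis matrix, $X=YZ$ with $Y\in\GL(\ekd)$ and $Z\in\GL(\cur{R}_K^+)$; after twisting the $\ekd$-basis of $M$ by $Y$, the submodule $N$ becomes the $\cur{R}_K$-span of columns of $Z$ over $\cur{R}_K^+$, and one checks that $\tilde N:=M\cap N$ is then a finite free, $\varphi$- and $\nabla$-stable $\ekd$-submodule of $M$ with $\tilde N\otimes_{\ekd}\cur{R}_K=N$. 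Since $N$ is defined over $\cur{R}_K^+$, Proposition \ref{defprop}(1) gives that $\tilde N$ is defined over $S_K$. Base changing to $\ek$ (where $\cur{O}_{\ek}$ is a complete discrete valuation ring with uniformiser $p$) and intersecting with $D(A)$ yields a Dieudonn\'e sub-module $N_0$ of $D(A)$ over $\cur{O}_{\ek}$ whose rationalisation is defined over $S_K$, so by Proposition \ref{defprop}(2) $N_0$ is defined over $W\pow{t}$. As $D^f(A)$ is — by \S2.3.3 of \cite{sga719} together with de Jong's equivalence $D\colon\mathrm{BT}_{k\pow{t}}\isomto\mathrm{\underline{MD}}_{W\pow{t}}$ — the \emph{largest} sub-Dieudonn\'e module of $D(A)$ defined over $W\pow{t}$, we get $N_0\subseteq D^f(A)$; climbing back up (base change to $\ek$, and use that $D^{\dagger,f}(A)$ is a direct summand of $D^\dagger(A)$ while $\ekd\hookrightarrow\ek$) yields $\tilde N\subseteq D^{\dagger,f}(A)$, hence $N=\tilde N\otimes_{\ekd}\cur{R}_K\subseteq D_{\cur{R}}^f(A)$. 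The equivalent formulation in the statement is then Dwork's trick: over $\cur{R}_K$ a $\pn$-module is defined over $\cur{R}_K^+$ precisely when it admits a basis of horizontal sections, the Frobenius structure forcing the connection to be solvable on the whole open disc.

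The step I expect to be the main obstacle is the descent of $N$ from $\cur{R}_K$ to $\ekd$ — this is exactly what Lemma \ref{deflem}(1) is designed for — and inside it the one non-formal ingredient, namely that $\tilde N=M\cap N$ is finite free over $\ekd$ (so that it is genuinely a $\pn$-module and Proposition \ref{defprop}(1) applies). Everything below $\ekd$ — the integral-lattice bookkeeping over $\cur{O}_{\ek}$, the appeal to the maximality of $D^f(A)$, and the final climb back up — should be routine.
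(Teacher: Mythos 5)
Your overall architecture coincides with the paper's: descend the sub-$\pn$-module $N$ from $\cur{R}_K$ to $\ekd$, use Proposition \ref{defprop}(1) to get it defined over $S_K$, intersect with the integral lattice and use Proposition \ref{defprop}(2) to get a Dieudonn\'e module over $W\pow{t}$, then invoke the maximality of $D^f(A)$ among such. The first half (that $D^f_{\cur{R}}(A)$ is itself defined over $S_K\subset \cur{R}_K^+$) is also fine.

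The genuine gap is exactly the step you flagged: the descent of $N$ from $\cur{R}_K$ to $\ekd$. Lemma \ref{deflem}(1) is not the right tool for this, and the claim you leave as ``one checks'' is false at the level of generality in which you use it. After your change of basis, $N$ is the $\cur{R}_K$-span of the first $r$ columns of a matrix $Z\in\GL_n(\cur{R}_K^+)$, and you take $\tilde N:=M\cap N$ inside $M=(\ekd)^n$. But $\ekd$ is a field and $\cur{R}_K^+\cap\ekd=S_K$, so this intersection can vanish: take $n=2$, $r=1$ and $Z=\left(\begin{smallmatrix}1&0\\ z&1\end{smallmatrix}\right)$ with $z\in\cur{R}_K^+\setminus\ekd$; then $a(1,z)^T\in(\ekd)^2$ forces $a=0$ or $z=(az)/a\in\ekd$, so $M\cap N=0$ while $N$ has rank $1$. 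Hence the matrix factorisation alone cannot produce a rank-$r$ descent; one must use the $\pn$-structure of $N$ in an essential way. This is precisely what the paper does: it invokes Corollaire 3.2.27 of \cite{marmora}, which states that every sub-$\pn$-module of $M\otimes_{\ekd}\cur{R}_K$ arises from a (unique) sub-$\pn$-module of $M$ over $\ekd$ --- a nontrivial input resting on the slope theory/full faithfulness of $\ekd\rightarrow\cur{R}_K$ for $\varphi$-modules, not on Lemma \ref{deflem}(1) (whose role in the paper is confined to Proposition \ref{defprop}, i.e.\ to descending a \emph{whole} module from $\cur{R}_K^+$ to $S_K$). Once this citation replaces your matrix argument, the remainder of your proof agrees with the paper's.
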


\begin{proof} Suppose that $N\subset D_{\cur{R}}(A)$ is defined over $\cur{R}_K^+$. Then thanks to Corollaire 3.2.27 of \cite{marmora}, there exists a sub-$(\varphi,\nabla)$-module $N'\subset D^\dagger(A)[1/p]$ (over $\ekd$) such that $N'\otimes_{\ekd}\cur{R}_K \cong N$. By Proposition \ref{defprop}(i), $N'$ is actually defined as a $(\varphi,\nabla)$-module over $S_K$. The intersection $N'':=N'\cap D^\dagger(A)$ is then a sub-Dieudonn\'{e} module such that $N''[1/p]= N'$, by Proposition \ref{defprop}(ii), $N''$ is actually defined as a Dieudonn\'{e} module over $W\pow{t}$. Therefore we must have $N''\subset D^{\dagger,f}(A)$ and hence $N\subset D^f_{\cur{R}}(A)$ as claimed.
\end{proof}

\begin{prop} There are canonical isomorphisms
\begin{align*}  D_{\cur{R}}^t(A) &\cong H^1_\rig(T_0/K)^\vee \otimes_K \cur{R}_K \\
\frac{D_{\cur{R}}^f(A)}{D_{\cur{R}}^t(A)} &\cong H^1_\mathrm{rig}(B_0/K)^\vee \otimes_K \cur{R}_K
\end{align*}
of $(\varphi,\nabla)$-modules over $\cur{R}_K$. 
\end{prop}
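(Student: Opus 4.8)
The plan is to show that each of the two $\pn$-modules over $\cur{R}_K$ in the statement is the \emph{constant} one attached to a $\varphi$-module over $K$, and to identify that $\varphi$-module with the rational Dieudonn\'e module of the corresponding group scheme over the perfect field $k$; the comparison with $H^1_\rig(-/K)^\vee$ is then the crystalline comparison for tori, resp.\ abelian varieties, over $k$.

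\emph{The toric part.} We have $G^t=\hat{T}[p^\infty]\otimes_{k\pow{t}}k\lser{t}$ with $\hat{T}$ a formal torus over $k\pow{t}$ of special fibre $T_0$. Since $k\pow{t}$ is Henselian with residue field $k$, the character lattice of $\hat{T}$ is a finite \'etale $\Z$-module, hence pulled back from $\Spec k$, so rigidity of tori forces $\hat{T}$ to be the base change of $T_0$ along $k\hookrightarrow k\pow{t}$. As de Jong's equivalence commutes with base change, $D(\hat{T}[p^\infty])\cong D(T_0[p^\infty])\otimes_W W\pow{t}$ over $W\pow{t}$, with \emph{trivial} connection; tensoring to $\cur{O}_{\ek}$ and descending by Kedlaya's full faithfulness theorem (the constant module being manifestly overconvergent) identifies $D^{\dagger,t}(A)$ with $D(T_0[p^\infty])\otimes_W\cur{O}_{\ekd}$, and after inverting $p$ and base changing to $\cur{R}_K$ we get $D^t_{\cur{R}}(A)\cong D(T_0[p^\infty])[1/p]\otimes_K\cur{R}_K$ as $\pn$-modules. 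Finally I would invoke $D(T_0[p^\infty])[1/p]\cong H^1_\rig(T_0/K)^\vee$, which after a finite extension of $k$ splitting $T_0$ reduces to the case $T_0=\G_m$, where it is the classical computation of $H^1_\rig(\G_m/K)$ against the Dieudonn\'e module of $\mu_{p^\infty}$.

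\emph{The abelian part.} Here the input is that $D^f(A)$ is defined over $W\pow{t}$ --- it is the largest submodule of $D(A)$ so defined, as recalled before Proposition~\ref{key1}, and concretely it arises by base change from the Dieudonn\'e module of the $p$-divisible group $\hat{\cur{A}}[p^\infty]$ over $k\pow{t}$ --- and that $D^f_{\cur{R}}(A)$, being defined over $\cur{R}_K^+$, admits a basis of horizontal sections (Proposition~\ref{key1}, via Dwork's trick). Writing $\mathbb{D}$ for the $W\pow{t}$-model of $D^f(A)$, these horizontal sections in fact lie in $\mathbb{D}[1/p]\otimes_{S_K}\cur{R}_K^+$ and span a $\varphi$-stable $K$-subspace $V_0$ (a Frobenius compatible with the connection preserves horizontal sections), giving $D^f_{\cur{R}}(A)\cong V_0\otimes_K\cur{R}_K$ as $\pn$-modules. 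Evaluation at $t=0$, i.e.\ the ring map $\cur{R}_K^+\to K$, $f\mapsto f(0)$, carries a horizontal section to its constant term and induces a $\varphi$-equivariant isomorphism $V_0\isomto(\mathbb{D}\otimes_{W\pow{t}}W)[1/p]=D(\cur{A}_0^\circ[p^\infty])[1/p]$, using compatibility of de Jong's equivalence with specialisation to $t=0$ and that the special fibre of $\hat{\cur{A}}[p^\infty]$ is $\cur{A}_0^\circ[p^\infty]$. This isomorphism carries $W_0:=V_0\cap D^t_{\cur{R}}(A)$ onto $D(T_0[p^\infty])[1/p]$; passing to quotients and using the exact sequence $0\to T_0[p^\infty]\to\cur{A}_0^\circ[p^\infty]\to B_0[p^\infty]\to 0$ (from $0\to T_0\to\cur{A}_0^\circ\to B_0\to 0$) together with exactness of de Jong's functor gives $V_0/W_0\cong D(B_0[p^\infty])[1/p]$, hence $D^f_{\cur{R}}(A)/D^t_{\cur{R}}(A)\cong D(B_0[p^\infty])[1/p]\otimes_K\cur{R}_K$. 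Combining with the crystalline comparison $D(B_0[p^\infty])[1/p]\cong H^1_\rig(B_0/K)^\vee$ for the abelian variety $B_0/k$ --- the absolute analogue of $H^1_\cris(X/\ek)\cong D(A)[1/p]^\vee$ recalled earlier --- yields the second isomorphism.

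\emph{The main obstacle.} I expect the delicate point to be the passage from horizontal sections of $D^f_{\cur{R}}(A)$ to the Dieudonn\'e module of the special fibre by evaluating at $t=0$: this requires knowing that the module is genuinely defined over $W\pow{t}$ (so that $t=0$ is a legitimate point at which the fibre is the special-fibre Dieudonn\'e module) and appealing to Dwork's trick to guarantee that the horizontal sections --- a priori only formal solutions --- converge up to the boundary point $t=0$ and not just on an interior annulus. Granting this, both identifications reduce to the functoriality of de Jong's equivalence and the crystalline comparison over $k$.
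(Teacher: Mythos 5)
Your proposal is correct and follows essentially the same route as the paper: pass to the canonical $W\pow{t}$-models of $D^t(A)$ and $D^f(A)$, use Dwork's trick to trivialise the connection over $\cur{R}_K^+$ and identify the resulting constant module with the rational Dieudonn\'e module of the special fibre ($T_0$, resp.\ $B_0$ via the extension structure of $\cur{A}_0^\circ$), and finish with the comparison between Dieudonn\'e modules and rigid cohomology over the finite field $k$. The only (harmless) variation is that you handle the toric part by observing that the formal torus is literally constant over $k\pow{t}$, where the paper simply applies Dwork's trick again.
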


\begin{proof} By the compatibility of the Dieudonn\'{e} module functor with base change, we know that $D^t(A) \otimes_{W\pow{t}} W\cong D(T_0)$, compatibly with Frobenius, where we have abused notation and written $D^t(A)$ to mean the canonical model of $D^t(A)$ as a Dieudonn\'{e} module over $W\pow{t}$. Hence we have, by Dwork's trick, that $D^t(A)\otimes_{W\pow{t}} \cur{R}_K^+ \cong D(T_0) \otimes_W \cur{R}_K^+$, and the first isomorphism follows from the fact that the rational Dieudonn\'{e} module of a torus is just the dual of its rigid cohomology. 

Similarly we have an isogeny 
$$ \frac{D^f(A)\otimes_{W\pow{t}}W }{D^t(A)\otimes_{W\pow{t}}W} \simeq D(B_0)
$$
and hence again using Dwork's trick we can see that 
$$ \frac{D^f(A)\otimes_{W\pow{t}} \cur{R}^+_K  }{D^t(A)\otimes_{W\pow{t}} \cur{R}_K^+ } \cong  D(B_0)\otimes_W\cur{R}^+_K
$$
and the second isomorphism follows.
\end{proof}

We denote this natural filtration on $D_{\cur{R}}(A)$ by $W_\bu$, so that $W_{-2}=D^t_{\cur{R}}(A)$, $W_{-1}=D^f_{\cur{R}}(A)$ and $W_0=D_{\cur{R}}(A)$. By the previous proposition, both $\mathrm{Gr}_{-2}^W$ and $\mathrm{Gr}_{-1}^W$ are constant $(\varphi,\nabla)$-modules over $\cur{R}_K$, and are pure of weights $-2$ and $-1$ respectively. Moreover, the orthogonality theorem, Th\'{e}or\`{e}me 5.2 of \cite{sga719} gives a canonical isomorphism
$$ \frac{D_{\cur{R}}(A)}{D^f_{\cur{R}}(A)} \cong \left( D^t_{\cur{R}}(A^\vee) \right)^\vee(1) 
$$
of $(\varphi,\nabla)$-modules over $\cur{R}_K$, where $A^\vee$ is the dual abelian variety of $A$. Thus by the previous proposition applied to $A^\vee$ we get that $\mathrm{Gr}_0^W$ is again constant, and pure of weight $0$. We therefore refer to $W_\bu$ as the weight filtration on $D_{\cur{R}}(A)$.

Hence to prove Theorem \ref{wmc2}, it suffices to prove that the weight filtration $W_\bu$ described in the pervious section coincides with the monodromy filtration $M_\bu$, up to a shift by $1$, and after applying the Marmora's functor $\mathrm{WD}$. We start by introducing the following notation.

\begin{defn} Let $M$ be a $(\varphi,\nabla)$-module over $\cur{R}$. Define the operator $D:M\rightarrow M$ by $D(m)=t\nabla(m)$. 
\end{defn}

The main ingredient is then the following explicit description of $D_{\cur{R}}(A)$. 

\begin{prop} \label{key2}There exists a basis $\{ e_1,\ldots,e_l,f_1\ldots,f_m,g_1,\ldots,g_n\}$ for $D_{\cur{R}}(A)$ such that:
\begin{enumerate} \item $\{e_1,\ldots,e_l\}$ is a basis for $W_{-2}$ and $\{e_1,\ldots,e_l,f_1,\ldots,f_m\}$ is a basis for $W_{-1}$;
\item $D(e_i)=D(f_j)=0$ for all $i,j$;
\item $D(g_k)$ is in the $K$-span of the $\{e_i\}$ for all $k$.
\end{enumerate}
\end{prop}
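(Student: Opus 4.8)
The plan is to produce the basis in three stages, corresponding to the three graded pieces, using the fact that $\mathrm{Gr}^W_{-2}$, $\mathrm{Gr}^W_{-1}$ and $\mathrm{Gr}^W_0$ are all \emph{constant} $(\varphi,\nabla)$-modules over $\cur{R}_K$, i.e.\ are $\cur{R}_K\otimes_K V$ for suitable $K$-vector spaces $V$, with $\nabla = \mathrm{id}\otimes d$. Equivalently, each graded piece admits a basis of horizontal sections, i.e.\ a basis killed by $\nabla$ (hence by $D = t\nabla$). So first I would choose, inside $W_{-2} = D^t_\cur{R}(A)$, a horizontal basis $\{e_1,\ldots,e_l\}$; since $W_{-2}$ is itself constant this is immediate and gives $D(e_i)=0$, establishing (ii) for the $e_i$ and the first half of (i).

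Second, I would lift a horizontal basis of $\mathrm{Gr}^W_{-1} = W_{-1}/W_{-2}$ to elements $f_1,\ldots,f_m \in W_{-1}$. Because the chosen lifts need not be horizontal, for each $j$ we only know $D(f_j) \in W_{-2}$; the point is to correct the $f_j$ by elements of $W_{-2}$ so as to make them horizontal. This is a standard argument: the obstruction to horizontality lives in $H^1$ of the relevant complex computing $\Ext^1$ of the constant module $\mathrm{Gr}^W_{-1}$ by the constant module $W_{-2}$ in the category of $(\varphi,\nabla)$-modules over $\cur{R}_K$, and one shows this obstruction vanishes — for a $(\varphi,\nabla)$-module the $\nabla$-cohomology is controlled by the Frobenius, and an extension of constant modules which is trivial after forgetting $\nabla$ (which it is, since $W_{-1}$ itself is constant — being $D^f_\cur{R}(A)$, it has a horizontal basis by Proposition~\ref{key1} via Dwork's trick) is trivial as a $(\varphi,\nabla)$-module. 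Concretely, since $W_{-1} = D^f_\cur{R}(A)$ is defined over $\cur{R}_K^+$ and hence admits a full horizontal basis, I can simply take \emph{that} horizontal basis, arrange (by a $K$-linear change) that its first $l$ members span $W_{-2}$ (possible because $W_{-2}$ is a constant sub-module, hence spanned by horizontal vectors among a horizontal basis — here one uses that a horizontal vector of $W_{-1}$ lying in $W_{-2}$ is a horizontal vector of $W_{-2}$), and call the remaining $m$ of them $f_1,\ldots,f_m$. This gives (i) completely and $D(f_j)=0$, finishing (ii).

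Third, and this is where the real content lies, I would extend to a basis of all of $D_\cur{R}(A)$ by choosing \emph{any} lifts $g_1,\ldots,g_n \in D_\cur{R}(A)$ of a $K$-basis of horizontal sections of $\mathrm{Gr}^W_0 = D_\cur{R}(A)/W_{-1}$ (which is constant by the orthogonality theorem argument already in the text). Then $\{e_i, f_j, g_k\}$ is automatically a basis. The assertion to prove is (iii): $D(g_k) \in W_{-1}$ \emph{and in fact} lands in the $K$-span of the $e_i$, i.e.\ in $W_{-2}$ and moreover in its constant part. That $D(g_k) \in W_{-1}$ is because $g_k$ is horizontal modulo $W_{-1}$. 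The sharper claim that $D(g_k)$ lies in $W_{-2} \cap (\text{$K$-span of the }e_i)$ — and not merely in $W_{-1} = $ ($K$-span of $e_i,f_j$) — is the crux: it says the induced map $\mathrm{Gr}^W_0 \to \mathrm{Gr}^W_{-1}$ given by $D$ vanishes, so that $D$ maps $\mathrm{Gr}^W_0$ directly into $\mathrm{Gr}^W_{-2}$, skipping a step. I expect this to be the main obstacle, and the way to get it is a weight argument: the residue/monodromy operator $D$ decreases weights, the graded pieces are pure of weights $0,-1,-2$, and a morphism from a module pure of weight $0$ (twisted appropriately to account for the $\varphi$-equivariance of $D$, which shifts by $\partial_t(\sigma(t))$, i.e.\ by $t$ up to a unit, hence by a weight-$1$ Tate twist) to a module pure of weight $-1$ with a weight mismatch must vanish. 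Precisely: $D$ is $\varphi$-equivariant up to the twist by $\partial_t(\sigma(t))$, so it induces a $\varphi$-equivariant map $\mathrm{Gr}^W_0(1) \to \mathrm{Gr}^W_{-1}$; the source is pure of weight $0 - 2 = -2$ after the Tate twist while the target is pure of weight $-1$, and a $\varphi$-equivariant map between pure objects of distinct weights is zero. Hence $D(g_k)\in W_{-2}$, and since $W_{-2}$ is constant with horizontal basis $\{e_i\}$ and $D$ of a horizontal-mod-$W_{-1}$ element composed into the constant module $W_{-2}$ produces an element of the $K$-span of the $e_i$ (the connection on $W_{-2}$ being $\mathrm{id}\otimes d$, so $D$ of anything has image killed by the residual $t\partial_t$ structure only if it is a $K$-combination of the $e_i$ — more simply, $D(g_k)$ is determined by a $K$-valued pairing because the extension class of $W_{-1}\subset D_\cur{R}(A)$ by $\mathrm{Gr}^W_0$ is, like all the others, defined over $\cur{R}_K^+$ essentially by the argument of Proposition~\ref{key1} applied to $A^\vee$), we obtain (iii). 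I would spell out the purity/weight-mismatch vanishing as the one genuinely non-formal point, citing the purity of the graded pieces established in the preceding discussion and the compatibility of $D$ with $\varphi$ up to the $\partial_t(\sigma(t))$-twist recorded in Definition~\ref{modules}.
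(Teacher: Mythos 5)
Your first two steps are exactly the paper's: since $W_{-1}=D^f_\cur{R}(A)$ is constant, one simply takes a $K$-basis of its horizontal sections extending one of $H^0(W_{-2})$, and all the Ext-vanishing language is unnecessary (as you yourself note). The genuine divergence is in step three. The paper does not argue via weights at all: it observes that the \emph{two-step} quotient $W_0/W_{-2}$ is already constant, because by the orthogonality theorem (Th\'{e}or\`{e}me 5.2 of SGA 7 I) it is dual, up to twist, to $D^f_\cur{R}(A^\vee)$, which is constant. Hence one may choose the $g_k$ horizontal modulo $W_{-2}$, so that $\nabla(g_k)$ lies in the $\cur{R}_K$-span of the $e_i$ from the start, and the only remaining issue is upgrading the $\cur{R}_K$-span to the $K$-span, which the paper does by running the normalization procedure of Proposition 5.2.6 of Kedlaya's finiteness paper. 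Your route instead takes the $g_k$ horizontal modulo $W_{-1}$ and kills the $\mathrm{Gr}^W_{-1}$-component of $D(g_k)$ by a purity/weight-gap argument. This is a legitimate alternative and arguably more conceptual (it is the same mechanism that makes $N$ strictly decrease the weight filtration in general), but as written it has a gap of rigor: the ``induced map'' $\mathrm{Gr}^W_0(1)\to\mathrm{Gr}^W_{-1}$ given by $D$ is, a priori, only a matrix with entries in $\cur{R}_K$ (the block $(b_{jk})$ with $D(g_k)=\sum_i a_{ik}e_i+\sum_j b_{jk}f_j$), whereas ``a $\varphi$-equivariant map between pure objects of distinct weights vanishes'' is a statement about $K$-linear maps of $F$-isocrystals over the finite field $k$. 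To apply it you must either first apply Marmora's functor $\mathrm{WD}$ (which is exact and compatible with twists), deduce that the monodromy of the associated Weil--Deligne representation maps $\mathrm{WD}(W_0)$ into $\mathrm{WD}(W_{-2})$, and then transfer this back to the $\pn$-module using the equivalence between unipotent $\pn$-modules over $\cur{R}_K$ and $(\varphi,N)$-modules over $K$ (a unipotent module with vanishing $N$ is constant); or first normalize the basis so that the connection matrix is constant and only then invoke purity. Either repair works, but one of them must be made explicit.

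Your final sentence, justifying why $D(g_k)$ lands in the $K$-span rather than the $\cur{R}_K$-span of the $e_i$, is the weakest point: the appeal to ``a $K$-valued pairing'' and to the extension class being defined over $\cur{R}_K^+$ does not constitute an argument. This step is genuinely needed and is exactly what the iterative change-of-basis procedure of Kedlaya (Proposition 5.2.6 of the finiteness paper, cited in the text) accomplishes; you should invoke it, or reproduce it, rather than wave at it.
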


\begin{proof} For any $(\varphi,\nabla)$-module $M$ over $\cur{R}_K$, let $H^0(M)$ denote the subspace of horizontal sections, this is a finite dimensional vector space over $K$. Since $W_{-2}$ and $W_{-1}$ are both constant, i.e. admit bases of horizontal sections, we may choosing a $K$-basis $\{ e_1,\ldots,e_l,f_1,\ldots,f_m\}$ of $H^0(W_{-1})$ such that $\{ e_1,\ldots,e_l\} $ is a $K$-basis for $H^0(W_{-2})$. This then gives us an $\cur{R}_K$-basis $\{ e_1,\ldots,e_l,f_1,\ldots,f_m\}$ of $W_{-1}$ such that $\{ e_1,\ldots,e_l\} $ is an $\cur{R}_K$-basis for $W_{-2}$ and $D(e_i)=D(f_j)=0$ for all $i,j$.

Now, since $W_0/W_{-2}$ is also constant (as it is dual to $D^f_\cur{R}(A^\vee)$ by orthogonality), it follows that we may choose an $\cur{R}_K$-basis $\{ e_1,\ldots,e_l,f_1\ldots,f_m,g_1,\ldots,g_n\}$ for $D_{\cur{R}}(A)$ such that $\nabla(g_k)$ is in the $\cur{R}_K$-span of the $\{e_i\}$ for all $k$, the claim is now that we can modify this basis such that $D(g_k)$ is in fact in the $K$-span of the $\{e_i\}$. But following through the procedure of Proposition 5.2.6 of \cite{kedlayafiniteness} easily does this.
\end{proof}

We can now complete the proof of Theorem \ref{wmc2}, and therefore of Theorem \ref{wmc}.

\begin{proof}[Proof of Theorem \ref{wmc2}] As already remarked, it suffices to show that $\mathrm{WD}(W_\bu) = M_\bu [-1]$.

Letting $\{ e_1,\ldots,e_l,f_1\ldots,f_m,g_1,\ldots,g_n\}$ be as in the previous proposition, it follows from the construction of Marmora that the associated Weil--Deligne representation $\mathrm{WD}(D_{\cur{R}}(A))$ is actually defined over $K$ and is given by the $K$-span of $$\{ e_1,\ldots,e_l,f_1\ldots,f_m,g_1,\ldots,g_n\}$$  inside $D_{\cur{R}}(A)$. The monodromy operator $N$ is then just given by the action of $D$, and therefore takes the form
$$ \left( \begin{matrix} 0 & 0 & A \\ 0 & 0 & 0 \\ 0 & 0 & 0 
\end{matrix} \right)
$$
where $A\in \mathrm{Hom}_K(\langle g_k\rangle,\langle e_i \rangle)$. This visibly satisfies $N^2=0$.

 If $N=0$, then this implies that $D_{\cur{R}}$ admits a basis of horizontal sections, and the monodromy filtration is trivial one, we therefore need to show in this case that the weight filtration is also trivial, that is that $W_{-2}=0$ and $W_{-1}=W_0$. But if $D_{\cur{R}}(A)$ admits a basis of horizontal sections, then Proposition \ref{key1} implies that $D^f_{\cur{R}}(A)=D_{\cur{R}}(A)$, i.e. $W_{-1}=W_0$. By similar considerations on the dual abelian variety $A^\vee$ was can see that we must have $D^f_{\cur{R}}(A^\vee)=D_{\cur{R}}(A^\vee)$ and hence orthogonality implies that $W_{-2}=D^t_{\cur{R}}(A)=0$. 

If $N\neq 0$ then the monodromy filtration is given by $M_{-1}=\mathrm{im}\; N,M_0=\ker N$ and $M_1$ is everything, we therefore have that $M_{-1}\subset \mathrm{WD}(W_{-2})$, $M_0 \supset  \mathrm{WD}(W_{-1})$ and $M_1= \mathrm{WD}(W_0)$. But again, Proposition \ref{key1} implies that we must actually have $M_0 =  \mathrm{WD}(W_{-1})$ (since otherwise we would have a larger constant submodule that $D^f_\cur{R}(A)$) and hence applying this to the dual abelian variety and using orthogonality gives $M_{-1}= \mathrm{WD}(W_{-2})$.
\end{proof}

We can also prove a version of the N\'{e}ron--Ogg--Shafarevich criterion, as follows. Let $A/k\lser{t}$ denote an arbitrary abelian variety, i.e. not necessarily with semistable reduction, and $\cur{A}/k\pow{t}$ its N\'{e}ron model. Then $\cur{A}^\circ_0$, the connected component of the special fibre, is an extension
$$ 0 \rightarrow L_0 \rightarrow \cur{A}_0^\circ \rightarrow B_0
$$
of an abelian variety $B_0$ by a smooth affine algebraic group $L_0$, and $L_0$ is an extension 
$$ 0 \rightarrow T_0 \rightarrow L_0 \rightarrow U_0 \rightarrow 0
$$
of a unipotent group by a torus. $T_0$ is then the maximal torus of $\cur{A}_0^\circ$, and we refer to its dimension $\mu$ as the reductive rank of $\cur{A}_0$. The dimension $\lambda$ of $U_0$ is called the unipotent rank of $\cur{A}_0$ and the dimension $\alpha$ of $B_0$ is called the abelian rank of $\cur{A}_0$. If $n$ is the dimension of $A$ we therefore have $n=\alpha+\mu+\lambda$. Note that good reduction of $A$ is equivalent to having $\mu=\lambda=0$ and semistable reduction of $A$ is equivalent to having $\lambda=0$. These numbers are the same for $A$ and for its dual abelian variety $A'$.

Even without assuming semistable reduction, we still get a filtration
$$ D^t_\cur{R}(A) \subset D^f_\cur{R}(A) \subset D_\cur{R}(A)
$$
where, by Dwork's trick, we have
\begin{align*} D^t_\cur{R}(A) &\cong H^1_\rig(T_0/K)^\vee \otimes_K \rk \\
D^f_\cur{R}(A) &\cong H^1_\rig(\cur{A}_0^\circ/K)^\vee\otimes_K\rk\\
\frac{D^f_\cur{R}(A)}{D^t_\cur{R}(A)} &\cong H^1_\rig(B_0/K)^\vee\otimes_K\rk
\end{align*}
and, as in the proof of Proposition \ref{key1}, $D^f_\cur{R}(A)$ is the largest constant submodule of $D_\cur{R}(A)$. We therefore have 
\begin{align*} \mathrm{rk}_{\rk} D_\cur{R}(A) &=2n \\
\mathrm{rk}_{\rk} D^f_\cur{R}(A) &= \mu + 2\alpha \\
\mathrm{rk}_{\rk} D^t_\cur{R}(A) &=\mu .
\end{align*}
Orthogonality in the form used above no longer holds, however, we do always have
$$ D_\cur{R}^t(A) = D_\cur{R}^f(A) \cap D_\cur{R}^f(A')^\perp
$$
where $\perp$ denotes the orthogonal subspace with respect to the Weil pairing
$$ D_\cur{R}(A) \otimes_{\cur{R}} D_\cur{R}(A') \rightarrow \rk(1)
$$
(see Th\'{e}or\`{e}me 5.2 of \cite{sga719}).

\begin{theo} Let $A/k\lser{t}$ be an abelian variety, not necessarily assumed to have semistable reduction.
\begin{enumerate}
\item $A$ has good reduction iff $D_\cur{R}(A)$ admits a basis of horizontal sections. 
\item $A$ has semistable reduction iff $D_\cur{R}(A)$  admits a unipotent basis, that is a basis $e_1,\ldots,e_n$ such that $\nabla(e_j)\in \cur{R}_Ke_1+ \ldots + \cur{R}_K e_{j-1}$ for all $j$.
\end{enumerate} 
\end{theo}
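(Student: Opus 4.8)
The strategy is to pull the statement back, via de Jong's equivalences $D\colon \mathrm{BT}_{k\pow{t}} \isomto \mathrm{\underline{MD}}_{W\pow{t}}$ and $D\colon \mathrm{BT}_{k\lser{t}} \isomto \mathrm{\underline{MD}}_{\cur{O}_{\ek}}$ together with the descent results of Proposition \ref{defprop}, to the reduction criteria for the $p$-divisible group $G = A[p^\infty]$ proved in \cite{sga719}. For part (i) I would first record the group-theoretic equivalences: $A$ has good reduction $\iff$ $G^{f} = G$ $\iff$ $A[p^\infty]$ extends to a $p$-divisible group over $k\pow{t}$. Indeed $G^f$ is by construction the largest sub-$p$-divisible group of $G$ extending over $k\pow{t}$, so $G^f = G$ precisely when $G$ extends; by de Jong this holds iff $D^{f}_{\cur{R}}(A) = D_{\cur{R}}(A)$, and by the rank formulas $\mathrm{rk}_{\cur{R}_K} D^{f}_{\cur{R}}(A) = \mu + 2\alpha$, $\mathrm{rk}_{\cur{R}_K} D_{\cur{R}}(A) = 2n = 2\alpha + 2\mu + 2\lambda$ this forces $\mu = \lambda = 0$, i.e. good reduction. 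It then remains to see that $D_{\cur{R}}(A)$ admits a basis of horizontal sections iff $A[p^\infty]$ extends over $k\pow{t}$. For the forward implication, a basis of horizontal sections means — by Dwork's trick — that $D_{\cur{R}}(A)$ is defined over $\cur{R}_K^+$; Proposition \ref{defprop}(i) then shows that the $(\varphi,\nabla)$-module $D^\dagger(A)[1/p]$ over $\ekd$ is defined over $S_K$, hence so is the rational Dieudonn\'{e} module $D(A)[1/p] = D^\dagger(A)[1/p]\otimes_{\ekd}\ek$; Proposition \ref{defprop}(ii) then shows that $D(A)$ over $\cur{O}_{\ek}$ descends to $W\pow{t}$, and de Jong's equivalence gives that $A[p^\infty]$ extends over $k\pow{t}$. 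Conversely, if $A[p^\infty]$ extends over $k\pow{t}$ then $D(A)$, and hence $D^\dagger(A)$ and $D_{\cur{R}}(A)$, descend to $W\pow{t}$; a $(\varphi,\nabla)$-module defined over $S_K$ has no singularity at the origin of the open disc, so by Dwork's trick its horizontal sections converge on the whole disc and lie in $\cur{R}_K^+ \subset \cur{R}_K$, furnishing the required basis.

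For the ``only if'' half of (ii), suppose $A$ is semistable. Then the filtration $W_{-2} = D^{t}_{\cur{R}}(A) \subset W_{-1} = D^{f}_{\cur{R}}(A) \subset W_0 = D_{\cur{R}}(A)$ is available, and since the strong orthogonality (Th\'{e}or\`{e}me 5.2 of \cite{sga719}) holds in the semistable case, the pieces $W_{-2}$, $W_{-1}/W_{-2}$ and $W_0/W_{-2}$ are all constant (the last being dual, up to twist, to $D^{f}_{\cur{R}}(A^\vee)$). One can then run the construction in the proof of Proposition \ref{key2}, which uses only that these three graded pieces are constant, to obtain an $\cur{R}_K$-basis $e_1,\dots,e_l$ of $W_{-2}$, an $\cur{R}_K$-basis $e_1,\dots,e_l,f_1,\dots,f_m$ of $W_{-1}$, and an $\cur{R}_K$-basis $e_1,\dots,e_l,f_1,\dots,f_m,g_1,\dots,g_n$ of $D_{\cur{R}}(A)$ with $\nabla(e_i) = \nabla(f_j) = 0$ and $\nabla(g_k) \in \cur{R}_K e_1 + \dots + \cur{R}_K e_l$ for all $i,j,k$. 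Ordering this basis as listed exhibits it as a unipotent basis.

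The ``if'' half of (ii) is the main obstacle. Assume $D_{\cur{R}}(A)$ has a unipotent basis, so that it is an iterated extension of trivial $\nabla$-modules. I would argue that it is then unipotent in the sense of the $p$-adic local monodromy theorem — it becomes trivial already over the subring $\cur{R}_K[\log t] \subset \cur{B}$, with no ramified extension needed — so that the Weil--Deligne representation $\mathrm{WD}(D_{\cur{R}}(A))$ has trivial restriction to the inertia subgroup $I_{k\lser{t}}$, all of the ramification being carried by the monodromy operator $N$. On the other hand, by Grothendieck's semistable reduction criterion (\cite{sga719}), $A$ is semistable iff $I_{k\lser{t}}$ acts unipotently on $V_\ell(A)$ for some (equivalently any) $\ell \neq p$, equivalently iff the $\ell$-adic Weil--Deligne representation of $A$ has trivial inertia; and the restriction to inertia of $\mathrm{WD}(D_{\cur{R}}(A))$ coincides with that of the $\ell$-adic representation, the monodromy of a $p$-divisible group being independent of $\ell$ (by \cite{sga719} together with de Jong's comparison, or by the $\ell$-independence statement of Section \ref{wmli}). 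Combining these equivalences yields the claim. The delicate point, which I would isolate as a separate lemma, is the converse of the first equivalence above: that a $(\varphi,\nabla)$-module over $\cur{R}_K$ whose associated Weil--Deligne representation has trivial inertia is genuinely unipotent as a differential module. This is not formal; it is a structural consequence of the classification underlying Marmora's construction (cf. \cite{marmora}), and verifying it carefully is where the actual work of this half lies.
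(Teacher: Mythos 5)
Your part (i) and the ``only if'' half of part (ii) are essentially the paper's own argument: the paper also reduces everything to the identity $D^f_\cur{R}(A)=$ largest constant sub-$\pn$-module (established exactly by the chain Dwork's trick $\to$ Marmora's descent $\to$ Proposition \ref{defprop}(i) $\to$ Proposition \ref{defprop}(ii) $\to$ de Jong, i.e.\ the proof of Proposition \ref{key1}, which you have correctly unwound), and then counts ranks; for semistable $\Rightarrow$ unipotent it likewise uses orthogonality to see that $D^t_\cur{R}(A)$ and $D_\cur{R}(A)/D^t_\cur{R}(A)$ are constant. So far, fine.

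For the ``if'' half of (ii) you take a genuinely different route. The paper stays entirely inside the category of $\pn$-modules: by Grothendieck's potential semistability and the already-proved direction, $D_\cur{R}(A)$ becomes unipotent of level $2$ after a finite \'{e}tale extension $\cur{R}_K\to\cur{R}_{K'}$, hence is already unipotent of level $2$; applying this to $A'$ as well, the quotient $D_\cur{R}(A')/D^f_\cur{R}(A')$ is constant, so $D^f_\cur{R}(A')^\perp\subset D^f_\cur{R}(A)$, whence $D^t_\cur{R}(A)=D^f_\cur{R}(A')^\perp$ and the rank count $\mu=2n-2\alpha-\mu$ gives $\lambda=0$. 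Your route instead passes through $\mathrm{WD}(D_\cur{R}(A))$, the compatibility of inertia actions across all $\ell$ (including $p$), and Grothendieck's $\ell$-adic criterion. This is logically workable, but two remarks are in order. First, the compatibility you invoke is only proved later in the paper (Section \ref{wmli}), and its proof there rests on Raynaud's $1$-motive uniformisation \cite{1mot}; there is no circularity (that proposition does not use the present theorem), but you are importing a substantially heavier input than the paper's self-contained orthogonality-plus-rank-count argument, and you should say explicitly that trivial inertia can be tested on the graded pieces of the monodromy filtration (inertia acts through a finite quotient, hence semisimply), since Deligne-compatibility only controls traces on those graded pieces. Second, the ``delicate point'' you isolate --- that trivial inertia on $\mathrm{WD}(M)$ forces $M$ to be unipotent as a differential module --- is not needed anywhere in your proof: your chain uses only the forward implication (unipotent $\Rightarrow$ trivial inertia), which is the easy one. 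That converse would matter only if you also proved the ``only if'' half of (ii) via Weil--Deligne representations, which you do not; the genuinely non-formal step in your route is instead the descent/compatibility input just mentioned. The analogous delicate step in the paper's version is the assertion that level-$2$ unipotence descends along finite \'{e}tale extensions of $\cur{R}_K$ for a module already known to be unipotent.
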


\begin{proof} \begin{enumerate} \item If $A$ has good reduction, then $\alpha=n$, and hence $D_\cur{R}(A)=D^f_\cur{R}(A)$ has a basis of horizontal sections. Conversely, if $D_\cur{R}(A)$ has a basis of horizontal sections, then $D^f_\cur{R}(A)=D_\cur{R}(A)$ and hence $\mu+2\alpha=2n$. Since $D_\cur{R}(A')$ must also admit a basis of horizontal sections, we also have $D_\cur{R}^f(A')^\perp=0$ and hence $\mu=0$. Therefore $\alpha=n$ and $A$ has good reduction. 
\item If $A$ has semistable reduction, then $\lambda=0$ and hence $n=\alpha+\mu$. Therefore we have $\mathrm{rk}_{\rk}D_\cur{R}^f(A')^\perp=2n-2\alpha-\mu=\mu$, and hence the inclusion
$$ D^t_\cur{R}(A)\subset D^f_\cur{R}(A')^\perp
$$
must be an equality. Therefore both $D^t_\cur{R}(A)$ and $D_\cur{R}/D^t_\cur{R}(A)$ must be constant, and hence $D_\cur{R}(A)$ is unipotent.

Conversely, suppose that $D_\cur{R}(A)$ is unipotent. Then since $A$ admits has semistable reduction over some finite separable extension of $k\lser{t}$, it follows from what we have just proved that there exists a finite \'{e}tale map $\cur{R}_K \rightarrow \cur{R}_{K'}$ (with $K'/K$ some finite unramified extension) such that the base change to $D_\cur{R}(A) \otimes \cur{R}_{K'}$ is unipotent of level $2$. Hence $D_\cur{R}(A)$ itself must be unipotent of level $2$, and by duality the same is true for $D_\cur{R}(A')$. The quotient $D_\cur{R}(A')/D^f_\cur{R}(A')$ must therefore be constant,  hence $D^f_\cur{R}(A')^\perp \subset D^f_\cur{R}(A)$. It follows that $D^t_\cur{R}(A) = D^f_\cur{R}(A')^\perp$ and equating ranks gives
$$  \mu = 2n- 2\alpha-\mu.
$$
Hence $n=\alpha+\mu$ and so $\lambda=0$, i.e. $A$ has semistable reduction.
\end{enumerate}
\end{proof}

As mentioned previously, these results do not actually use our construction of $H^i_\rig(X/\ekd)$ at all, although we expect that it will be necessary to use methods similar to ours, or rather an extension of our methods to a more robust cohomological formalism, in order to construct a weight spectral sequence in general. In the remainder of this section, we will show how one can immediately deduce a version of the weight-monodromy conjecture describing the interaction of the weight and monodromy filtrations on the cohomology of an open (i.e. affine) smooth curve. 

\begin{theo} \label{wmcp} Let $X$ be a smooth curve over $k\lser{t}$, and $i\geq0$. Let $\overline{X}$ be a smooth compactification of $X$, and assume that $D:=\overline{X} \setminus X$ is geometrically reduced. Then there exists a canonical filtration $^gW_\bu$ on $H^i_\rig(X/\cur{R}_K)$, called the geometric weight filtration, such that the graded pieces $\mathrm{Gr}_n^{^gW}$ are quasi-pure of weight $n$. 
\end{theo}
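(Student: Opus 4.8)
The plan is to deduce the open case from the smooth and proper case (Theorem \ref{wmc} and its Corollary) by means of the Gysin (localisation) exact sequence for the open immersion $X\hookrightarrow\overline{X}$, whose complement is the smooth, zero-dimensional, geometrically reduced divisor $D$.

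First I would dispose of the cases that require no new input. If $D=\emptyset$, i.e.\ $X=\overline{X}$ is proper, then $H^i_\rig(X/\cur{R}_K)=H^i_\rig(\overline{X}/\cur{R}_K)$ is quasi-pure of weight $i$ by the Corollary to Theorem \ref{wmc}, and we set $^gW_\bu$ to be the filtration with $^gW_{i-1}=0$ and $^gW_i=H^i_\rig(X/\cur{R}_K)$; this also gives the assertion of the introduction that in the proper case $\mathrm{Gr}_i^{^gW}$ is the only non-zero graded piece. If $D\neq\emptyset$ then $X$ is affine, so $H^i_\rig(X/\cur{R}_K)=0$ for $i\geq2$ (by base change to $\ek$ and the vanishing of the classical rigid cohomology of a smooth affine curve above degree $1$, equivalently by Poincar\'{e} duality and the vanishing of $H^0_{c,\rig}$ of a connected non-proper curve), while $H^0_\rig(X/\cur{R}_K)=H^0_\rig(\overline{X}/\cur{R}_K)$ is a constant $(\varphi,\nabla)$-module, pure of weight $0$ with vanishing monodromy operator, hence quasi-pure of weight $0$. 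So only the case $i=1$, $D\neq\emptyset$ remains, for which we take $0={}^gW_0\subseteq{}^gW_1\subseteq{}^gW_2=H^1_\rig(X/\cur{R}_K)$.

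For that case, the Gysin sequence for $X=\overline{X}\setminus D$ --- available over $\ekd$-coefficients by dualising the excision triangle for compactly supported cohomology together with Poincar\'{e} duality (\cite{rclsf2}), and exact after the faithfully flat base change $\otimes_{\ekd}\cur{R}_K$ since $\ekd$ is a field --- specialises, using $H^j_\rig(D)=0$ for $j\neq0$ and $H^2_\rig(X/\cur{R}_K)=0$, to the short exact sequence
$$0 \to H^1_\rig(\overline{X}/\cur{R}_K) \to H^1_\rig(X/\cur{R}_K) \to \ker\!\big( H^0_\rig(D/\cur{R}_K)(-1) \to H^2_\rig(\overline{X}/\cur{R}_K) \big) \to 0$$
of $(\varphi,\nabla)$-modules over $\cur{R}_K$. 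I define $^gW_1\subset H^1_\rig(X/\cur{R}_K)$ to be the image of the (injective) map $H^1_\rig(\overline{X}/\cur{R}_K)\to H^1_\rig(X/\cur{R}_K)$. Then $\mathrm{Gr}_1^{^gW}\cong H^1_\rig(\overline{X}/\cur{R}_K)$ is quasi-pure of weight $1$ by the Corollary to Theorem \ref{wmc}. For the top piece, $\mathrm{Gr}_2^{^gW}$ is the displayed kernel, hence a sub-$(\varphi,\nabla)$-module of $H^0_\rig(D/\cur{R}_K)(-1)$. Since $D$ is geometrically reduced and zero-dimensional it is a disjoint union $\coprod_j\spec{L_j}$ of spectra of finite separable extensions $L_j/k\lser{t}$, so $H^0_\rig(D/\cur{R}_K)=\bigoplus_j\cur{R}_K^{L_j}$ is a sum of the $(\varphi,\nabla)$-modules obtained from the unit object by Tsuzuki's finite \'{e}tale base changes $\cur{R}_K\to\cur{R}_K^{L_j}$; consequently $\mathrm{WD}(H^0_\rig(D/\cur{R}_K))=\bigoplus_j\mathrm{Ind}_{W_{L_j}}^{W_{k\lser{t}}}\mathbf{1}$ has trivial monodromy operator and factors through a finite quotient of $W_{k\lser{t}}$, so it is pure of weight $0$ with $N=0$; hence the twist $H^0_\rig(D/\cur{R}_K)(-1)$ is pure of weight $2$ with $N=0$, and therefore quasi-pure of weight $2$. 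Any sub-$(\varphi,\nabla)$-module of it inherits $N=0$ and a subset of the Frobenius eigenvalues, so $\mathrm{Gr}_2^{^gW}$ is quasi-pure of weight $2$, as required. Canonicity of $^gW_\bu$ follows from the functoriality of the Gysin sequence together with the uniqueness of the smooth compactification of a smooth curve.

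The only genuinely non-formal ingredient is the existence and exactness of the Gysin sequence with $\ekd$-coefficients and the identification of its connecting term with $H^0_\rig(D/\cur{R}_K)(-1)$; I would obtain this by dualising the excision triangle for $H^*_{c,\rig}$ from \cite{rclsf2}, using finite-dimensionality and Poincar\'{e} duality for smooth curves. Everything else is controlled: the graded piece $\mathrm{Gr}_1^{^gW}$ by the smooth and proper case (Theorem \ref{wmc}), and $\mathrm{Gr}_2^{^gW}$ by the elementary structure of the cohomology of a finite \'{e}tale $k\lser{t}$-scheme under Marmora's functor.
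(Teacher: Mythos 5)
Your proposal is correct and follows the same overall strategy as the paper: reduce to the case $i=1$, $D\neq\emptyset$, use the four-term excision sequence relating $H^1_\rig(X)$ to $H^1_\rig(\overline{X})$ and $H^0_\rig(D)(-1)$, define $^gW_1$ as the image of $H^1_\rig(\overline{X}/\cur{R}_K)$, and control the two graded pieces by the proper case and by purity of $H^0$ of a finite \'{e}tale scheme respectively. The one place where you diverge is the derivation of the excision sequence over $\ekd$: you propose to dualise the compactly supported excision triangle of \cite{rclsf2} via Poincar\'{e} duality, whereas the paper simply observes that the excision sequence is already known for the classical $\ek$-valued rigid cohomology of $(X,\overline{X},D)$ and then descends it to $\ekd$ using Kedlaya's full faithfulness theorem for $\ekd\rightarrow\ek$ (Theorem 5.1 of \cite{kedfull}). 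The paper's route is shorter and needs no input beyond base change for curves and full faithfulness; yours is more self-contained within the $\ekd$-valued formalism but requires that the compactly supported excision triangle and duality be available in exactly the form you use (including for the zero-dimensional scheme $D$), which is more than the paper chooses to invoke. Your extra detail on $\mathrm{Gr}_2^{^gW}$ --- writing $H^0_\rig(D/\cur{R}_K)$ as a sum of $\cur{R}_K^{L_j}$'s and computing its image under Marmora's functor as a sum of induced trivial representations --- correctly fills in a step the paper leaves implicit.
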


Since we treated the case $D=\emptyset$ in the previous section, we will assume that $D\neq\emptyset$. The case $i=2$ is trivial, and the case $i=0$ is entirely straightforward, since $H^0_\rig(X/\cur{R}_K)$ is just a direct sum of constant modules $\cur{R}_K$. So again the only case of interest is $i=1$.

\begin{lem} There exists an excision exact sequence
$$ 0 \rightarrow H^1_\rig(\overline{X}/\ekd) \rightarrow H^1_\rig(X/\ekd) \rightarrow H^0_\rig(D/\ekd)(-1) \rightarrow H^2_\rig(\overline{X}/\ekd) \rightarrow 0
$$ 
of $(\varphi,\nabla)$-modules over $\ekd$.
\end{lem}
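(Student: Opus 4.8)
The plan is to deduce this four-term sequence from the (classical) Gysin/excision sequence in rigid cohomology over $\ek$ by faithfully flat descent along $\ekd\hookrightarrow\ek$. First observe that, since $D$ is geometrically reduced, it is finite \'{e}tale over $k\lser{t}$, hence a smooth \emph{and} proper $k\lser{t}$-variety of dimension $0$; in particular $H^j_\rig(D/\ekd)=0$ for $j\neq0$. Recall that we are assuming $D\neq\emptyset$, so that $X$ is affine and hence $H^2_\rig(X/\ek)=0$; as $\ek$ is a field extension of $\ekd$ and $H^2_\rig(X/\ekd)\otimes_{\ekd}\ek\cong H^2_\rig(X/\ek)$ by the base change isomorphism of \cite{rclsf2}, this forces $H^2_\rig(X/\ekd)=0$ as well. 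Finally, Corollary \ref{pnmodst} applies to each of $H^1_\rig(\overline{X}/\ekd)$, $H^1_\rig(X/\ekd)$, $H^0_\rig(D/\ekd)$ and $H^2_\rig(\overline{X}/\ekd)$ — the required finite-dimensionality and Frobenius bijectivity hold because the base change to $\ek$ is an isomorphism — so each of them is a $\pn$-module over $\ekd$, as is its Tate twist, and base change along $\ekd\hookrightarrow\ek$ takes it to the corresponding rigid cohomology $\pn$-module over $\ek$.

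Over $\ek$, the standard Gysin (excision) sequence for the divisor $D\subset\overline{X}$, together with the vanishing $H^2_\rig(X/\ek)=0$, is an exact sequence
$$0 \rightarrow H^1_\rig(\overline{X}/\ek) \rightarrow H^1_\rig(X/\ek) \rightarrow H^0_\rig(D/\ek)(-1) \rightarrow H^2_\rig(\overline{X}/\ek) \rightarrow 0$$
of $\pn$-modules over $\ek$, in which the first arrow is the restriction $j^*$ along $j:X\hookrightarrow\overline{X}$, the second is the residue map along $D$ (which is precisely what accounts for the Tate twist), and the third is the Gysin pushforward. Now $j^*$ is already defined over $\ekd$ by functoriality of $H^*_\rig(-/\ekd)$. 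The residue and Gysin maps are morphisms of $\pn$-modules over $\ek$ between $\pn$-modules that are base changed from $\ekd$, so by Kedlaya's full faithfulness theorem (Theorem 5.1 of \cite{kedfull}) each descends uniquely to a morphism of $\pn$-modules over $\ekd$. The resulting four-term sequence of $\pn$-modules over $\ekd$ is a complex — its consecutive compositions vanish after $-\otimes_{\ekd}\ek$, hence vanish, since $\Hom$-groups of $\ekd$-modules inject under $-\otimes_{\ekd}\ek$ — and it becomes exact after $-\otimes_{\ekd}\ek$; by faithful flatness of $\ek$ over $\ekd$ it is therefore already exact, which is the assertion of the lemma.

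An alternative would be to build the sequence entirely inside the $\ekd$-valued theory: either from the localisation triangle relating $\mathbf{R}\Gamma_\rig(\overline{X}/\ekd)$, $\mathbf{R}\Gamma_\rig(X/\ekd)$ and the local cohomology of $\overline{X}$ along $D$, together with a purity isomorphism identifying the latter with $\mathbf{R}\Gamma_\rig(D/\ekd)(-1)[-2]$; or, dually, from the excision triangle for the compactly supported cohomology constructed in \cite{rclsf2}, combined with Poincar\'{e} duality and the properness of $\overline{X}$ and of $D$. In either approach the only substantial point — and this is also the main obstacle in the descent argument above — is to produce the residue and Gysin maps and to check that they carry the displayed Tate twist and are horizontal for the Gauss--Manin connection; once these maps are in hand over $\ekd$, exactness of the sequence is formal. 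I would carry out the descent version, since it reuses only machinery already recorded above.
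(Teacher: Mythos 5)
Your proposal is correct and takes essentially the same route as the paper: the paper's proof also starts from the known four-term excision sequence of $\pn$-modules over $\ek$ and descends it to $\ekd$ via Kedlaya's full faithfulness theorem (Theorem 5.1 of \cite{kedfull}). You simply spell out the details (descent of the residue and Gysin maps, and exactness via faithful flatness of $\ekd\rightarrow\ek$) that the paper leaves implicit.
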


\begin{proof} We know that there exists a corresponding exact sequence
$$ 0 \rightarrow H^1_\rig(\overline{X}/\ek) \rightarrow H^1_\rig(X/\ek) \rightarrow H^0_\rig(D/\ek)(-1) \rightarrow H^2_\rig(\overline{X}/\ek) \rightarrow 0
$$ 
in $\ek$-valued cohomology, which is an exact sequence of $(\varphi,\nabla)$-modules. Hence the claim follows immediately by full faithfulness of the base extension functor $\ekd\rightarrow \ek$, i.e. Theorem 5.1 of \cite{kedfull}.
\end{proof}

\begin{proof}[Proof of Theorem \ref{wmcp}]
Base changing the excision exact sequence to $\cur{R}_K$ gives an exact sequence 
$$ 0 \rightarrow H^1_\rig(\overline{X}/\rk) \rightarrow H^1_\rig(X/\rk) \rightarrow H^0_\rig(D/\rk)(-1) \rightarrow H^2_\rig(\overline{X}/\rk) \rightarrow 0
$$ 
and we define
$$^gW_1 = H^1_\rig(\overline{X}/\cur{R}_K) ,\;\;
^gW_2 = H^1_\rig(X/\cur{R}_K) .
$$
It follows from the results of the previous section that $\mathrm{Gr}^{^gW}_1= H^1_\rig(\overline{X}/\cur{R}_K) $ is quasi-pure of weight $1$, and it is clear that $\mathrm{Gr}^{^gW}_2$ is a sub-module of $H^0_\rig(D/\cur{R}_K)(-1)$, and therefore pure of weight $2$. Theorem \ref{wmcp} is proven.
\end{proof}

\begin{defn} Let $X$ be a curve over $k\lser{t}$, and $i\geq0$. Define
$$ H^i_{c,\rig}(X/\rk):= H^i_{c,\rig}(X/\ekd)\otimes_{\ekd} \rk ,
$$
these are a $\pn$-modules over $\rk$.
\end{defn}

Then Poincar\'{e} duality immediately implies the following.

\begin{cor}  Let $X$ be a smooth curve over $k\lser{t}$, and $i\geq0$. Let $\overline{X}$ be a smooth compactification of $X$, and assume that $D:=\overline{X} \setminus X$ is geometrically reduced. Then there exists a canonical filtration $^gW_\bu$ on $H^i_{c,\rig}(X/\cur{R}_K)$, called the geometric weight filtration, such that the graded pieces $\mathrm{Gr}_n^{^gW}$ are quasi-pure of weight $n$. 
\end{cor}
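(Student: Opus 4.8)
The plan is to obtain the compactly supported geometric weight filtration by transporting, through Poincar\'e duality, the filtration $^gW_\bu$ on ordinary cohomology produced by Theorem~\ref{wmcp}, and then to check that quasi-purity survives the combination of dualising and Tate twisting that Poincar\'e duality introduces. Since we already know the statement for $D=\emptyset$ (the curve is then proper and $H^i_{c,\rig}=H^i_\rig$), and the cases $i\neq 1$ will fall out of the same formalism, the whole content is again concentrated in $H^1_{c,\rig}$.

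First I would recall that, because $X$ is a smooth curve and the trivial coefficient $\cur{O}_X$ extends (trivially) to an overconvergent $F$-isocrystal on $\overline{X}$, the Poincar\'e pairing of \S3 of \cite{rclsf2}, tensored with $\rk$, gives a perfect pairing of $\pn$-modules over $\rk$
$$ H^i_\rig(X/\rk)\times H^{2-i}_{c,\rig}(X/\rk)\rightarrow \rk(-1), $$
hence a canonical isomorphism $H^i_{c,\rig}(X/\rk)\cong H^{2-i}_\rig(X/\rk)^\vee(-1)$ of $\pn$-modules. As this pairing is compatible with $\varphi$ and $\nabla$, the orthogonal complement of a sub-$\pn$-module is again a sub-$\pn$-module, and I would define
$$ {}^gW_n H^i_{c,\rig}(X/\rk):=\bigl({}^gW_{1-n}H^{2-i}_\rig(X/\rk)\bigr)^{\perp}, $$
the orthogonal complement under the above pairing of the geometric weight filtration supplied by Theorem~\ref{wmcp}. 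A routine computation with orthogonal complements (using perfectness of the pairing on graded pieces) then gives a canonical isomorphism of $\pn$-modules
$$ \mathrm{Gr}_n^{^gW}H^i_{c,\rig}(X/\rk)\cong\bigl(\mathrm{Gr}^{^gW}_{2-n}H^{2-i}_\rig(X/\rk)\bigr)^\vee(-1). $$

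It then remains to see that this is quasi-pure of weight $n$, for which I would record how quasi-purity --- equivalently, Marmora's functor $\mathrm{WD}$ --- behaves under duals and Tate twists: $\mathrm{WD}$ is compatible with duality and sends $\rk(1)$ to the Tate object, so $\mathrm{WD}(M^\vee(-1))\cong\mathrm{WD}(M)^\vee(-1)$; for a Weil--Deligne representation $(V,N)$ the monodromy operator on $V^\vee$ is minus the transpose of $N$ and the monodromy filtration of $V^\vee$ is the filtration dual to $M_\bu V$, so $\mathrm{Gr}^M_k(V^\vee)\cong(\mathrm{Gr}^M_{-k}V)^\vee$; and the eigenvalues of a lift of $\mathrm{Frob}_k$ on $V^\vee$ are the inverses of those on $V$, while twisting by $(-1)$ multiplies them by $q$. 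Combining these, if $W$ is quasi-pure of weight $w$ then $W^\vee(-1)$ is quasi-pure of weight $2-w$; applied with $w=2-n$, the weight of $\mathrm{Gr}^{^gW}_{2-n}H^{2-i}_\rig(X/\rk)$ by Theorem~\ref{wmcp}, this shows $\mathrm{Gr}_n^{^gW}H^i_{c,\rig}(X/\rk)$ is quasi-pure of weight $n$. Canonicity of $^gW_\bu$ is immediate, since both the filtration on $H^{2-i}_\rig$ and the Poincar\'e pairing are canonical.

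The main obstacle is exactly the step asserting that $W\mapsto W^\vee(-1)$ sends quasi-pure of weight $w$ to quasi-pure of weight $2-w$: this rests on the compatibility of $\mathrm{WD}$ with duality, which has to be extracted from Marmora's construction in \cite{marmora}, and on the standard but not purely formal identification of the monodromy filtration of a dual Weil--Deligne representation with the dual monodromy filtration together with the relation $N_{V^\vee}=-N_V^{\mathrm t}$. Once these are in hand, everything else is bookkeeping with the indexing of the weight filtration.
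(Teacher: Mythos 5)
Your proposal is correct and follows exactly the route the paper takes: the paper deduces the corollary from Theorem \ref{wmcp} by Poincar\'e duality (its entire proof is the phrase ``Poincar\'e duality immediately implies the following''), and your definition of $^gW_\bu$ on $H^i_{c,\rig}$ as the orthogonal complement of the shifted filtration on $H^{2-i}_\rig$, together with the check that $W\mapsto W^\vee(-1)$ sends quasi-pure of weight $w$ to quasi-pure of weight $2-w$, is precisely the bookkeeping the paper leaves implicit. The indexing $\mathrm{Gr}_n^{^gW}H^i_{c,\rig}\cong(\mathrm{Gr}^{^gW}_{2-n}H^{2-i}_\rig)^\vee(-1)$ is consistent and yields the stated weights.
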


\subsection{Independence of $\ell$}

In this section we use the results on weight monodromy from the previous section to prove independence of $\ell$-results for the cohomology of varieties over $k\lser{t}$ which include the case $\ell=p$.

\begin{defn} \begin{itemize} 
\item Let $X$ be a smooth proper variety over $k\lser{t}$, so that $H^i_\mathrm{cris}(X/\ekd)$ is defined as a $\pn$-module over $\ekd$. Then we define $$H^i_p(X):=\mathrm{WD}(H^i_\mathrm{cris}(X/\cur{R}_K)),$$ this is a $K^\mathrm{un}$-valued Weil--Delgine representation over $k\lser{t}$.
\item Let $X$ be a smooth curve, so that $H^i_\rig(X/\ekd)$ is defined as a $\pn$-module over $\ekd$. Then we define $$H^i_p(X):=\mathrm{WD}(H^i_\rig(X/\cur{R}_K)),$$ this is a $K^\mathrm{un}$-valued Weil--Delgine representation over $k\lser{t}$.
\item  Let $X$ be a smooth curve, so that $H^i_{c,\rig}(X/\ekd)$ is defined as a $\pn$-module over $\ekd$. Then we define $$H^i_{c,p}(X):=\mathrm{WD}(H^i_{c,\rig}(X/\cur{R}_K)),$$ this is a $K^\mathrm{un}$-valued Weil--Delgine representation over $k\lser{t}$.
\end{itemize}
\end{defn} 

Then following Deligne, these are objects that can be compared with the families of $\ell$-adic representations $\{ H^i_\et(X_{k\lser{t}^\mathrm{sep}},\Q_\ell) \}_{\ell\neq p},\{ H^i_{c,\et}(X_{k\lser{t}^\mathrm{sep}},\Q_\ell) \}_{\ell\neq p}$, or rather with the family of associated Weil--Deligne representations. $\{ H^i_\ell(X)\}_{\ell\neq p},\{ H^i_{c,\ell}(X)\}_{\ell\neq p}$.

\begin{defn}[\cite{delconst}, \S8] \begin{enumerate} \item Let $F/E$ be an extension of characteristic $0$ fields, and $V$ an $F$-valued Weil--Deligne representation of $k\lser{t}$. Then we say that $F$ is defied over $E$ if for any algebraically closed field $\Omega$ containing $F$, $V\otimes \Omega$ is isomorphic to all of its $\mathrm{Aut}(\Omega/E)$-conjugates.
\item Let $\{E_i\}_{i\in I}$ be a family of field extensions of some fixed characteristic $0$ field $E$, and $\{V_i\}_{i\in I}$ a family of $\Omega_i$-valued Weil--Deligne representations of $k\lser{t}$. Then we say that $\{V_i \}_{i\in I}$ is \emph{compatible} if each $V_i$ is defined over $E$, and for any $i,j$ and any algebraically closed field $\Omega$ containing $E_i$ and $E_j$, the Weil--Deligne representations $V_i\otimes\Omega$ and $V_j\otimes\Omega$ are isomorphic.
\end{enumerate}
\end{defn}

Then we have the following, more easily checkable, characterisation of compatibility.

\begin{lem}[\cite{delconst}, \S8] A family $\{V_i\}_{i\in I}$ a family of $\Omega_i$-valued Weil--Deligne representations of $k\lser{t}$ as above is compatible if and only if for all $k$ the character
$$ \mathrm{Tr}(-|\mathrm{Gr}_k^MV_i): W_{k\lser{t}}\rightarrow E_i
$$
of the $k$-th graded piece of the monodromy filtration has values in $E$ and is independent of $i$.
\end{lem}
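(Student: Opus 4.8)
The plan is to deduce the lemma from the structure theory of Weil--Deligne representations over an algebraically closed field $\Omega$ of characteristic zero. Since $I_{k\lser{t}}$ acts through a finite quotient and $N$ is nilpotent, any Weil--Deligne representation $V$ over $\Omega$ has a Frobenius-semisimplification $V^{\mathrm{ss}}$, and by Jacobson--Morozov $V^{\mathrm{ss}}$ decomposes as a finite direct sum $\bigoplus_{n\geq 0}\tau_n\otimes\mathrm{sp}(n)$, where $\mathrm{sp}(n)$ is the $(n+1)$-dimensional ``special'' representation on which $N$ acts by a single nilpotent Jordan block and $W_{k\lser{t}}$ acts semisimply through powers of $\sigma\mapsto q^{v(\sigma)}$, and each $\tau_n$ is a semisimple representation of $W_{k\lser{t}}$ with $N=0$. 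The monodromy filtration of $\mathrm{sp}(n)$ has one-dimensional graded pieces in degrees $n,n-2,\dots,-n$, each a Tate twist of the next, so
$$
\mathrm{Gr}^M_kV\;\cong\;\mathrm{Gr}^M_kV^{\mathrm{ss}}\;\cong\;\bigoplus_{\substack{n\geq|k|\\ n\equiv k\ (2)}}\tau_n\!\left(\tfrac{k-n}{2}\right)
$$
as $W_{k\lser{t}}$-representations. (In the cases we actually need --- the cohomology of curves --- Frobenius-semisimplification can be avoided, since the graded pieces of the monodromy filtration are built from $H^1_\rig$ of smooth proper curves over $k$, on which Frobenius is known to act semisimply.)

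The key step is that the triangular shape of this decomposition makes the collection $\{\mathrm{Tr}(-\,|\,\mathrm{Gr}^M_kV)\}_{k\in\Z}$ determine, and be determined by, the characters $\{\mathrm{Tr}(-\,|\,\tau_n)\}_{n\geq0}$; since each $\tau_n$ is semisimple and $\mathrm{char}\,\Omega=0$, its character determines it up to isomorphism, so the monodromy-graded traces determine $V^{\mathrm{ss}}$ --- equivalently $V$ itself in the situations above, where $\rho$ is automatically semisimple --- up to isomorphism of Weil--Deligne representations. Conversely, any isomorphism of Weil--Deligne representations is compatible with $N$, hence preserves the monodromy filtration (which depends only on $N$) and induces $W_{k\lser{t}}$-isomorphisms on each $\mathrm{Gr}^M_k$, so isomorphic Weil--Deligne representations have equal traces on all graded pieces.

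With this in hand both implications are formal Galois descent of characters. For the forward direction, if $\{V_i\}$ is compatible then, fixing $\Omega$ and $\tau\in\mathrm{Aut}(\Omega/E)$, the hypothesis that $V_i$ is defined over $E$ gives $V_i\otimes\Omega\cong(V_i\otimes\Omega)^\tau$, so each $\mathrm{Tr}(-\,|\,\mathrm{Gr}^M_k(V_i\otimes\Omega))$ is fixed by $\mathrm{Aut}(\Omega/E)$, hence lies in $\Omega^{\mathrm{Aut}(\Omega/E)}=E$; since $\mathrm{Gr}^M_k(V_i\otimes\Omega)=(\mathrm{Gr}^M_kV_i)\otimes\Omega$, this says $\mathrm{Tr}(-\,|\,\mathrm{Gr}^M_kV_i)$ is $E$-valued, while $V_i\otimes\Omega\cong V_j\otimes\Omega$ gives its independence of $i$. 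For the converse, assuming the $\mathrm{Tr}(-\,|\,\mathrm{Gr}^M_kV_i)$ are $E$-valued and independent of $i$, then for any $\tau\in\mathrm{Aut}(\Omega/E)$ the Weil--Deligne representations $V_i\otimes\Omega$ and $(V_i\otimes\Omega)^\tau$ have the same (because $\tau$-fixed) traces on all monodromy graded pieces and are therefore isomorphic by the key step, so $V_i$ is defined over $E$; likewise $V_i\otimes\Omega\cong V_j\otimes\Omega$ for all $i,j$, so the family is compatible.

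The main obstacle lies entirely in the key step --- making precise that the (Frobenius-semisimplified) Weil--Deligne representation is reconstructed from the monodromy-graded pieces, i.e. the Jacobson--Morozov bookkeeping and the M\"obius-type inversion recovering the $\tau_n$ from the $\mathrm{Gr}^M_k$; everything else is formal. In the intended applications this step is genuinely a citation of \cite{delconst}, \S8, and the role of the lemma here is only to translate ``compatibility of the $p$-adic and $\ell$-adic Weil--Deligne representations of a curve'' into the concrete assertion that the Frobenius traces on the graded pieces of the monodromy filtration agree.
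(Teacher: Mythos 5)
The paper does not actually prove this lemma: it is quoted verbatim as a citation to \cite{delconst}, \S 8, and no argument is given in the text. Your write-up is therefore a reconstruction of Deligne's proof rather than a parallel to anything in the paper, and in substance it is the right reconstruction: the decomposition $V^{\mathrm{ss}}\cong\bigoplus_n\tau_n\otimes\mathrm{sp}(n)$, the computation of $\mathrm{Gr}^M_k$ in terms of Tate twists of the $\tau_n$, the inversion recovering the characters of the $\tau_n$ from the graded traces, and the formal $\mathrm{Aut}(\Omega/E)$-descent of characters are exactly the ingredients of Deligne's argument. Two caveats. First, as you note, the traces on the monodromy graded pieces only determine $V^{F\text{-ss}}$, so the ``if'' direction of the lemma as literally stated (with compatibility meaning genuine isomorphism of $V_i\otimes\Omega$ with its conjugates and with $V_j\otimes\Omega$) requires the $V_i$ to be Frobenius-semisimple, which is the standing hypothesis in Deligne's \S 8 and is implicit here. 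Second, your parenthetical justification of Frobenius-semisimplicity in the cases at hand is not valid as written: semisimplicity of Frobenius on the graded pieces of a stable filtration does not imply semisimplicity on the whole space (a unipotent block has semisimple graded pieces). What rescues it in the abelian-variety/proper-curve case is that the graded pieces have pairwise distinct weights, so the generalized eigenspace decomposition splits the filtration and semisimplicity of the pieces does propagate; for an open curve the weights of $\mathrm{Gr}^{^gW}_2$ and of $\mathrm{Gr}^M_2$ of the weight-one part coincide, so one should either invoke the lemma only at the level of Frobenius-semisimplifications (which is all the applications in \S 2.3 use, since only traces are compared) or supply a separate argument. Neither point affects the way the lemma is used in the paper.
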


As with the weight-monodromy conjecture, the main ingredient of the restricted version of $\ell$-indepdence we wish to explain is essentially a result on the $p$-divisible groups of abelian varieties (or more generally, $1$-motives). If $A$ is an abelian variety over $k\lser{t}$, we abuse notation slightly and write $V_\ell(A)$ for the Weil--Deligne representation associated to the $\ell$-adic Tate module of $A$, and $V_p(A)$ for the Weil--Deligne representation associated to $D_\cur{R}(A)$. 

\begin{prop} Let $A$ be an abelian variety over $k\lser{t}$. Then the family of Weil-Delgine representations
$$ \left\{ V_\ell(A) \right\}_\ell
$$
as $\ell$ ranges over all primes (including $\ell=p$) is compatible. 
\end{prop}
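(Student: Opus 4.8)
The plan is to verify the trace criterion of the preceding lemma: it suffices to show that for every integer $k$ the character $\sigma\mapsto \mathrm{Tr}(\sigma\mid \mathrm{Gr}^M_k V_\ell(A))$ of the $k$th graded piece of the monodromy filtration takes values in $\Q$ and is independent of $\ell$, as $\ell$ runs over all primes including $p$. For $\ell\neq p$ this is the classical compatibility of the Tate modules of an abelian variety over a local field, which follows from the semistable reduction theorem together with the orthogonality theorem of \cite{sga719} exactly as in the mixed characteristic case; so the real content is to show that the graded pieces of $M_\bu$ on $V_p(A)=\mathrm{WD}(D_{\cur{R}}(A))$ carry the same Frobenius traces, and that these lie in $\Q$.

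The first step is to reduce to the case where $A$ has semistable reduction. Both the functor $A\mapsto V_\ell(A)$ (for $\ell\neq p$) and Marmora's functor $A\mapsto \mathrm{WD}(D_{\cur{R}}(A))$ commute with base change along a finite separable extension $k\lser{t}\hookrightarrow L$; for $\ell=p$ this is because, under $\mathrm{WD}$, the finite \'etale base change $\cur{R}_K\to \cur{R}_K^L$ on $(\varphi,\nabla)$-modules corresponds to restriction of Weil--Deligne representations from $W_{k\lser{t}}$ to $W_L$. Choosing $L/k\lser{t}$ finite Galois with $A_L$ semistable, the monodromy filtration --- being the monodromy weight filtration of the nilpotent operator $N$, which does not change on restricting to the finite index subgroup $W_L$ --- satisfies $\mathrm{Gr}^M_k V_\ell(A)|_{W_L}\cong \mathrm{Gr}^M_k V_\ell(A_L)$; and since $I_{k\lser{t}}$ acts on $\mathrm{Gr}^M_k V_\ell(A)$ through a finite quotient, the character of $\mathrm{Gr}^M_k V_\ell(A)$ is determined by its restriction to $W_L$ together with the $\mathrm{Gal}(L/k\lser{t})$-action, which is the descent datum furnished by $A$ (compare the last paragraph). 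So we may assume that $A$ has semistable reduction over $k\lser{t}$.

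In the semistable case I would invoke the explicit description already established in the proof of Theorem \ref{wmc2}: $\mathrm{WD}$ of the weight filtration $0\subset D^t_{\cur{R}}(A)\subset D^f_{\cur{R}}(A)\subset D_{\cur{R}}(A)$ is, up to shift, the monodromy filtration on $V_p(A)$, and its graded pieces are the constant $(\varphi,\nabla)$-modules
$$ H^1_\rig(T_0/K)^\vee\otimes_K\cur{R}_K,\qquad H^1_\rig(B_0/K)^\vee\otimes_K\cur{R}_K,\qquad \left(D^t_{\cur{R}}(A^\vee)\right)^\vee(1), $$
where $T_0$ and $B_0$ are the toric and abelian parts of $\cur{A}_0^\circ$. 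Under $\mathrm{WD}$ these become the unramified Weil--Deligne representations attached to the crystalline Frobenius on $H^1_\rig(T_0/K)$, on $H^1_\rig(B_0/K)$, and to a dual twist of the corresponding piece for $A^\vee$. For $\ell\neq p$, Grothendieck's orthogonality theorem describes the graded pieces of $M_\bu V_\ell(A)$ as the corresponding unramified representations built from the $\ell$-adic $H^1$ of $T_0$ and $B_0$ and a dual twist. Comparing these reduces the claim to the classical statement that, for an abelian variety (respectively a torus) over the finite field $k$, the trace of $\mathrm{Frob}_k$ on $\ell$-adic $H^1$ agrees with its trace on crystalline $H^1$ and is a rational integer, for all $\ell$ including $\ell=p$ --- this being elementary for tori, and for abelian varieties a consequence of the Katz--Messing comparison (or of the $\ell$-independence of the reduced trace of an endomorphism). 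The same input settles the $\mathrm{Gal}(L/k\lser{t})$-descent datum appearing in the reduction step, since that datum is implemented by endomorphisms of the relevant abelian varieties and tori over $k$.

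The step I expect to be the main obstacle is making the descent from $L$ down to $k\lser{t}$ precise in the non-semistable case: one must check that the finite inertia action occurring on the monodromy graded pieces --- which is where any wild ramification is concentrated --- is realised uniformly in $\ell$, equivalently that the trace criterion may genuinely be verified after passing to $L$ for a family arising from a single abelian variety. This is exactly where one uses that the various $\ell$-adic and $p$-adic realisations of the toric and abelian parts of N\'eron models over finite fields, and of the endomorphisms implementing the descent, have matching Frobenius traces.
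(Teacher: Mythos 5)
Your treatment of the semistable case coincides with the paper's: both identify $\mathrm{Gr}^M_{-1}$, $\mathrm{Gr}^M_{0}$, $\mathrm{Gr}^M_{1}$ of $V_\ell(A)$ (for every $\ell$, including $p$, via the orthogonality theorem and the explicit basis from the proof of Theorem \ref{wmc2}) with the realisations of the toric part $T_0$, the abelian part $B_0$, and a Tate-twisted dual toric part for $A^\vee$, and then invoke $\ell$-independence for tori and abelian varieties over the finite field $k$. Where you genuinely diverge is the reduction to that case. You propose to pass to a finite Galois extension $L/k\lser{t}$ over which $A$ becomes semistable and then descend the trace computation from $W_L$ to $W_{k\lser{t}}$ — and you correctly flag this descent as the main obstacle, since the character must be evaluated on all of $W_{k\lser{t}}$, not just on the finite-index subgroup $W_L$. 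The paper's dévissage avoids this entirely: by the Grothendieck--Raynaud theory of $1$-motives, the graded pieces of the monodromy filtration on $V_\ell(A)$ are, already over $k\lser{t}$, the Tate module of a torus, the Tate module of an abelian variety with potentially good reduction, and a finite-image integral representation; the first and third are immediate, and only the potentially-good piece requires an extension $F/k\lser{t}$, where the entire Weil--Deligne representation is the action of the finite group $\mathrm{Gal}(F/k\lser{t})\subset \mathrm{Aut}_k(A'_k)$ twisted by Frobenius on $H^1_{\cris}(A'_k/K)^\vee\otimes_K K^{\mathrm{un}}$, so the only input needed is the classical character comparison for a finite group of automorphisms of an abelian variety over a finite field. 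Your route is viable and ultimately rests on the same final input (matching traces of endomorphisms on crystalline and \'etale $H^1$ over $k$), but to complete it you would still have to verify that the weight filtration on $D_{\cur{R}}(A_L)$ and its identification with the constant modules built from $T_0$ and $B_0$ are equivariant for the semilinear $\mathrm{Gal}(L/k\lser{t})$-action on the N\'eron model, uniformly in $\ell$; this is exactly the bookkeeping that the $1$-motive description packages for you, which is why the paper's route is the shorter one.
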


\begin{proof} First suppose that $A$ has semistable reduction. In this case we know exactly what the graded piece of the monodromy filtration look like - if we have an exact sequence
$$ 0 \rightarrow T_0 \rightarrow \cur{A}_0^\circ \rightarrow B_0 \rightarrow 0
$$
expressing the connected component of the special fibre of the N\'{e}ron model as an extension of an abelian variety by a torus, then for all $\ell$, including $\ell=p$ we have
$$ \mathrm{Gr}_{-1}^MV_\ell(A) \cong V_\ell(T_0),\;\; \mathrm{Gr}_{0}^MV_\ell(A) \cong V_\ell(B_0),\;\; \mathrm{Gr}_{1}^MV_\ell(A) \cong V_\ell(T'_0)^\vee(1)
$$
where $T'_0$ is the torus occurring in the corresponding exact sequence for the dual abelian variety $A'$. The claim then follows from independence of $\ell$ for tori and abelian varieties over finite fields.

In general, we use the theory of 1-motives from \cite{1mot}, which gives the graded parts of the monodromy filtration $\mathrm{Gr}_k^MV_\ell(A)$ as:
\begin{itemize}
\item the Tate module ($\ell$-adic or $p$-adic) of a torus $T'$ over $k\lser{t}$;
\item the Tate module ($\ell$-adic or $p$-adic) of an abelian variety $A'$ over $k\lser{t}$ with potentially good reduction;
\item the Weil--Deligne representation ($\ell$-adic or $p$-adic) associated to some continuous homomorphism
$$ \rho: \mathrm{Gal}(k\lser{t}^\mathrm{sep}/k\lser{t})\rightarrow \mathrm{GL}_n(\Z)
$$
(necessarily with finite image).
\end{itemize}
Since $\ell$-independence for the third of these is clear, and the first is of the same form but with a Tate twist, we can reduce to the case where $A$ has potentially good reduction, and the monodromy filtration is trivial. 

Then (since $k$ is finite) there exists some finite, separable, totally ramified extension $F/k\lser{t}$ such that $A_F$ has good reduction, let $A'_k$ denote the corresponding abelian variety over $k$. Then $\mathrm{Gal}(F/k\lser{t})$ acts on $A'_k$ via $k$-automorphisms, and exactly as in the $\ell$-adic case, we can describe the $p$-adic Tate module $V_p(A)$ as follows: the monodromy operator $N$ is trivial, and the Galois representation is given by the induced action of $\mathrm{Gal}(F/k\lser{t})$ on $H^1_\mathrm{cris}(A'_k/K)^\vee \otimes_K K^\mathrm{un}$. Hence it suffices to show that for any subgroup $G\subset \mathrm{Aut}_k(A'_k)$, the character of the induced action on $H^1_\mathrm{cris}(A'_k/K)$ is equal to that character of the induced action on $H^1_\et(A'_{\bar{k}},\Q_\ell)$ for all $\ell\neq p$. Since crystalline cohomology of abelian varieties coincides with the Dieudonne module of the associated $p$-divisible group, this is exactly the statement of (for example) the Corollary in Chapter V, \S2 of \cite{pdiv}.
\end{proof}

\begin{cor} Let $X$ be a smooth and proper variety over $k\lser{t}$ of dimension $n$, and $i\in \{0,1,2n-1,2n \}$. Then the family of Weil--Deligne representations
$$ \left\{ H^i_\ell(X) \right\}_\ell
$$
as $\ell$ ranges over all primes (including $\ell=p$) is compatible.
\end{cor}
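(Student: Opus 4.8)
The plan is to reduce the statement to the case $i=1$ and then quote the Proposition above on $\ell$-independence for abelian varieties. The extreme cases are immediate: $H^0_\ell(X)$ is the trivial one-dimensional Weil--Deligne representation (with $N=0$) for every $\ell$, and $H^{2n}_\ell(X)$ is the one-dimensional representation on which $\mathrm{Frob}_k$ acts by $q^n$ and $N=0$, again independently of $\ell$. In both cases the monodromy filtration is concentrated in a single degree and the trace character of $\mathrm{Frob}_k$ is visibly $\Q$-valued and independent of $\ell$, so the criterion of the Lemma characterising compatibility is satisfied with $E=\Q$.

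For $i=2n-1$ I would use Poincaré duality. On the $\ell$-adic side ($\ell\neq p$) one has an isomorphism $H^{2n-1}_\et(X_{k\lser{t}^\mathrm{sep}},\Q_\ell)\cong H^1_\et(X_{k\lser{t}^\mathrm{sep}},\Q_\ell)^\vee$ twisted by $\Q_\ell(-n)$ of Weil--Deligne representations; on the $p$-adic side the analogous statement holds for $H^{2n-1}_\mathrm{cris}(X/\cur{R}_K)$, using Poincaré duality in crystalline cohomology of the smooth proper variety $X$, the facts that both $H^1_\mathrm{cris}(X/\ek)$ and $H^{2n-1}_\mathrm{cris}(X/\ek)$ descend to $\ekd$ by Kedlaya's overconvergence theorem and that the pairing descends by full faithfulness of $\ekd\to\ek$ (Theorem 5.1 of \cite{kedfull}), and finally that Marmora's functor $\mathrm{WD}$ commutes with duals and with Tate twists. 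Since taking duals and Tate twists of a family of Weil--Deligne representations preserves compatibility (the trace of $\mathrm{Frob}_k$ on a graded piece of the monodromy filtration is merely inverted and multiplied by a power of $q$), the case $i=2n-1$ follows from the case $i=1$.

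It thus remains to handle $i=1$, where I would pass to the Albanese variety $A=\mathrm{Alb}(X)$, an abelian variety over $k\lser{t}$. For $\ell\neq p$ one has $H^1_\et(X_{k\lser{t}^\mathrm{sep}},\Q_\ell)\cong V_\ell(A)^\vee$, and as recalled in the discussion of Theorem \ref{wmc} one has $H^1_\mathrm{cris}(X/\ek)\cong D(A)[1/p]^\vee$, which by Kedlaya's theorem descends to $H^1_\mathrm{cris}(X/\ekd)\cong D^\dagger(A)[1/p]^\vee$ and hence, after $\otimes_{\ekd}\cur{R}_K$, to $H^1_\mathrm{cris}(X/\cur{R}_K)\cong D_{\cur{R}}(A)^\vee$. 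Applying $\mathrm{WD}$ and using again that it commutes with duals, we obtain $H^1_\ell(X)\cong V_\ell(A)^\vee$ for \emph{all} primes $\ell$, including $\ell=p$, where $V_p(A)=\mathrm{WD}(D_{\cur{R}}(A))$ in the notation of the previous subsection. Therefore $\{H^1_\ell(X)\}_\ell$ is the dual of the family $\{V_\ell(A)\}_\ell$, which is compatible by the Proposition; compatibility passes to duals, and all the relevant trace characters are $\Q$-valued (they are built from Frobenius eigenvalues on Tate modules of tori and abelian varieties over the finite field $k$), so $E=\Q$ works uniformly. This proves the corollary.

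The main obstacle I anticipate is the $\ell=p$ comparison: one must check that $H^1_\mathrm{cris}(X/\cur{R}_K)\cong D_{\cur{R}}(A)^\vee$ is an isomorphism of $(\varphi,\nabla)$-modules over $\cur{R}_K$, so that it is respected by $\mathrm{WD}$, and one must verify that $\mathrm{WD}$ really does commute with formation of duals and Tate twists. The latter requires unwinding Marmora's construction (the base change to $\cur{B}$, the functor $M\mapsto(M\otimes_{\cur{R}_K}\cur{B})^{\nabla=0}$, and Fontaine's linearisation), and is the place where one has to be careful about normalisations, signs, and the precise identification of the monodromy operator $N$. By contrast the purely $\ell$-adic inputs --- Poincaré duality, the Albanese, and $\ell$-independence for abelian varieties and tori over finite fields --- are entirely standard.
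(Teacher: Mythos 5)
Your proposal is correct and follows exactly the route the paper intends: dispose of $i=0,2n$ directly, reduce $i=2n-1$ to $i=1$ by Poincar\'{e} duality, and handle $i=1$ via the Albanese and the preceding proposition on $\ell$-independence for abelian varieties. The paper's own proof is just the one-line remark that ``the only real case of interest is $i=1$, which follows from the previous proposition''; your write-up supplies the details (including the compatibility of Marmora's functor with duals and twists) that the paper leaves implicit.
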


\begin{proof} As before, the only real case of interest is $i=1$, which follows from the previous proposition.
\end{proof}

\begin{cor} \label{lindepopen}Let $X/k\lser{t}$ be a smooth curve, and $i\geq0$. Then the two families of Weil--Deligne representations
$$ \left\{ H^i_\ell(X) \right\}_\ell, \;\; \left\{ H^i_{c,\ell}(X) \right\}_\ell
$$
as $\ell$ ranges over all primes (including $\ell=p$) are both compatible.
\end{cor}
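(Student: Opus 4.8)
The plan is to reduce everything to the one interesting case — $i=1$ with $X$ affine — and then to play the excision sequence off against the geometric weight filtration of Theorem~\ref{wmcp} and the independence of $\ell$ for abelian varieties proved above. Throughout I write $H^i_\ell(X)$ with $\ell=p$ meaning $H^i_p(X)$. First I would dispose of the easy cases. All groups vanish for $i\geq 3$; for $i\in\{0,2\}$ the group $H^i_\rig(X/\rk)$, resp.\ $H^i_{c,\rig}(X/\rk)$, and its \'etale counterpart are Tate twists of sums of copies of the constant module indexed by the Galois set $\pi_0(X_{\overline{k\lser{t}}})$, so their associated Weil--Deligne representations are Tate twists of $\Q$-rational permutation representations with $N=0$ and are compatible by inspection. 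For a proper smooth curve and $i=1$, compatibility is the independence-of-$\ell$ statement for smooth proper curves already proved (apply it to the Albanese of $X$, or note that a proper smooth curve is a smooth proper variety of dimension $1$). Finally Poincar\'e duality gives $H^i_{c,\rig}(X/\rk)\cong H^{2-i}_\rig(X/\rk)^\vee(-1)$ compatibly with the $\pn$-structure, hence with $\mathrm{WD}$, and likewise $\ell$-adically; since $\mathrm{Gr}^M_k(V^\vee(m))\cong(\mathrm{Gr}^M_{-k}V)^\vee(m)$, the criterion of \cite{delconst}, \S8 shows that $\{V_\ell\}$ is compatible iff $\{V_\ell^\vee(m)\}$ is. So the whole statement reduces to showing $\{H^1_\ell(X)\}_\ell$ is compatible for $X$ a smooth affine curve, $D=\overline{X}\setminus X\neq\emptyset$.

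Next I would base change the excision sequence of $\pn$-modules over $\ekd$ (the lemma above) to $\rk$ and apply Marmora's functor $\mathrm{WD}$, which is exact since it is the composite of $M\mapsto(M\otimes_{\rk}\cur{B})^{\nabla=0}$ with Fontaine's twist, $\cur{B}$ being flat over $\rk$ and every $\pn$-module being trivial over $\cur{B}$. Using $H^1_\rig(\overline{X}/\rk)=H^1_\cris(\overline{X}/\rk)$ in dimension $1$, together with the classical $\ell$-adic Gysin sequences, this produces exact sequences of Weil--Deligne representations
\[ 0\to H^1_\ell(\overline{X})\to H^1_\ell(X)\to H^0_\ell(D)(-1)\to H^2_\ell(\overline{X})\to 0 \]
for every $\ell$ (including $\ell=p$). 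Set $\mathcal{W}_\ell:=\mathrm{image}\big(H^1_\ell(X)\to H^0_\ell(D)(-1)\big)=\ker\big(H^0_\ell(D)(-1)\to H^2_\ell(\overline{X})\big)$. Since $H^2_\ell(X)=0$ the Gysin map is onto, so $\mathcal{W}_\ell$ has character $\mathrm{Tr}(-\,|\,H^0_\ell(D)(-1))-\mathrm{Tr}(-\,|\,H^2_\ell(\overline{X}))$, which is $\Q$-valued and independent of $\ell$ ($D$ being geometrically reduced, these are twisted permutation characters), and $N=0$ on $\mathcal{W}_\ell$ with $\mathcal{W}_\ell$ pure of weight $2$, so its monodromy filtration sits in degree $0$; hence $\{\mathcal{W}_\ell\}_\ell$ is compatible, as is $\{H^1_\ell(\overline{X})\}_\ell$ by the first step. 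By Theorem~\ref{wmcp} (and its classical $\ell$-adic analogue) the geometric weight filtration on $H^1_\ell(X)$ has $\mathrm{Gr}^{^gW}_1\cong H^1_\ell(\overline{X})$ quasi-pure of weight $1$ and $\mathrm{Gr}^{^gW}_2\cong\mathcal{W}_\ell$ pure of weight $2$.

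The crux is then the identity of characters
\[ \mathrm{Tr}\big(g\,\big|\,\mathrm{Gr}^M_k H^1_\ell(X)\big)=\mathrm{Tr}\big(g\,\big|\,\mathrm{Gr}^M_k H^1_\ell(\overline{X})\big)+\delta_{k,0}\,\mathrm{Tr}\big(g\,\big|\,\mathcal{W}_\ell\big)\qquad(g\in W_{k\lser{t}}), \]
which amounts to saying that the monodromy filtration of $H^1_\ell(X)$ induces that of $H^1_\ell(\overline{X})$ on the subspace $H^1_\ell(\overline{X})$ and the constant filtration on the quotient $\mathcal{W}_\ell$. Granting it, the right-hand side is $\Q$-valued and the \emph{same} for $\ell=p$ and for $\ell\neq p$, since it is assembled from the compatible families $\{H^1_\ell(\overline{X})\}_\ell$ and $\{\mathcal{W}_\ell\}_\ell$; so the criterion of \cite{delconst}, \S8 closes the argument, and the reductions of the first paragraph then also give $\{H^1_{c,\ell}(X)\}_\ell$. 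To set up the identity one observes that since $N=0$ on $H^0_\ell(D)(-1)$ it vanishes on the quotient $\mathcal{W}_\ell$, so $N$ maps $H^1_\ell(X)$ into $H^1_\ell(\overline{X})$; and as $H^1_\ell(\overline{X})$ is quasi-pure of weight $1$, $N(H^1_\ell(\overline{X}))$ equals the weight-$0$ bottom step of its monodromy filtration, which $N$ annihilates.

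I expect the main obstacle to be the remaining point: that $N$ does not overshoot this bottom step, i.e.\ that $N(H^1_\ell(X))\subseteq\mathrm{im}\big(N|_{H^1_\ell(\overline{X})}\big)$. Once that is in hand, $N^2=0$ on $H^1_\ell(X)$, its monodromy filtration is the explicit three-step filtration $\mathrm{im}\,N\subseteq\ker N\subseteq H^1_\ell(X)$, and the character identity above follows by a short diagram chase using the four-term excision sequence. On the $\rk$-side I would establish the non-overshoot statement in the manner of Proposition~\ref{key2} — producing an $\rk$-basis of $H^1_\rig(X/\rk)$ adapted to $^gW_\bu$ on which $D=t\nabla$ has the required block form, with the residue vectors sent into the weight-$0$ span — or by invoking compatibility of the Poincar\'e pairing with $N$ and the corresponding statement for $H^1_{c,\rig}$; on the $\ell$-adic side it is the classical weight--monodromy formalism for open curves. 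This is the only input not already packaged in the results quoted above, and everything else is bookkeeping with the excision sequence and the compatibility criterion.
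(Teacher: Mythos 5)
Your proposal is correct and follows essentially the same route as the paper: reduce to $i=1$ for affine $X$ (compact supports via Poincar\'e duality, the proper case via the abelian variety result), apply $\mathrm{WD}$ to the excision sequence, and identify $\mathrm{Gr}^M_{\pm1}H^1_\ell(X)$ with $\mathrm{Gr}^M_{\pm1}H^1_\ell(\overline{X})$ and $\mathrm{Gr}^M_0H^1_\ell(X)$ with an extension of $W_\ell$ by $\mathrm{Gr}^M_0H^1_\ell(\overline{X})$ --- this identification is exactly the ``straightforward linear algebra'' in the paper's proof. The one point you flag as requiring new input, namely $N(H^1_\ell(X))\subseteq\mathrm{im}(N|_{H^1_\ell(\overline{X})})$, is already a formal consequence of what you quote and needs neither a rerun of Proposition \ref{key2} nor the duality argument: $N$ lowers Frobenius weights by $2$, the weights occurring in $H^1_\ell(X)$ lie in $\{0,1,2\}$ (by quasi-purity of $H^1_\ell(\overline{X})$ of weight $1$ and purity of $W_\ell$ of weight $2$), and the weight-$0$ generalized eigenspace of $H^1_\ell(X)$ coincides with that of $H^1_\ell(\overline{X})$, which quasi-purity identifies with $\mathrm{im}(N|_{H^1_\ell(\overline{X})})$; hence $N$ lands there automatically, $N^2=0$, and your character identity follows.
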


\begin{proof} It suffices to treat the case of cohomology without supports, since the supported version then follows from Poincar\'{e} duality. Let $\overline{X}$ be a smooth compactification for $X$, and $D=\overline{X} \setminus X$, we may assume that $D\neq \emptyset$. The only case of interest again is $i=1$, and for all $\ell$ we have the excision exact sequence
$$ 0 \rightarrow H^1_\ell(\overline{X}) \rightarrow H^1_\ell (X)  \rightarrow H^0_\ell(D)(-1)  \rightarrow H^2_\ell(\overline{X}) \rightarrow 0.
$$
Them some straightforward linear algebra shows that
\begin{align*} \mathrm{Gr}_{-1}^M H^1_\ell(X) &= \mathrm{Gr}_{-1}^M H^1_\ell(\overline{X}) \\
\mathrm{Gr}_{1}^M H^1_\ell(X) &= \mathrm{Gr}_{1}^M H^1_\ell(\overline{X}) 
\end{align*}
and that we have an exact sequence
$$ 0 \rightarrow \mathrm{Gr}_{0}^M H^1_\ell(\overline{X}) \rightarrow \mathrm{Gr}_{0}^M H^1_\ell(X) \rightarrow W_\ell \rightarrow 0
$$
where 
$$ W_\ell:= \ker\left( H^0_\ell(D,K)(-1)  \rightarrow H^2_\ell(\overline{X}) \right).
$$
Since we know $\ell$-independence for both $ H^0_\ell(D,K)(-1)$ and $H^2_\ell(\overline{X}) $, we therefore know $\ell$-independence for $W_\ell$. Since we know $\ell$-independence for $\mathrm{Gr}_{k}^M H^1_\ell(\overline{X}) $, $\ell$-independence for $\mathrm{Gr}_{k}^M H^1_\ell(X) $ then follows.
\end{proof}

\appendix

\section{Non-embeddable varieties and descent}\label{nonembsec}

In this appendix, for completeness as much as any other reason, we show how to extend the construction of rigid cohomology $H^i_\rig(X/\ekd,\cur{E})$, which applied to embeddable varieties $X/k\lser{t}$, to those which are not necessarily embeddable. The method, using Zariski descent, is completely standard, and we will allow $k$ to be an arbitrary field of characteristic $p$.

\begin{defn} Let $X$ be a $k\lser{t}$-variety. An embedding system for $X$ is a simplicial triple $(X_\bu,Y_\bu,\mathfrak{P}_\bu)$ where $X_\bu\rightarrow X$ is a Zariski hyper-cover of $X$, each $(X_n,Y_n,\mathfrak{P}_n)$ is a smooth and proper frame over $\cur{V}\pow{t}$ and all the face maps $\mathfrak{P}_n\rightarrow \mathfrak{P}_{n-1}$ are smooth around $X_n$. A Frobenius $\varphi$ on an embedding system $(X_\bu,Y_\bu,\mathfrak{P}_\bu)$ is an endomorphism $\varphi:\mathfrak{P}_\bu\rightarrow \mathfrak{P}_\bu$ lifting the absolute $q$-power Frobenius on $P_\bu=\mathfrak{P}_\bu \otimes_{\cur{V}\pow{t}} k\pow{t}$. 
\end{defn}

\begin{prop} Every $k\lser{t}$-variety $X$ admits an embedding system $(X_\bu,Y_\bu,\mathfrak{P}_\bu)$ with a Frobenius lift $\varphi$.
\end{prop}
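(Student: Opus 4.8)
The plan is to run the standard Čech construction attached to a finite affine open cover of $X$. First I would treat the local (embeddable) case: since $X$ is of finite type over $k\lser{t}$ it admits a finite cover by affine opens $U_1,\dots,U_r$, and writing $U_i=\spec{B_i}$ and choosing generators of $B_i$ over $k\lser{t}$ gives a closed immersion $U_i\hookrightarrow\A^{N_i}_{k\lser{t}}$. Because $\spec{k\lser{t}}$ is an open subscheme of $\spec{k\pow{t}}$, the scheme $\A^{N_i}_{k\lser{t}}$ is an open subscheme of $\A^{N_i}_{k\pow{t}}$, hence of $\P^{N_i}_{k\pow{t}}$; taking $Y_i$ to be the scheme-theoretic closure of $U_i$ in $\P^{N_i}_{k\pow{t}}$ yields a $k\pow{t}$-scheme proper over $k\pow{t}$ with $U_i\hookrightarrow Y_i$ an open immersion. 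Setting $\frak{P}_i:=\widehat{\P}^{N_i}_{\cur{V}\pow{t}}$, the $p$-adic completion of projective space over $\cur{V}\pow{t}$ (which is formally smooth over $\cur{V}\pow{t}$), we obtain a smooth and proper $\cur{V}\pow{t}$-frame $(U_i,Y_i,\frak{P}_i)$ together with the Frobenius lift $\varphi_i$ given by the $\sigma$-semilinear coordinatewise $q$-power endomorphism of $\widehat{\P}^{N_i}_{\cur{V}\pow{t}}$, which reduces modulo $\pi$ to the absolute $q$-power Frobenius of $\P^{N_i}_{k\pow{t}}$.

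Next I would glue these into a simplicial frame. Let $X_\bu\to X$ be the Čech nerve of $\coprod_i U_i\to X$, so that $X_n=\coprod_{(i_0,\dots,i_n)}U_{i_0}\cap\dots\cap U_{i_n}$; being the Čech nerve of an open cover (i.e. $0$-coskeletal) it is automatically a Zariski hypercover. On the component indexed by $(i_0,\dots,i_n)$ set $\frak{P}_{(i_0,\dots,i_n)}:=\frak{P}_{i_0}\times_{\cur{V}\pow{t}}\dots\times_{\cur{V}\pow{t}}\frak{P}_{i_n}$ and let $Y_{(i_0,\dots,i_n)}$ be the scheme-theoretic closure of $U_{i_0}\cap\dots\cap U_{i_n}$ inside $Y_{i_0}\times_{k\pow{t}}\dots\times_{k\pow{t}}Y_{i_n}=\frak{P}_{(i_0,\dots,i_n)}\otimes_{\cur{V}\pow{t}}k\pow{t}$. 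Separatedness of $X$ makes $U_{i_0}\cap\dots\cap U_{i_n}\hookrightarrow U_{i_0}\times_{k\lser{t}}\dots\times_{k\lser{t}}U_{i_n}$ a closed immersion, so the intersection is locally closed in the ambient product and hence open in its closure $Y_{(i_0,\dots,i_n)}$, which is proper over $k\pow{t}$. The face morphisms of frames are the coordinate projections $\frak{P}_n\to\frak{P}_{n-1}$ dropping one factor: each is a base change of $\widehat{\P}^{N}_{\cur{V}\pow{t}}\to\spf{\cur{V}\pow{t}}$, hence formally smooth everywhere (in particular around $X_n$), and since the dropped factor $Y_{i_j}$ is proper over $k\pow{t}$ the projection is proper and so carries $Y_n$ into $Y_{n-1}$; the degeneracies are the corresponding diagonal morphisms. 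Finally, the products $\varphi_n:=\varphi_{i_0}\times\dots\times\varphi_{i_n}$ of the local lifts commute with all of these maps (coordinatewise $q$-power maps commute with projections and diagonals), so $\varphi_\bu$ is a simplicial endomorphism of $\frak{P}_\bu$ lifting the absolute $q$-power Frobenius of $P_\bu$, and $(X_\bu,Y_\bu,\frak{P}_\bu)$ together with $\varphi_\bu$ is the desired embedding system.

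I do not expect a genuine obstacle here: the construction is exactly the one used throughout the literature on cohomological descent in rigid cohomology, and the entire content is bookkeeping — that the Čech nerve is a hypercover, that separatedness of $X$ makes the relevant diagonals closed immersions, that finite products of formally smooth formal $\cur{V}\pow{t}$-schemes remain formally smooth and of finite type while finite products of proper $k\pow{t}$-schemes remain proper, that the coordinate projections of the $\frak{P}_n$ carry $Y_n$ into $Y_{n-1}$, and that the local Frobenius lifts are mutually compatible. The only point worth a moment's care is the interplay of the two base rings $k\pow{t}$ and $k\lser{t}$ when forming the products, which is handled by the identity $k\lser{t}\otimes_{k\pow{t}}k\lser{t}=k\lser{t}$: it makes fibre products of $k\lser{t}$-schemes over $k\pow{t}$ agree with those over $k\lser{t}$, so the $X_n$ really are the intersections $\bigcap_j U_{i_j}$ as $k\lser{t}$-varieties and the augmentation $X_\bu\to X$ makes sense.
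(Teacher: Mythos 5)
Your proposal is correct and follows essentially the same route as the paper: cover $X$ by finitely many affines, embed each in projective space over $k\pow{t}$ with formal model $\widehat{\P}^{N_i}_{\cur{V}\pow{t}}$, take the \v{C}ech nerve (the paper writes it as $U_n=U\times_X\cdots\times_X U$ with $U=\coprod_i U_i$) together with products of the ambient formal schemes and closures of the $X_n$ therein, and observe that disjoint unions of products of projective spaces admit an evident Frobenius lift. You simply spell out some of the bookkeeping (smoothness and properness of the face maps, compatibility of the coordinatewise lifts) that the paper leaves implicit.
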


\begin{proof} Let $\left\{U_i\right\}$ be a finite open affine covering for $X$. Then there exists an embedding $U_i\rightarrow \mathbb{P}^{n_i}_{k\lser{t}}$ for some $n_i$, and we let $\overline{U}_i$ be the closure of $U_i$ in $\mathbb{P}^{n_i}_{k\pow{t}}$. We can thus consider the frame $(U, \overline{U}, \mathscr{U})$ where $U=\coprod_i U_i$, $\overline{U}=\coprod_i\overline{U}_i$ and $\mathfrak{U}=\coprod_i \widehat{\mathbb{P}}^{n_i}_{\mathcal{V}\pow{t}}$. Now define $U_n=U\times_X\ldots\times_X U$, with $n$ copies of $U$, and similarly define  $\overline{U}_n=\overline{U}\times_k\ldots \times_k \overline{U}$ and $\mathfrak{U}_n=\mathfrak{U}\times_{\mathcal{V}\pow{t}}\ldots \times_{\mathcal{V}\pow{t}}\mathfrak{U}$, fibre product in the category of formal $\mathcal{V}$-schemes. Then we have a simplicial triple $(U_\bullet, \overline{U}_\bullet,\mathfrak{U}_\bullet)$, and we get a framing system $(X_\bullet, Y_\bullet,\mathfrak{P}_\bullet)$ for $X$ by taking $X_n=U_n$, $Y_n$ to be the closure of $X_n$ in $\overline{U}_n$ and $\mathfrak{P}_n=\mathfrak{U}_n$. Since each $\mathfrak{P}_n$ is a disjoint union of products of projective space, the simplicial formal scheme $\mathfrak{P}_\bu$ admits a Frobenius lift.
\end{proof}

Let $X$ be $k\lser{t}$-variety, and $(X_\bu,Y_\bu,\mathfrak{P}_\bu)$ an embedding system for $X$ with a Frobenius lift $\varphi$. Then for some $\sh{E}\in(F\text{-})\mathrm{Isoc}^\dagger(X/\ekd)$ or $(F\text{-})\mathrm{Isoc}^\dagger(X/K)$ we can realise each $\sh{E}|_{X_n}$ on $(X_n,Y_n,\mathfrak{P}_n)$ to give a compatible collection of modules with connection on the simplicial rigid space $]Y_\bu[_{\mathfrak{P}_\bu}$.

\begin{defn} Define $H^i_\rig((X_\bu,Y_\bu,\mathfrak{P}_\bu)/\ekd,\sh{E}):= H^i(]Y_\bu[_{\mathfrak{P}_\bu},\sh{E}\otimes \Omega^*_{]Y_\bu[_{\mathfrak{P}_\bu}/S_K})$, these are vector spaces over $\ekd$.
\end{defn}

Of course, we expect that $H^i_\rig((X_\bu,Y_\bu,\mathfrak{P}_\bu)/\ekd,\sh{E})$ only depends on the pair $(X.\sh{E})$, and when $X$ is embeddable coincides with the rigid cohomology defined in previous sections. Happily this is the case.

\begin{lem} \label{zhd1} Suppose that $X$ is embeddable, and $(X_\bu,Y_\bu,\mathfrak{P}_\bu)$ is as above. Then there is a natural isomorphism
$$ H^i_\rig((X_\bu,Y_\bu,\mathfrak{P}_\bu)/\ekd,\sh{E})\cong H^i_\rig(X/\ekd,\sh{E})
$$
which is compatible with Frobenius when $\sh{E}\in F\text{-}\mathrm{Isoc}^\dagger(X/\ekd)$ and the connection when $\sh{E}\in \mathrm{Isoc}^\dagger(X/K)$.
\end{lem}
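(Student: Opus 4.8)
My plan is to compare both sides with the cohomology of a common refinement, exploiting that $\ekd$-valued rigid cohomology of an embeddable variety is independent of the chosen good frame and satisfies Zariski cohomological descent, both established exactly as for classical rigid cohomology. Since $X$ is embeddable, fix a smooth and proper $\cur{V}\pow{t}$-frame $(X,Y,\mathfrak{P})$ computing $H^i_\rig(X/\ekd,\sh{E})$, and form the product embedding system
$$ \mathscr{Z}_\bu := \bigl(X_\bu,\ Z_\bu,\ \mathfrak{P}_\bu\times_{\cur{V}\pow{t}}\mathfrak{P}\bigr), $$
where $Z_n$ is the closure of $X_n$ inside $Y_n\times_{k\pow{t}}Y$, the embedding of $X_n$ being $j_n\colon X_n\hookrightarrow Y_n$ on the first factor and $X_n\to X\hookrightarrow Y$ on the second (so $X_n$ is open in $Z_n$). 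Each term is again a smooth and proper $\cur{V}\pow{t}$-frame: $Z_n$ is proper over $k\pow{t}$, and $\mathfrak{P}_n\times_{\cur{V}\pow{t}}\mathfrak{P}$ is smooth over $\cur{V}\pow{t}$ around $X_n$ because both factors are. The face maps are (face map of $\mathfrak{P}_\bu)\times\mathrm{id}_\mathfrak{P}$, hence still smooth around $X_n$, and the underlying hypercover $X_\bu\to X$ is unchanged, so $\mathscr{Z}_\bu$ is an embedding system for $X$, onto which we realise $\sh{E}$.

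There are two projections. The first, $\pi_1\colon\mathscr{Z}_\bu\to(X_\bu,Y_\bu,\mathfrak{P}_\bu)$, is on each simplicial level a morphism of smooth and proper frames that is the identity on $X_n$ and for which $\mathfrak{P}_n\times_{\cur{V}\pow{t}}\mathfrak{P}\to\mathfrak{P}_n$ is smooth around $X_n$ (being the base change of $\mathfrak{P}\to\spf{\cur{V}\pow{t}}$). By independence of rigid cohomology from the choice of good frame, proved in \cite{rclsf1} by the usual strong fibration argument, $\pi_1$ induces an isomorphism on the $E_1$-pages of the descent spectral sequences of the two simplicial de Rham complexes, hence an isomorphism $H^i_\rig(\mathscr{Z}_\bu/\ekd,\sh{E})\cong H^i_\rig((X_\bu,Y_\bu,\mathfrak{P}_\bu)/\ekd,\sh{E})$. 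The second projection $\pi_2\colon\mathscr{Z}_\bu\to(X,Y,\mathfrak{P})$ realises $\mathscr{Z}_\bu$ as a simplicial good frame lying over the single good frame $(X,Y,\mathfrak{P})$ and covering the Zariski hypercover $X_\bu\to X$ (again $\mathfrak{P}_n\times_{\cur{V}\pow{t}}\mathfrak{P}\to\mathfrak{P}$ is smooth around $X_n$, since $\mathfrak{P}_n$ is smooth over $\cur{V}\pow{t}$ around $X_n$). Zariski cohomological descent then says that the augmentation from the total complex of $\sh{E}\otimes\Omega^*_{]Z_\bu[/S_K}$ to $\sh{E}\otimes\Omega^*_{]Y[_\mathfrak{P}/S_K}$ is a quasi-isomorphism, so that $H^i_\rig(\mathscr{Z}_\bu/\ekd,\sh{E})\cong H^i_\rig(X/\ekd,\sh{E})$; equivalently, the descent spectral sequence $E_1^{p,q}=H^q_\rig(X_p/\ekd,\sh{E}|_{X_p})$ converges to $H^{p+q}_\rig(X/\ekd,\sh{E})$. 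Composing with the isomorphism furnished by $\pi_1$ gives the comparison asserted in the lemma.

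Naturality follows from a cofinality argument: any two embedding systems, or a single good frame together with an embedding system, are dominated by a common product of the type above, over which the two comparison maps agree; and the isomorphism is manifestly functorial in $\sh{E}$. When $\sh{E}\in F\text{-}\mathrm{Isoc}^\dagger(X/\ekd)$ and the embedding system carries a Frobenius lift, Frobenius compatibility is immediate, since every map above comes from functoriality of the cohomology of $F$-isocrystals — one may take the auxiliary frame $(X,Y,\mathfrak{P})$ inside projective space so that it too carries a Frobenius lift, or else handle Frobenius exactly as in \S5 of \cite{rclsf1}. When $\sh{E}\in\mathrm{Isoc}^\dagger(X/K)$, the Gauss--Manin filtration $F^\bu$ of Section \ref{absgm} is functorial for morphisms of frames and is preserved by all the maps above (they respect the $\cur{O}\cdot dt$ sub-object), so the comparison isomorphism is horizontal. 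The step I expect to be the main obstacle is precisely the Zariski cohomological descent invoked for $\pi_2$: one must show that the natural augmentation of the simplicial $j^\dagger$-de Rham complex on $]Z_\bu[$ down to the de Rham complex on $]Y[_\mathfrak{P}$ is a quasi-isomorphism. This is not formal, but reduces — exactly as in the classical theory of Berthelot and of Chiarellotto--Le Stum — to Mayer--Vietoris for the overconvergence functors, i.e. to the short exact sequence $0\to j^\dagger_{U\cup V}\sh{F}\to j^\dagger_U\sh{F}\oplus j^\dagger_V\sh{F}\to j^\dagger_{U\cap V}\sh{F}\to0$ of sheaves on a tube, together with the identity $j^\dagger_{U\cap V}=j^\dagger_U\circ j^\dagger_V$; these exactness properties of strict neighbourhoods carry over verbatim to the $S_K$-setting of \cite{rclsf1}, and once they are available the rest of the argument is formal.
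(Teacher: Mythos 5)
Your proposal is correct and follows essentially the same route as the paper: form the product embedding system over both the given one and a fixed good frame for $X$, use frame-invariance (the fibration argument from \cite{rclsf1}, Theorem 4.5) to handle the first projection, and reduce the descent along the second projection to Mayer--Vietoris for the $j^\dagger$ functors (Lemma 4.4 of \cite{rclsf1}), with Frobenius and connection compatibility following from functoriality. The only organizational difference is that the paper first replaces the product system by the ``constant'' system $(X_\bu,\coprod Y,\coprod\mathfrak{P})$ before applying the Čech resolution, which is exactly the push-down step you correctly flag as the crux of the descent.
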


\begin{proof} Let $(X,Y,\mathfrak{P})$ be a smooth and proper frame containing $X$, we form a new embedding system for $X$ as follows. Embed each connected component of $X_n$ into $Y$ via the canonical map $X_n\rightarrow X\rightarrow Y$, so we get an smooth and proper frame $(X_n,Y'_n:=\coprod_{k_n} Y ,\mathfrak{P}'_n:=\coprod_{k_n} \mathfrak{P})$ for all $n$. These fit together to form an embedding system $(X_\bu,Y'_\bu,\mathfrak{P}'_\bu)$. Then we define $\mathfrak{P}''_n=\mathfrak{P}_n\times_{\cur{V}\pow{t}}\mathfrak{P}'_n$ and $Y''_n$ to be the closure of $X_n$ inside $\mathfrak{P}''_n$ for the diagonal embedding, so that we have a diagram of embedding systems
$$ \xymatrix{ & (X_\bu,Y''_\bu,\mathfrak{P}''_\bu) \ar[dr]^{p_2}\ar[dl]_{p_1}& \\
(X_\bu,Y_\bu,\mathfrak{P}_\bu) & & (X_\bu,Y'_\bu,\mathfrak{P}'_\bu) }
$$
where each map $\mathfrak{P}''_n\rightarrow \mathfrak{P}_n,\mathfrak{P}'_n$ is smooth around $X_n$, and each $Y''_n\rightarrow Y_n,Y'_n$ is proper. It then follows from the proof of Theorem 4.5 of \cite{rclsf1} that
\begin{align*} \mathbf{R}p_{1*} (\sh{E} \otimes \Omega^*_{]Y''_\bu[_{\mathfrak{P}''_\bu}/S_K}) &\cong \sh{E} \otimes \Omega^*_{]Y_\bu[_{\mathfrak{P}_\bu}/S_K} \\
\mathbf{R}p_{2*} (\sh{E} \otimes \Omega^*_{]Y''_\bu[_{\mathfrak{P}''_\bu}/S_K}) &\cong \sh{E} \otimes \Omega^*_{]Y'_\bu[_{\mathfrak{P}'_\bu}/S_K} 
\end{align*} 
and hence that the natural morphisms 
\begin{align*} H^i_\rig((X_\bu,Y_\bu,\mathfrak{P}_\bu)/\ekd,\sh{E}) &\rightarrow H^i_\rig((X_\bu,Y''_\bu,\mathfrak{P}''_\bu)/\ekd,\sh{E}) \\
H^i_\rig((X_\bu,Y'_\bu,\mathfrak{P}'_\bu)/\ekd,\sh{E}) &\rightarrow H^i_\rig((X_\bu,Y''_\bu,\mathfrak{P}''_\bu)/\ekd,\sh{E})
\end{align*}
are isomorphisms. Thus we may replace the embedding system $(X_\bu,Y_\bu,\mathfrak{P}_\bu)$ by the embedding system $(X_\bu,Y'_\bu,\mathfrak{P}'_\bu)$, so that we have an augmentation $(X_\bu,Y_\bu,\mathfrak{P}_\bu)\rightarrow (X,Y,\mathfrak{P})$ such that $Y_n=\coprod_{k(n)}Y$ and $\mathfrak{P}_n=\coprod_{k(n)}\mathfrak{P}$ for integers $k(n)$. The point is that now repeated application of Lemma 4.4 from \cite{rclsf1} gives a resolution
$$ j_X^\dagger E \simeq \mathrm{Tot}(j_{X_\bu}^\dagger E)
$$
for any sheaf $E$ on $]Y[_\mathfrak{P}$, where $\mathrm{Tot}$ denotes the un-normalised cochain complex associated to a cosimplicial sheaf on $]Y[_\mathfrak{P}$. Hence 
$$ H^i_\rig(]Y[_\mathfrak{P},\sh{E}\otimes \Omega_{]Y[_\mathfrak{P}/S_K} ) \cong H^i_\rig(]Y_\bu[_{\mathfrak{P}_\bu},\sh{E}\otimes \Omega_{]Y_\bu[_{\mathfrak{P}_\bu}/S_K} )
$$
as required.

The isomorphism $H^i_\rig((X_\bu,Y_\bu,\mathfrak{P}_\bu)/\ekd,\sh{E})\cong H^i_\rig(X/\ekd,\sh{E})$ is easily checked to be compatible with ground field extensions and functoriality in $X$, and hence is compatible with Frobenius when $\sh{E}\in F\text{-}\mathrm{Isoc}^\dagger(X/\ekd)$. It is also easily seen to be compatible with the connections. \end{proof}

\begin{prop} Let $(X_\bu,Y_\bu,\mathfrak{P}_\bu)$ be an embedding system for $X$, and $\sh{E}\in \mathrm{Isoc}^\dagger(X/\ekd)$. Then the cohomology $H^i_\rig((X_\bu,Y_\bu,\mathfrak{P}_\bu)/\ekd,\sh{E})$ only depends on $X$ and $\sh{E}$ up to canonical isomorphism.
\end{prop}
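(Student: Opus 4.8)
The plan is to reduce the statement to the embeddable case treated in Lemma~\ref{zhd1}, via the usual device of refining two embedding systems by their fibre product. It suffices to produce, for any two embedding systems $\mathcal{S}=(X_\bu,Y_\bu,\mathfrak{P}_\bu)$ and $\mathcal{S}'=(X'_\bu,Y'_\bu,\mathfrak{P}'_\bu)$ for $X$, a natural isomorphism
$$ \Phi_{\mathcal{S},\mathcal{S}'}\colon H^i_\rig(\mathcal{S}/\ekd,\sh{E}) \isomto H^i_\rig(\mathcal{S}'/\ekd,\sh{E}), $$
and then to check that $\Phi_{\mathcal{S},\mathcal{S}}=\mathrm{id}$ and that the cocycle relation $\Phi_{\mathcal{S},\mathcal{U}}=\Phi_{\mathcal{T},\mathcal{U}}\circ\Phi_{\mathcal{S},\mathcal{T}}$ holds, which together say that the object is well-defined up to canonical isomorphism.

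To build $\Phi_{\mathcal{S},\mathcal{S}'}$, I would form the bisimplicial data consisting of the $k\lser{t}$-schemes $X_m\times_X X'_n$, the formal $\cur{V}\pow{t}$-schemes $\mathfrak{P}''_{m,n}=\mathfrak{P}_m\times_{\cur{V}\pow{t}}\mathfrak{P}'_n$, and the closures $Y''_{m,n}$ of $X_m\times_X X'_n$ inside $Y_m\times_{k\pow{t}}Y'_n$. Since $X$ is separated, each $X_m\times_X X'_n$ is a locally closed subscheme of $Y_m\times_{k\pow{t}}Y'_n$ and hence embeddable, and the face maps of $\mathfrak{P}''_{\bu,\bu}$ in either direction are smooth around $X_m\times_X X'_n$ since smoothness over $\cur{V}\pow{t}$ is stable under the fibre products in play; these are routine verifications. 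Taking the diagonal yields an embedding system $\mathcal{S}''=(X''_\bu,Y''_\bu,\mathfrak{P}''_\bu)$ for $X$ — the diagonal of a bisimplicial Zariski hypercover of the constant object $X$ is again a Zariski hypercover — together with projection morphisms $\mathcal{S}''\to\mathcal{S}$ and $\mathcal{S}''\to\mathcal{S}'$.

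The crux is the isomorphism $H^i_\rig(\mathcal{S}/\ekd,\sh{E})\cong H^i_\rig(\mathcal{S}''/\ekd,\sh{E})$. Fix $m$. The morphism $X_m\times_X X'_\bu\to X_m$ is the base change of the Zariski hypercover $X'_\bu\to X$ along $X_m\to X$, hence is again a Zariski hypercover (base change preserves covers and commutes with coskeleta), so $(X_m\times_X X'_\bu,\,Y''_{m,\bu},\,\mathfrak{P}''_{m,\bu})$ is an embedding system for the embeddable variety $X_m$. Applying Lemma~\ref{zhd1} to $X_m$ — whose cohomological assertion needs no Frobenius lift — gives a natural isomorphism
$$ \mathbf{R}\Gamma\bigl(]Y_m[_{\mathfrak{P}_m},\,\sh{E}\otimes\Omega^*_{]Y_m[_{\mathfrak{P}_m}/S_K}\bigr) \;\cong\; \mathbf{R}\Gamma\bigl(]Y''_{m,\bu}[_{\mathfrak{P}''_{m,\bu}},\,\sh{E}\otimes\Omega^*_{]Y''_{m,\bu}[_{\mathfrak{P}''_{m,\bu}}/S_K}\bigr), $$
as both sides compute $H^*_\rig(X_m/\ekd,\sh{E}|_{X_m})$, and this is natural in $m$ for the simplicial structure maps by the functoriality clause of that lemma. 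Totalising over $m$, the left side is $H^*_\rig(\mathcal{S}/\ekd,\sh{E})$ by definition, while the right side is the cohomology of the bisimplicial tube $]Y''_{\bu,\bu}[_{\mathfrak{P}''_{\bu,\bu}}$, which by the Eilenberg--Zilber theorem equals the cohomology of its diagonal, namely $H^*_\rig(\mathcal{S}''/\ekd,\sh{E})$. Running the same argument with the roles of the two simplicial indices exchanged gives $H^i_\rig(\mathcal{S}'/\ekd,\sh{E})\cong H^i_\rig(\mathcal{S}''/\ekd,\sh{E})$, and $\Phi_{\mathcal{S},\mathcal{S}'}$ is the composite.

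It remains to verify the coherence conditions, which is formal. For $\Phi_{\mathcal{S},\mathcal{S}}=\mathrm{id}$ one uses that the diagonal $\mathcal{S}\to\mathcal{S}\times_X\mathcal{S}$ is a common section of the two projections, forcing the two pullbacks $H^i_\rig(\mathcal{S})\to H^i_\rig(\mathcal{S}\times_X\mathcal{S})$ to agree; the cocycle relation follows by introducing the triple fibre product $\mathcal{S}\times_X\mathcal{T}\times_X\mathcal{U}$ and chasing its diagram of projections. Keeping track of Frobenius lifts or of the Gauss--Manin connection exactly as in Lemma~\ref{zhd1} shows the isomorphisms respect those structures when present. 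I expect the main point needing care to be the passage from the level-wise application of Lemma~\ref{zhd1} to the totalised statement: one must ensure that the Eilenberg--Zilber identification of the double complex with its diagonal is compatible, on the nose, with the definition of $H^i_\rig(\mathcal{S}''/\ekd,\sh{E})$, and that all the isomorphisms involved respect the face and degeneracy maps so that they glue into a morphism of simplicial (resp.\ bisimplicial) objects; the checks that the fibre-product triples are honest smooth and proper embedding systems are routine but should be recorded.
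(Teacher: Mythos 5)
Your proposal is correct and follows essentially the same route as the paper: form the bisimplicial fibre product of the two embedding systems, observe that for each fixed index it gives an embedding system for the embeddable variety $X_m$ so that (the proof of) Lemma \ref{zhd1} applies levelwise, and totalise to compare both systems with the bisimplicial object. The only cosmetic difference is that you pass through the diagonal via Eilenberg--Zilber, whereas the paper works directly with the cohomology of the bisimplicial tube via $\mathbf{R}\pi_*$; your explicit treatment of the identity and cocycle coherences spells out what the paper leaves as ``easily checked''.
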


\begin{proof} Suppose that $(X'_\bu,Y'_\bu,\mathfrak{P}'_\bu)\rightarrow (X_\bu,Y_\bu,\mathfrak{P}_\bu)$ are embedding systems for $X$. We define $X_{n,m}=X_n\times_X X'_m$ and $\mathfrak{P}_{n,m}=\mathfrak{P}_n\times_{\cur{V}\pow{t}}\mathfrak{P}'_m$, so there is a natural embedding $X_{n,m}\rightarrow \mathfrak{P}_{n,m}$. Let $Y_{n,m}$ denote the closure of $X_{n,m}$, so that $(X_{\bu,\bu},Y_{\bu,\bu},\mathfrak{P}_{\bu,\bu})$ forms a double simplicial object is the category of smooth and proper frames over $\cur{V}\pow{t}$. Then for each fixed $n$, $(X_{n,\bu},Y_{n,\bu},\mathfrak{P}_{n,\bu})$ is an embedding system for the embeddable variety $X_n$, and if fact we get a whole augmented simplicial triple $(X_{n,\bu},Y_{n,\bu},\mathfrak{P}_{n,\bu})\rightarrow (X_n,Y_n,\mathfrak{P}_n)$. Examining the proof of Lemma \ref{zhd1} tells us that
$$ \mathbf{R}\pi_{n*}(\sh{E} \otimes \Omega^*_{ ]Y_{n,\bu}[_{\mathfrak{P}_{n,\bu}}/S_K} ) \cong \sh{E} \otimes \Omega^*_{ ]Y_{n}[_{\mathfrak{P}_{n}}/S_K} 
$$
where $\pi_n: ]Y_{n,\bu}[_{\mathfrak{P}_{n,\bu}}\rightarrow ]Y_n[_{\mathfrak{P}_n}$ is the natural morphism. Hence if we let $\pi:]Y_{\bu,\bu}[_{\mathfrak{P}_{\bu,\bu}}\rightarrow ]Y_\bu[_{\mathfrak{P}_\bu}$ denote the natural morphism, we get an isomorphism 
$$ \mathbf{R}\pi_{*}(\sh{E} \otimes \Omega^*_{ ]Y_{\bu,\bu}[_{\mathfrak{P}_{\bu,\bu}}/S_K} ) \cong \sh{E} \otimes \Omega^*_{ ]Y_{\bu}[_{\mathfrak{P}_{\bu}}/S_K} 
$$
and hence the natural morphism

$$ H^i(]Y_{\bu}[_{\mathfrak{P}_{\bu}},\sh{E} \otimes \Omega^*_{ ]Y_{\bu}[_{\mathfrak{P}_{\bu}}/S_K})\rightarrow H^i(]Y_{\bu,\bu}[_{\mathfrak{P}_{\bu,\bu}},\sh{E} \otimes \Omega^*_{ ]Y_{\bu,\bu}[_{\mathfrak{P}_{\bu,\bu}}/S_K})$$
is an isomorphism. Of course, the same is true replacing $(X_\bu,Y_\bu,\mathfrak{P}_\bu)$ by $(X'_\bu,Y'_\bu,\mathfrak{P}'_\bu)$ and so we get a canonical isomorphism 
$$ H^i(]Y_{\bu}[_{\mathfrak{P}_{\bu}},\sh{E} \otimes \Omega^*_{ ]Y_{\bu}[_{\mathfrak{P}_{\bu}}/S_K}) \rightarrow H^i(]Y'_{\bu}[_{\mathfrak{P}'_{\bu}},\sh{E} \otimes \Omega^*_{ ]Y'_{\bu}[_{\mathfrak{P}'_{\bu}}/S_K})
$$
by composing this isomorphism with the inverse of 
$$ H^i(]Y'_{\bu}[_{\mathfrak{P}'_{\bu}},\sh{E} \otimes \Omega^*_{ ]Y'_{\bu}[_{\mathfrak{P}'_{\bu}}/S_K})\rightarrow H^i(]Y_{\bu,\bu}[_{\mathfrak{P}_{\bu,\bu}},\sh{E} \otimes \Omega^*_{ ]Y_{\bu,\bu}[_{\mathfrak{P}_{\bu,\bu}}/S_K}).$$
These are easily checked to compatible with the isomorphisms to the cohomology of any third embedding system $(X''_\bu,Y''_\bu,\mathfrak{P}''_\bu)$.
\end{proof}

Hence we get well-defined and functorial cohomology groups $H^i_\rig(X/\ekd,\sh{E})$ which admit semi-linear Frobenii when $\sh{E}\in F\text{-}\mathrm{Isoc}^\dagger(X/\ekd)$, and connections when $\sh{E}\in \mathrm{Isoc}^\dagger(X/K)$. These structures are compatible when $\sh{E}\in F\text{-}\mathrm{Isoc}^\dagger(X/K)$. Of course, entirely similar considerations apply to constructing $\ekd$-valued rigid cohomology with compact supports for non-embeddable varieties. The only new ingredient needed is (a repeated application of) the following analogue of Lemma 4.4 from \cite{rclsf1}.

\begin{lem} Let $(X,Y,\mathfrak{P}$) be a $\cur{V}\pow{t}$-frame, and $X=U_1\cup U_2$ an open cover of $X$. Then for any sheaf $E$ on $]Y[_\mathfrak{P}$ there is an exact triangle
$$  \mathbf{R}\Gamma_{]U_1\cap U_2[_\frak{P}}E \rightarrow  \mathbf{R}\Gamma_{]U_1[_\frak{P}}E \oplus  \mathbf{R}\Gamma_{]U_2[_\frak{P}}E \rightarrow  \mathbf{R}\Gamma_{]X[_\frak{P}}E \overset{+1}{\rightarrow}
$$
of complexes of sheaves on $]Y[_\mathfrak{P}$.

\end{lem}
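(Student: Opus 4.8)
The plan is to obtain the triangle as an instance of the Mayer--Vietoris triangle for derived sections with support in a closed analytic subspace. Recall first that for an open subscheme $U\subseteq Y$ with closed complement $Z=Y\setminus U$, the tube $]Z[_\frak{P}$ is an admissible open of $]Y[_\frak{P}$, the tube $]U[_\frak{P}$ is its closed complement, and, writing $j_{]Z[_\frak{P}}\colon ]Z[_\frak{P}\hookrightarrow ]Y[_\frak{P}$ for the open immersion, $\mathbf{R}\Gamma_{]U[_\frak{P}}E$ sits in a distinguished triangle $\mathbf{R}\Gamma_{]U[_\frak{P}}E\to E\to \mathbf{R}(j_{]Z[_\frak{P}})_*(E|_{]Z[_\frak{P}})\xrightarrow{+1}$, i.e.\ it is the fibre of the restriction map. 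So I would first set $Z_i:=Y\setminus U_i$ and $O_i:=\,]Z_i[_\frak{P}$, and use the compatibility of tube formation with finite intersections and unions of subschemes of $Y$ (exactly as in \cite{rclsf1}, following Berthelot): this gives $O_1\cap O_2=\,]Y\setminus X[_\frak{P}$ and $O_1\cup O_2=\,]Y\setminus(U_1\cap U_2)[_\frak{P}$, and, taking complements in $]Y[_\frak{P}$, the identities $]U_1[_\frak{P}\cap\,]U_2[_\frak{P}=\,]U_1\cap U_2[_\frak{P}$ and $]U_1[_\frak{P}\cup\,]U_2[_\frak{P}=\,]X[_\frak{P}$ of closed subspaces.

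Next I would invoke the ordinary Mayer--Vietoris distinguished triangle attached to the covering of the admissible open $O_1\cup O_2$ by $O_1$ and $O_2$, pushed forward along the various open immersions into $]Y[_\frak{P}$ (writing $E$ for the evident restriction in each term):
$$ \mathbf{R}(j_{O_1\cup O_2})_*E\ \longrightarrow\ \mathbf{R}(j_{O_1})_*E\oplus \mathbf{R}(j_{O_2})_*E\ \longrightarrow\ \mathbf{R}(j_{O_1\cap O_2})_*E\ \xrightarrow{\ +1\ } . $$
One then compares this with the split triangle $E\xrightarrow{(1,1)}E\oplus E\xrightarrow{(1,-1)}E\xrightarrow{\ 0\ }E[1]$ via the three restriction (unit-of-adjunction) maps $E\to \mathbf{R}(j_W)_*(E|_W)$; the squares commute because restriction is compatible with the inclusions $O_1\cap O_2\subseteq O_i\subseteq O_1\cup O_2$, which is the usual \v{C}ech bookkeeping. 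Passing to the fibres of the three vertical arrows produces a distinguished triangle, and by the triangle recalled in the first paragraph together with the tube identities its terms are precisely $\mathbf{R}\Gamma_{]U_1\cap U_2[_\frak{P}}E$, then $\mathbf{R}\Gamma_{]U_1[_\frak{P}}E\oplus\mathbf{R}\Gamma_{]U_2[_\frak{P}}E$, then $\mathbf{R}\Gamma_{]X[_\frak{P}}E$, with the natural maps. This is exactly the asserted exact triangle.

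The step most likely to require care is not really an obstacle but a check: that all the tube-theoretic inputs hold in the pseudo-finite-type, Fujiwara--Kato setting — namely that $]Z[_\frak{P}$ is admissible open with $]U[_\frak{P}$ its closed complement for $U$ open in $Y$, that tube formation commutes with finite $\cap$ and $\cup$, and that the Mayer--Vietoris triangle for $\mathbf{R}(j_W)_*$ along admissible opens is available; all of these are already established in \cite{rclsf1,rclsf2}, following Berthelot. The only other point is the homological fact that the fibre of a morphism of distinguished triangles is again distinguished when the middle square is homotopy-cartesian — the standard $3\times 3$ lemma. I should also flag that the functor $\mathbf{R}\Gamma_{]\,\cdot\,[_\frak{P}}$ meant here is the \emph{naive} derived-sections-with-support functor for a closed analytic subspace of the fixed ambient rigid space $]Y[_\frak{P}$ — exactly the one entering the construction of $H^*_{c,\rig}$ on a strict neighbourhood — so that overconvergence plays no role. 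Finally, the application wants this triangle iterated over the members of a finite open cover, which follows by the same argument (or by induction), again functorially in $E$ and in the frame $(X,Y,\frak{P})$.
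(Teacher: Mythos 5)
Your proposal is correct and follows essentially the same route as the paper: both compare the Mayer--Vietoris triangle for the open tubes of the complements $]Y\setminus U_i[_\frak{P}$ (using that tubes commute with finite unions and intersections) with the split triangle $E\to E\oplus E\to E$ and pass to the fibres of the restriction maps, which are by definition the $\mathbf{R}\Gamma_{]\cdot[_\frak{P}}$ terms. The only cosmetic difference is that the paper runs the argument for an arbitrary sheaf $F$ and then specialises to $F=k_*k^{-1}E$, where $k:]Y\otimes_{k\pow{t}}k\lser{t}[_\frak{P}\hookrightarrow\, ]Y[_\frak{P}$, to match the precise definition of the supports functor in this setting --- exactly the point you flag at the end.
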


\begin{proof} Choose complements $Z$ and $H_j$ for $X$ and $U_j$ in $Y$ respectively, so that $H_1\cup H_2$ is a complement for $U_1 \cap U_2$. Let 
$$ i: ]Z[_\frak{P} \rightarrow ]Y[_\frak{P},\;\;i_j:]H_j[_\frak{P} \rightarrow ]Y[_\frak{P},\;\;i_{12}:]H_1 \cup H_2[_\frak{P} \rightarrow ]Y[_\frak{P}
$$ 
denote the corresponding open immersions. Note that for any sheaf $F$ on $]Y[_\frak{P}$ we have a diagram
$$ \xymatrix{  F \ar[r]\ar[d] & F \oplus F \ar[r] \ar[d] & F \ar[d] \ar[r]^-{+1} &  \\
\mathbf{R}i_{12*}i_{12}^{-1} F \ar[r] & \mathbf{R}i_{1*}i_{1}^{-1} F\oplus \mathbf{R}i_{2*}i_{2}^{-1} F \ar[r] & \mathbf{R}i_{*}i^{-1} F \ar[r]^-{+1} & \\
}
$$
in which both rows are exact (since $]H_1\cup H_2[_\frak{P}= ]H_1[_\frak{P} \cup ] H_2[_\frak{P}$ and $]H_1\cap H_2[_\frak{P}=]Z[_\frak{P}$). Hence we get an exact triangle
$$ \left(  F \rightarrow \mathbf{R}i_{12*}i_{12}^{-1}F\right) [1]  \rightarrow \left( F \rightarrow  \mathbf{R}i_{1*}i_{1}^{-1} F\right) [1]  \oplus \left( F \rightarrow \mathbf{R}i_{2*}i_{2}^{-1}   F\right) [1]  \rightarrow \left(  F \rightarrow \mathbf{R}i_{*}i^{-1} F\right) [1]  \overset{+1}{\rightarrow}.
$$
Now we let $Y'=Y\otimes_{k\pow{t}}k\lser{t}$ and denote by $k:]Y'[_\frak{P}\rightarrow ]Y[_\frak{P}$ the inclusion, applying this to $F=k_*k^{-1}E$ gives the result. \end{proof}

\bibliographystyle{amsplain}\addcontentsline{toc}{section}{References}
\bibliography{/Users/cdl10/Documents/Dropbox/Maths/lib.bib}

\end{document}